\newtheorem{theorem}{Theorem}[section]
\newtheorem{thm}[theorem]{Theorem}
\newtheorem{lemma}[theorem]{Lemma}
\newtheorem{proposition}[theorem]{Proposition}
\newtheorem{definition}[theorem]{Definition}
\newtheorem{remark}[theorem]{Remark}
\numberwithin{equation}{section}
\def\q{\quad}
\def\qq{{\qquad}}
\def\pd{\partial}
\def\wt{\widetilde}
\def\wh{\widehat}
\def\eps{\varepsilon}
\def\epsilon{\eps}
\def\al{\alpha}
\def\phi{\varphi}
\def\lam{{\lambda}}
\def\om{{\omega}}
\def\ol{\overline}
\def\fract{\textstyle \frac}
\def\Gam{\Gamma}\def\gam{\gamma}
\def\th{\theta}
\def\proof{{\medskip\noindent {\bf Proof. }}}
\def\qed{{\hfill $\square$ \bigskip}}
\def\dist{{\mathop {{\rm dist\, }}}}
\def\sD {{\cal D}} \def\sE {{\cal E}} \def\sF {{\cal F}}
  \def\sU {{\cal U}}
 \def\bE {{\mathbb E}}
\def\bP {{\mathbb P}}  \def\bR {{\mathbb R}}
 \def\bZ {{\mathbb Z}}
\def\nn{\nonumber}
\def\half{{\textstyle \frac12}}
\def\ignore#1{}  
\def\ms{\medskip}
\def\sms{\smallskip}
\def\sm{\smallskip\noindent}
\def\hS{\widehat S}
\def\Reff{R_{\rm eff}}
\def\wp{\wt p}
\newenvironment{shortitemize}{
\begin{enumerate}
  \setlength{\itemsep}{1pt}
  \setlength{\parskip}{0pt}
  \setlength{\parsep}{0pt}
}{\end{enumerate}}
\def\SLE{\operatorname{SLE}}
\newcommand{\Pro}[2]{\bP^{#1}\left( #2 \right)}
\newcommand{\Exp}[2]{\bE^{#1} \left[ #2 \right]}
\newcommand{\condPro}[3]{\bP^{#1} \left\{ #2 \hskip5pt \vline \hskip5pt #3 \right\}}
\newcommand{\abs}[1]{\left| #1 \right|}
\def\Lo{\operatorname{L}}
\newcommand{\ind}{\mathbf{1}}
\begin{document}

\title{\bf  Spectral dimension and random walks on the two dimensional uniform spanning tree }

\author{Martin T. Barlow\footnote{Research partially supported by NSERC (Canada) and by the
Peter Wall Institute of Advanced Studies (UBC)}  
{ }and Robert Masson\footnote{Research partially supported by NSERC (Canada)} }

\maketitle


\begin{abstract}
  We study simple random walk on the uniform spanning tree on
  $\mathbb{Z}^2$. We obtain estimates for the transition probabilities
  of the random walk, the distance of the walk from its starting point
  after $n$ steps, and exit times of both Euclidean balls and balls in
  the intrinsic graph metric. In particular, we prove that the
  spectral dimension of the uniform spanning tree on $\mathbb{Z}^2$ is
  $16/13$ almost surely.

\vskip.2cm \noindent {\it Keywords:} 
Uniform spanning tree, loop erased random walk, random walk on a random graph

\vskip.2cm
\noindent {\it Subject Classification:  }
60G50,  60J10

\end{abstract}
\section{Introduction} \label{sec:intro}

A {\em spanning tree} on a finite graph $G=(V,E)$ is a connected subgraph 
of $G$ which is a tree and has vertex set $V$. A {\em uniform spanning tree}
in $G$ is a random spanning tree chosen uniformly from the set of all
spanning trees. Let $Q_n = [-n, n]^d \subset \bZ^d$, and write $\sU_{Q_n}$ for a
uniform spanning tree on $Q_n$.  Pemantle \cite {Pem} showed that
the weak limit of $\sU_{Q_n}$ exists and is connected if and only if $d \leq 4$. 
(He also showed that the limit does not depend on the 
particular sequence of sets $Q_n$ chosen, and that
`free' or `wired' boundary conditions give rise to the same limit.) 
We will be interested in the case $d = 2$, and will call the limit the uniform 
spanning tree (UST) on $\bZ^2$ and denote it by $\sU$.
For further information on USTs, see for example \cite{BLPS, BKPS, Lyo}. 
The UST can also be obtained as a limit as $p, q \to 0$ of the 
random cluster model -- see \cite{Hag}.

A loop erased random walk (LERW) on a graph is a process obtained by
chronologically erasing the loops of a random walk on the graph. 
There is a close connection between the UST and the LERW. 
Pemantle \cite{Pem} showed that the unique path between any two vertices $v$ and $w$ in a UST 
on a finite graph $G$ has the same distribution as the loop-erasure of a simple random walk on $G$ from $v$ to $w$. 
Wilson \cite{Wil96} then proved that a UST could be generated by a sequence of LERWs by the 
following algorithm. Pick an arbitrary vertex $v \in G$ and let $T_0 = \{v\}$. Now suppose that we have
generated the tree $T_k$ and that $T_k$ does not span. Pick any point $w \in G \setminus T_k$ 
and let $T_{k+1}$ be the union of $T_k$ and the loop-erasure of a random walk started at $w$ and
 run until it hits $T_k$. We continue this process until we generate a spanning tree
 $T_m$. Then $T_m$ has the distribution of the UST on $G$. 

We now fix our attention on $\bZ^2$. By letting the root $v$ in Wilson's algorithm go to infinity, one sees that one 
can obtain the UST $\sU$ on $\bZ^2$ by first running an infinite
LERW from a point $x_0$ (see Section \ref{sect:LERW} for the precise definition) to create the first path in $\sU$, and then 
using Wilson's algorithm to generate the rest of $\sU$. This construction makes
it clear that $\sU$ is a 1-sided tree: from each point $x$ there is a unique 
infinite (self-avoiding) path in $\sU$. 

Both the LERW and the UST on $\bZ^2$ have conformally invariant scaling limits. 
Lawler, Schramm and Werner \cite{LSW1} proved that the LERW in simply connected domains 
scales to $\SLE_2$ -- Schramm-Loewner evolution with parameter $2$. Using the relation between LERW and 
UST, this implies that the UST has a conformally invariant scaling limit in the sense of \cite{Sch} where 
the UST is regarded as a measure on the set of triples $(a,b,\gamma)$ where $a,b \in \bR^2 \cup \{\infty\}$ 
and $\gamma$ is a path between $a$ and $b$. In addition \cite{LSW1} 
proves that the UST Peano curve -- the interface between the UST and the 
dual UST -- has a conformally invariant scaling limit, which is $\SLE_8$. 

In this paper we will study properties of the UST $\sU$ on $\bZ^2$.
We have two natural metrics on $\sU$; the intrinsic metric given by the
shortest path in $\sU$ between two points, and the Euclidean metric. 
For $x,y \in \bZ^2$ let $\gam(x,y)$ be the unique path in $\sU$ between
$x$ and $y$, and let $d(x,y)=|\gam(x,y)|$ be its length. If $U_0$ is a connected
subset of $\sU$ then we write $\gam(x, U_0)$ for the unique
path from $x$ to $U_0$.
Write $\gam(x,\infty)$ for the path from $x$ to infinity.
We define balls in the intrinsic metric by
$$ B_d(x,r) =\{ y: d(x,y)\le r \}$$
and let $|B_d(x,r)|$ be the number of points in $B_d(x,r)$ (the {\em volume} of $B_d(x,r)$).
We write 
$$ B(x,r) =\{ y \in \bZ^d: |x-y| \le r\}, $$
for balls in the Euclidean metric, and let $B_R = B(R)= B(0,R)$, $B_d(R)=B_d(0,R)$. 

Our goals in this paper are to study the volume of balls in the $d$ metric, to obtain 
estimates of the degree of `metric distortion' between the intrinsic and
Euclidean metrics, and to study the behaviour of simple random walk (SRW) on $\sU$. 

To state our results we need some further notation.
Let $G(n)$ be the expected number of steps of an infinite 
LERW started at 0 until it leaves $B(0,n)$. Clearly $G(n)$ is strictly
increasing; extend $G$ to a continuous strictly increasing function from $[1,\infty)$
to $[1,\infty)$, with $G(1)=1$.
Let $g(t)$ be the inverse of $G$, so that $G(g(t))=t=g(G(t))$ for all $t\in [1,\infty)$.
By \cite{Ken, Mas09} we have
\begin{equation} \label{eq:growthexp}
 \lim_{n \to \infty} \frac{ \log G(n)} {\log n} = \frac54. 
\end{equation}

Our first result is on the relation between balls in the two metrics.

\begin{theorem} \label{t:main1}
(a) There exist constants $c, C > 0$ such that for all $r \ge 1$, $\lam \ge 1$, 
\begin{equation}\label{ei:Bdtail}
 \bP \big(  B_d(0, \lam^{-1} G(r) ) \not\subset B(0, r ) \big)  \le C e^{-c \lam^{2/3}}. 
 \end{equation} 
(b) For all $\eps > 0$, there exist $c(\epsilon), C(\epsilon) > 0$ and $\lambda_0(\eps) \geq 1$ 
such that for all $r \ge 1$ and $\lambda \geq 1$, 
\begin{equation}\label{ei:Bd-lb}
\bP \big( B(0, r)  \not\subset B_d(0, \lam G(r) \big) \leq C \lambda^{-4/15 + \eps},
\end{equation}
and for all $r \geq 1$ and all $\lambda \geq \lambda_0(\eps)$, 
\begin{equation} \label{ei:Bd-lb2} 
\bP \big( B(0, r)  \not\subset B_d(0, \lam G(r) \big) \geq c \lambda^{-4/5 - \eps}.
\end{equation} 
\end{theorem}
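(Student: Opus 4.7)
The proof comprises three estimates, each based on Wilson's algorithm (representing $\gam(0, y)$ as a loop-erased random walk) combined with known tail bounds for LERW lengths and exit times.

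\textbf{Part (a).} Fix $s = \lam^{-1} G(r)$. On $\{B_d(0, s) \not\subset B(0, r)\}$ some $y \notin B(0, r)$ has $d(0, y) \le s$; writing $z(y) \in \hat\gam := \gam(0, \infty)$ for the vertex where $\gam(0, y)$ parts from $\hat\gam$, I split on whether $z(y)$ lies in $B(0, r)$. In the case $z(y) \notin B(0, r)$, $\hat\gam$ itself has already left $B(0, r)$ by step $\le s$, so its exit time satisfies $T_r \le s$, and a lower-tail fluctuation bound of the form $\bP(T_r \le \lam^{-1} G(r)) \le C e^{-c \lam^{2/3}}$ for LERW exit times handles it. In the case $z(y) \in B(0, r)$, a finite branch of $\sU$ hanging off $\hat\gam$ inside $B(0, r)$ has length $\le s$ and crosses $\partial B(0, r)$; conditioning on $\hat\gam$ and using Wilson's algorithm to generate the remainder of $\sU$, each such branch is the loop-erasure of an SRW until it hits $\hat\gam$, and a uniform LERW lower-tail bound for that branch's escape from $B(0, r)$ gives the same $e^{-c \lam^{2/3}}$ decay.

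\textbf{Part (b), lower bound.} If $T_r > \lam G(r)$, then for any integer $k$ with $\lam G(r) < k < T_r$ the vertex $\hat\gam_k$ lies in $B(0, r)$ and has $d(0, \hat\gam_k) = k > \lam G(r)$, witnessing $B(0, r) \not\subset B_d(0, \lam G(r))$. Hence
\[
\bP\bigl( B(0, r) \not\subset B_d(0, \lam G(r)) \bigr) \ge \bP(T_r > \lam G(r)),
\]
and a standard polynomial upper-tail estimate $\bP(T_r > \lam G(r)) \ge c \lam^{-4/5 - \eps}$ (matching the growth exponent $5/4$ from \eqref{eq:growthexp}) closes this direction.

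\textbf{Part (b), upper bound.} Decompose $d(0, y) = \tau(z(y)) + L(y)$, where $\tau(z)$ is the $\hat\gam$-index of $z$ and $L(y) = |\gam(y, \hat\gam)|$ is the branch length. On $\{B(0, r) \not\subset B_d(0, \lam G(r))\}$ some $y \in B(0, r)$ has $\tau(z(y)) + L(y) > \lam G(r)$, so for a threshold $M \in (0, \lam G(r))$ at least one of the following holds: (i) some $y \in B(0, r)$ has $\tau(z(y)) > M$, controlled by a polynomial upper-tail LERW estimate for $\hat\gam$ still carrying descendants in $B(0, r)$ past index $M$; or (ii) some $y \in B(0, r)$ has $L(y) > \lam G(r) - M$ with $\tau(z(y)) \le M$, controlled (conditional on $\hat\gam$ via Wilson) by a polynomial LERW length-tail bound for an individual branch, union-bounded over the at most $M$ candidate base vertices on $\hat\gam$. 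Optimizing the cutoff $M$ in these two polynomial bounds produces the exponent $4/15 - \eps$.

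The main obstacle is precisely this upper bound in (b): the naive first-moment estimate $\sum_{y \in B(0, r)} \bP(d(0, y) > \lam G(r))$ incurs a factor of order $r^2$ that cannot be absorbed, so one must instead exploit the structure of $\sU$ as branches attached to $\hat\gam$ and balance two competing polynomial tails. The gap between $4/15$ in the upper bound and $4/5$ in the lower bound reflects the cost of controlling the branch-length tail in addition to the exit-time tail.
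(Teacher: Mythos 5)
Your decomposition into $\hat\gam=\gam(0,\infty)$ plus branches is a natural first thought, but it runs into two genuine obstacles that the paper's proof is specifically designed to circumvent.

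\textbf{Part (b), lower bound.} Your proposed route is $\bP\bigl(B(0,r)\not\subset B_d(0,\lam G(r))\bigr)\ge\bP(T_r>\lam G(r))$, and you then assert a polynomial lower bound $\bP(T_r>\lam G(r))\ge c\lam^{-4/5-\eps}$. There is no reason to expect such a bound, and every reason to expect the opposite: the exit time $T_r=\wh M_r$ of the infinite LERW has an \emph{exponential} upper-tail bound $\bP(\wh M_r>\lam G(r))\le Ce^{-c\lam}$ (Theorem \ref{t:expLERW}(iii)), and the matching lower bound should also be exponential (the standard heuristic: to stay in $B(0,r)$ for $\lam$ lifetimes costs roughly $e^{-c\lam}$). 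So the event you restrict to is too small. The paper instead fixes a \emph{single} point $w$ with $|w|=r$ and shows $\bP(d(0,w)>\lam G(r))\ge c\lam^{-4/5-\eps}$ (Proposition \ref{p:ProMw<}). The polynomial scaling in $\lam$ there does not come from an exit-time tail; it comes from the combinatorial fact (Lemma \ref{l:Losubset}, the $8k$-points-on-a-square argument from \cite{BLPS}) that $\gam(0,w)$ escapes $B_{k|w|}$ with probability at least $1/(8k)$, after which the lower-tail bound on $\wh M$ converts the Euclidean escape into a length lower bound. Without something like this combinatorial input, a polynomial lower bound is out of reach.

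\textbf{Parts (a) and (b), upper bounds.} Both of your arguments require a union bound over a set of ``candidate'' objects whose size grows with $r$: in (a) over the branches of $\sU$ hanging off $\hat\gam$ inside $B(0,r)$ that might escape cheaply (equivalently, over $O(r)$ boundary points of $B(0,r)$), and in (b) over the $O(M)\approx G(r)\approx r^{5/4}$ candidate base vertices on $\hat\gam[0,M]$. In each case the per-object probability is at best $e^{-c\lam^{2/3}}$ or $\lam^{-p}$, so the union bound produces a factor growing polynomially in $r$ that cannot be absorbed; the target bounds must be uniform in $r$. The paper handles this with a genuinely different device: it runs Wilson's algorithm from a net $D_1$ (respectively $E$) of size $O(\lam^2)$ (resp.\ $O(\lam^{\eps/2})$), \emph{independent of $r$}, bounds the relevant LERW-length event for each net point, and then controls the remaining points in $B(0,r)$ not by union-bounding over them but by a multi-scale filling argument (the shells $D_k$ in Theorem \ref{t:Vub}, or Proposition \ref{p:fillin} in Theorem \ref{t:Vlb}) in which the branches started at finer scales are shown to hit the previously built tree quickly via Beurling-type hitting estimates. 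That nets-plus-fill-in structure is what makes the final bound $r$-free, and it is the piece missing from your proposal. You have correctly identified \emph{that} the naive $\sum_{y\in B(0,r)}$ bound fails, but the replacement you sketch (conditioning on $\hat\gam$ and union-bounding over branch base points) has the same fatal $r$-dependence.
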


We do not expect any of these bounds to be optimal. In fact, we could improve the exponent 
in the bound \eqref{ei:Bdtail}, but to simplify our proofs we have not tried to find the best exponent 
that our arguments yield when we have exponential bounds. 
However, we will usually attempt to find the best exponent given by our arguments when we have polynomial bounds, 
as in \eqref{ei:Bd-lb} and \eqref{ei:Bd-lb2}. 

The reason we have a polynomial lower bound in \eqref{ei:Bd-lb2} 
is that if we have a point $w$ such that $\abs{w} = r$, then the probability that $\gamma(0,w)$ leaves 
the ball $B(0, \lambda r)$ is bounded below by $\lambda^{-1}$ (see Lemma \ref{l:Losubset}). 
This in turn implies that the probability that $w \notin B_d(0,\lam G(r))$ is bounded from 
below by $c\lambda^{-4/5-\eps}$ (Proposition \ref{p:ProMw<}).

Theorem \ref{t:main1} leads immediately to bounds on the tails
of $|B_d(0,R)|$. However,  while \eqref{ei:Bdtail} gives a good bound on the
upper tail, \eqref{ei:Bd-lb} only gives polynomial control on the
lower tail. By working harder (see Theorem \ref{t:Vexplb}) we can 
obtain the following stronger bound.

\begin{theorem} \label{t:main2}
Let $R\ge 1$, $\lam \ge 1$. Then 
\begin{align} \label{e:vlubpii}
 \Pro{}{ |B_d(0,R)| \ge \lam g(R)^2 } &\le C e^{ - c \lam^{1/3}}, \\
 \label{e:vlubpi}
 \Pro{}{ |B_d(0,R)| \le \lam^{-1} g(R)^2 } &\le C e^{-c \lam^{1/9}}. 
\end{align}
So in particular there exists $C$ such that for all $R \geq 1$,
\begin{equation}\label{e:Vdasymp2}
C^{-1} g(R)^2 \leq \Exp{}{ |B_d(0,R)| } \leq C g(R)^2. 
\end{equation}
\end{theorem}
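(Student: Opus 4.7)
The plan is to deduce the upper tail \eqref{e:vlubpii} quickly from Theorem \ref{t:main1}(a), to take the lower tail \eqref{e:vlubpi} as a consequence of the sharper estimate of Theorem \ref{t:Vexplb} proved later in the paper, and then to read \eqref{e:Vdasymp2} off from the two tail bounds.

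For \eqref{e:vlubpii}, set $r=g(R)$, so that $G(r)=R$ and $g(R)^2=r^2$. Since a Euclidean ball of radius $\rho$ in $\bZ^2$ contains at most $C\rho^2$ vertices, the inclusion $B_d(0,R)\subset B(0,Kr)$ implies $|B_d(0,R)|\le CK^2 g(R)^2$; choosing $K=c\sqrt{\lam}$ with $c$ small gives the containment $\{|B_d(0,R)|\ge \lam g(R)^2\}\subset \{B_d(0,R)\not\subset B(0,Kr)\}$. I would then apply Theorem \ref{t:main1}(a) at Euclidean radius $s=Kr$ with scaling parameter $\mu=G(Kr)/R$, so that $\mu^{-1}G(s)=R$. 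The growth estimate \eqref{eq:growthexp} gives $\mu=G(Kr)/G(r)\ge cK^{5/4-\eps}$, hence $\mu^{2/3}\ge cK^{5/6-\eps}\ge c\lam^{5/12-\eps'}$; since $5/12>1/3$, taking $\eps'$ small yields the claimed bound $Ce^{-c\lam^{1/3}}$.

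For \eqref{e:vlubpi}, a direct translation of Theorem \ref{t:main1}(b) would deliver only polynomial decay in $\lam$, which is far too weak. The stretched-exponential estimate instead comes from Theorem \ref{t:Vexplb}, which I take here as a black box and whose proof is the main new work of this section. Heuristically, \eqref{ei:Bd-lb} only records the cost of a single Euclidean-far point escaping $B_d(0,R)$, whereas controlling the entire volume $|B_d(0,R)|$ requires combining polynomial failures over many approximately independent sub-events attached to disjoint segments of the infinite spine $\gam(0,\infty)$ produced by Wilson's algorithm.

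Given the two tails, \eqref{e:Vdasymp2} is immediate. For the upper bound I would write
\[
\bE|B_d(0,R)| = g(R)^2\int_0^\infty \bP\!\left( |B_d(0,R)|>\lam g(R)^2 \right) d\lam \le g(R)^2\left( 1 + \int_1^\infty Ce^{-c\lam^{1/3}}\,d\lam \right) \le Cg(R)^2.
\]
For the lower bound I would pick $\lam_0$ large enough that \eqref{e:vlubpi} evaluated at $\lam=\lam_0$ forces $\bP\!\left(|B_d(0,R)|\le \lam_0^{-1}g(R)^2\right)\le\half$, so that $\bE|B_d(0,R)|\ge \half\lam_0^{-1}g(R)^2$. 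The only real obstacle is therefore \eqref{e:vlubpi}; once Theorem \ref{t:Vexplb} is in place, all the remaining pieces of Theorem \ref{t:main2} are short.
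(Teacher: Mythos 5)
Your proposal is correct and follows the same route as the paper: the upper tail \eqref{e:vlubpii} is deduced from Theorem \ref{t:main1}(a) (equivalently Theorem \ref{t:Vub}(a)) by the scaling argument you spell out, the lower tail \eqref{e:vlubpi} is exactly the content of Theorem \ref{t:Vexplb}, and \eqref{e:Vdasymp2} is read off from the two tails. The only tiny quibble is that the ratio bound $G(Kr)/G(r)\ge cK^{5/4-\eps}$ is really Lemma \ref{l:vgrwth} rather than the weaker asymptotic \eqref{eq:growthexp}; otherwise your write-up fills in exactly the calculation the paper leaves implicit when it says Theorem \ref{t:Vub} ``immediately gives'' the upper tail.
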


We now discuss the simple random walk on the UST $\sU$. To help 
distinguish between the various probability laws, we will use the following notation. 
For LERW and simple random walk in $\bZ^2$ we will write 
$\bP^z$ for the law of the process started at $z$. 
The probability law of the UST will be denoted by $\bP$, and 
the UST will be defined on a probability space $(\Omega, \bP)$; we let
$\om$ denote elements of $\Omega$.
For the tree $\sU(\om)$ write $x \sim y$ if $x$ and $y$ are connected
by an edge in $\sU$, and for $x \in \bZ^2$ let
$$ \mu_x=\mu_x (\om) = | \{ y: x\sim y\} | $$
be the degree of the vertex $x$.

The random walk on $\sU(\om)$ is defined on a second space $\sD= (\bZ^2)^{\bZ_+}$. 
Let $X_n$ be the coordinate maps on
$\sD$, and for each $\om \in \Omega$ let $P^x_\om$ be the probability
on $\sD$ which makes $X=(X_n, n \ge 0)$ a simple random walk on $\sU(\om)$ started at $x$.
Thus we have $P^x_\om(X_0=x)=1$, and
$$ P^x_\om( X_{n+1}=y| X_n=x) =\frac{1}{\mu_x(\om)} \q 
\hbox{ if } y \sim x. $$
We remark that since the UST $\sU$ is a subgraph of $\bZ^2$ the SRW 
$X$ is recurrent. 

We define the heat kernel (transition density) with respect to $\mu$ by 
\begin{equation} \label{eq:hkdef}
p^\om_n(x,y) = \mu_y^{-1} P^x_\om(X_n =y). 
\end{equation}
Define the stopping times
\begin{align}\label{e:tRdef}
 \tau_R &= \min \{ n \ge 0: d(0, X_n) > R\},\\
 \label{e:wtrdef}
 \wt \tau_r &= \min \{ n \ge 0: |X_n| > r \}.
\end{align}
Given functions $f$ and $g$ we write
$f \approx g$ to mean
$$ \lim_{n \to \infty} \frac{ \log f(n)}{\log g(n)} = 1, $$
and $f \asymp g$ to mean that there exists $C\ge 1$ such that
$$ C^{-1} f(n) \le g(n) \le C f(n), \q n \ge 1. $$

The following summarizes our main results on the behaviour of $X$.
Some more precise estimates, including  heat kernel estimates, 
can be found in Theorems \ref{ptight} -- \ref{t:hk} in Section \ref{sect:RW}.

\begin{theorem} \label{t:mainrw}
We have for  $\bP$ -a.a. $\om$, $P^0_\om$-a.s.,
\begin{align}
\label{e:m1}
 p_{2n}(0,0) &\approx n^{-8/13}, \\
 \label{e:m2}
 \tau_R &\approx R^{13/5}, \\
 \label{e:m3}
 \wt \tau_r &\approx r^{13/4}, \\
 \label{e:m4}
\max_{0 \leq k \leq n} d(0, X_k) &\approx n^{5/13}.
  \end{align}
\end{theorem}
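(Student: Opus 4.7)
The plan is to apply the standard framework relating volume growth, effective resistance, mean exit times, and heat kernel decay for random walks on weighted graphs. Two inputs drive everything. First, since $\sU$ is a tree, $\Reff(x,y) = d(x,y)$ exactly, so in particular $\Reff(0, \sU \setminus B_d(0,R)) = R$. Second, combining Theorem \ref{t:main2} with $G(r) \approx r^{5/4}$ from \eqref{eq:growthexp} (hence $g(t) \approx t^{4/5}$) gives $|B_d(0,R)| \approx R^{8/5}$. These identify the volume exponent $d_f = 8/5$ and the walk exponent $d_w = d_f + 1 = 13/5$, from which \eqref{e:m1}--\eqref{e:m4} all follow on the logarithmic scale.

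The central step is to prove \eqref{e:m2}, $\tau_R \approx R^{13/5}$. For the upper bound use $\bE^0_\om[\tau_R] = \sum_{y \in B_d(0,R)} \sG_R(0,y)$, where $\sG_R$ is the Green's function of the walk killed on exiting $B_d(0,R)$, together with the standard resistance bound $\sG_R(0,y) \le C R \mu_y$ (valid on a tree with uniformly bounded degree), giving $\bE^0_\om[\tau_R] \le C R \cdot |B_d(0,R)|$; Markov's inequality combined with the upper tail \eqref{e:vlubpii} then controls $P^0_\om(\tau_R > \lambda R^{13/5})$. For the matching lower bound one shows $\bE^0_\om[\tau_R] \ge c R \cdot |B_d(0, R/2)|$ by a local-time argument on the time spent in $B_d(0, R/2)$ before exiting $B_d(0, R)$, then controls $P^0_\om(\tau_R < \lambda^{-1} R^{13/5})$ by combining \eqref{e:vlubpi} with a Paley--Zygmund or second-moment estimate and iterating over independent excursions to upgrade the resulting $O(1)$ lower bound into a polynomially small one. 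Borel--Cantelli along dyadic $R$ then promotes these to \eqref{e:m2}, $\bP$-a.s., $P^0_\om$-a.s.

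The displacement bound \eqref{e:m4} is immediate by inversion: $\max_{k \le n} d(0, X_k) > R$ iff $\tau_R \le n$. For the heat kernel estimate \eqref{e:m1} I would invoke the standard result (going back to Nash, and developed for graphs by Barlow, Kumagai, Telcs, and others) that polynomial volume growth $|B_d(0,R)| \approx R^{d_f}$ together with $\bE^0_\om[\tau_R] \approx R^{d_w}$ yields $p_{2n}(0,0) \approx n^{-d_f/d_w} = n^{-8/13}$ and hence spectral dimension $d_s = 2 d_f/d_w = 16/13$.

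Finally, \eqref{e:m3} follows by combining \eqref{e:m2} with Theorem \ref{t:main1}. With $\bP$-probability $\to 1$ as $\lambda \to \infty$ we have $B_d(0, \lambda^{-1} G(r)) \subset B(0,r) \subset B_d(0, \lambda G(r))$, so $\tau_{\lambda^{-1} G(r)} \le \wt\tau_r \le \tau_{\lambda G(r)}$; with $G(r) \approx r^{5/4}$ this yields $\wt\tau_r \approx G(r)^{13/5} \approx r^{13/4}$ on the log scale. The main obstacle is that the upper containment \eqref{ei:Bd-lb} has only polynomial decay in $\lambda$, so the $\bP$-a.s.\ statement requires a Borel--Cantelli argument along exponentially spaced radii with $\lambda = (\log r)^\beta$ for $\beta$ large; since only logarithmic accuracy is needed this is more than enough.
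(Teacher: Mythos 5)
Your proposal has a critical error at the outset: on a tree it is true that $\Reff(x,y)=d(x,y)$ for a pair of points $x,y$, but this does \emph{not} give $\Reff(0,B_d(0,R)^c)=R$. The set $B_d(0,R)^c$ contains many vertices, and on a branching tree the effective resistance from $0$ to all of $B_d(0,R)^c$ is a parallel combination of the paths from $0$ to the sphere of radius $R+1$, which can be far smaller than $R$ (already on the line $\bZ$ one has $\Reff(0,B_d(0,R)^c)=(R+1)/2$, and on a star with $k$ arms of length $R+1$ it is $(R+1)/k$). What is trivially true on a tree is only the \emph{upper} bound $\Reff(0,B_d(0,R)^c)\le R$.

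This gap is not cosmetic. The lower bound on $\Reff(0,B_d(0,R)^c)$ is precisely one of the three inputs the paper feeds into the Kumagai--Misumi machinery (condition (3) of Definition \ref{jdef}), and it is obtained by a substantive argument: the Nash-Williams cutset bound applied to a carefully constructed family of cutsets in the proof of Theorem \ref{t:Vub}(b), transferred from Euclidean to intrinsic balls in Proposition \ref{p:kmest}(a) with the help of Proposition \ref{p:Ur}. It governs the lower bound on $E^0_\om\tau_R$ (via $g_R(0,0)=\mu_0\,\Reff(0,B_d(0,R)^c)$), the early-exit estimate of Proposition \ref{p:kmtau}(b), and the on-diagonal heat-kernel lower bound. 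Without it, the exponents in \eqref{e:m1}, \eqref{e:m2} and \eqref{e:m4} do not follow: a highly branched tree could have the correct volume growth yet much smaller resistance, yielding a faster walk and smaller return probability. The ``local-time'' and ``Paley--Zygmund'' steps you sketch for the lower bound on $\tau_R$ already presuppose a lower bound on the Green's function at $0$, which is exactly $\mu_0\,\Reff(0,B_d(0,R)^c)$, so they cannot supply the missing ingredient.

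Apart from this, your plan is in the right spirit, and your handling of \eqref{e:m3} and \eqref{e:m4} is essentially what the paper does: the paper sandwiches $\wt\tau_r$ between $\tau_{\lam^{-1}G(r)}$ and $\tau_{\lam G(r)}$ on the event $H_1(r,\lam)$, though it phrases the containments in terms of the connected component $U_r$ of $0$ in $\sU\cap B(0,r)$ rather than the Euclidean ball itself. The paper is also more economical overall: instead of re-deriving exit-time and heat-kernel estimates from scratch, it verifies the hypotheses of \cite{KM} in Proposition \ref{p:KMsat} (two-sided volume bounds plus the one-sided resistance bound, with exponentially small failure probability) and then quotes the conclusions of \cite{KM} and \cite{BJKS} as black boxes, reserving new work only for the $\wt\tau_r$ statement which is not covered there.
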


\sms 
We now explain why these exponents arise. If $G$ is a connected
graph, with graph metric $d$, we can define 
the volume growth exponent (called by physicists the fractal dimension of $G$) by
$$ d_f=d_f(G) = \lim_{R \to \infty} \frac { \log |B_d(0,R)|}{\log R}, $$
if this limit exists.
Using this notation, Theorem \ref{t:main2} and \eqref{eq:growthexp} imply that
$$ d_f (\sU) = 8/5, \q \bP-\hbox{a.s.} $$

Following work by mathematical physicists in the early 1980s, random walks
on graphs with fractal growth of this kind have been studied in the mathematical literature.
(Much of the initial mathematical work was done on diffusions on fractal sets, but
many of the same results carry over to the graph case). 
This work showed that the behaviour of SRW on a (sufficiently regular)
graph $G$ can be summarized by two exponents. The first of these
is the volume growth exponent $d_f$, while the second, denoted $d_w$, and called
the walk dimension, can be defined by
$$ d_w=d_w(G) = \lim_{R \to \infty} \frac { E^0 \tau_R }{\log R}
\q \hbox { (if this limit exists).}  $$
Here $0$ is a base point in the graph, and $\tau_R$ 
is as defined in \eqref{e:tRdef}; it is easy to see that if $G$ is connected 
then the limit is independent of the base point.
One finds that $d_f \ge 1$, $2\le d_w \le 1 + d_f$, and that all these values can
arise -- see \cite{Brev}.

Many of the early papers required quite precise knowledge of the structure
of the graph in order to calculate $d_f$ and $d_w$. However, 
\cite{BCK} showed that in some cases it is sufficient to know two facts:
the volume growth of balls, and the growth of effective resistance between points
in the graph. 
Write $\Reff(x,y)$ for the effective resistance between points $x$ and $y$ 
in a graph $G$ -- see Section \ref{sect:UST} for a precise definition.
The results of \cite{BCK} imply that if $G$ has uniformly bounded vertex degree,
and there exist $\al>0$, $\zeta>0$ such that
\begin{align}
\label{e:vgt}
     c_1 R^\al &\le \abs{B_d(x,R)} \le c_2 R^\al,  \q x \in G, \, R \ge 1,  \\
     \label{e:rgt}
    c_1 d(x,y)^\zeta &\le \Reff(x,y) \le c_2 d(x,y)^\zeta, \q x,y \in G,
\end{align}
then writing $\tau^x_R = \min\{n : d(x,X_n) > R\}$,
\begin{align} \label{e:p2n}
 p_{2n}(x,x) &\asymp n^{-\al/(\al + \zeta)}, \q x \in G, \, n \ge 1, \\
 \label{e:tdw}
 E^x \tau^x_R &\asymp R^{\al + \zeta}, \q x \in G, \, R \ge 1. 
\end{align}
(They also obtained good estimates on the transition probabilities
$P^x(X_n =y)$ -- see \cite[Theorem 1.3]{BCK}.)
 From \eqref{e:p2n} and \eqref{e:tdw} one sees that if
 $G$ satisfies \eqref{e:vgt} and \eqref{e:rgt} then
 $$ d_f = \al, \q d_w = \al + \zeta. $$

The decay $n^{-d_f/d_w}$ for the transition probabilities in \eqref{e:p2n}
can be explained as follows. If $R\ge 1$ and $2n = R^{d_w}$ then
with high probability $X_{2n}$ will be in the ball $B(x, cR)$. This ball
has $c R^{d_f} \approx c n^{d_f/d_w}$ points, and so the average
value of $p_{2n}(x,y)$ on this ball will be $n^{-d_f/d_w}$.
Given enough regularity on $G$, this average value will then
be close to the actual value of $p_{2n}(x,x)$.

In the physics literature a third exponent, called the spectral dimension,
was introduced; this can be defined by 
\begin{equation}\label{ei:ds}
  d_s(G) = -2 \lim_{n \to \infty} \frac{ \log P^x_\om( X_{2n} = x) }{\log 2n}, 
  \q \hbox { (if this limit exists).} 
\end{equation}
This gives the rate of decay of the transition probabilities; one has $d_s(\bZ^d)=d$.
The discussion above indicates that the three indices $d_f$, $d_w$ and $d_s$ 
are not independent, 
and that given enough regularity in the graph $G$ one expects that
$$ d_s =\frac{ 2d_f}{d_w}. $$ 
For graphs satisfying \eqref{e:vgt} and \eqref{e:rgt} one has $d_s = 2\al/(\al + \zeta)$.

Note that if $G$ is a tree and satisfies \eqref{e:vgt} then 
$\Reff(x,y) = d(x,y)$ and so \eqref{e:rgt} holds with $\zeta=1$. Thus 
\begin{equation} \label{e:exp-tree}
 d_f = \al, \q d_w = \al +1, \q d_s = \frac{ 2 \al}{\al +1 }. 
\end{equation}

For random graphs arising from models in statistical physics, such as critical
percolation clusters or the UST, random fluctuations will mean 
that one cannot expect \eqref{e:vgt} and \eqref{e:rgt} to hold uniformly. 
Nevertheless, providing similar estimates hold with high enough
probability, it was shown in \cite{BJKS} and \cite{KM} that one can
obtain enough control on the properties of the random walk $X$ 
to calculate $d_f, d_w$ and $d_s$. 
An additional contribution of \cite{BJKS} was to show that it is sufficient to 
estimate the volume and resistance growth for balls from one base point. 
In section \ref{sect:RW}, we will use these methods to show 
that \eqref{e:exp-tree} holds for the UST, namely that

\begin{theorem} \label{t:dims}
We have for  $\bP$ -a.a. $\om$
\begin{align}\label{e:dims}
 d_f(\sU) = \frac{8}{5}, \q d_w(\sU)= \frac{13}{5}, \q d_s(\sU) = \frac{16}{13}. 
 \end{align}
\end{theorem}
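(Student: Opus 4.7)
The plan is to derive Theorem \ref{t:dims} as a direct corollary of the scaling relation \eqref{eq:growthexp} together with the quantitative estimates already stated in Theorems \ref{t:main2} and \ref{t:mainrw}. The work consists of converting the probabilistic tail bounds into $\bP$-almost sure statements about the logarithmic growth exponents, and of noting that on a tree one always has $\zeta=1$.

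First I would compute the exponent of $g$. From \eqref{eq:growthexp}, $\log G(n)/\log n \to 5/4$, so the inverse relation $G(g(t))=t$ gives $\log g(t)/\log t \to 4/5$. In particular $g(R)^2 = R^{8/5 + o(1)}$ as $R\to\infty$.

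Next I would establish $d_f(\sU)=8/5$. Set $R_k=2^k$ and $\lam_k=k^{10}$. By \eqref{e:vlubpii} and \eqref{e:vlubpi},
$$ \sum_k \bP\bigl( |B_d(0,R_k)| \notin [\lam_k^{-1} g(R_k)^2,\,\lam_k g(R_k)^2]\bigr) \le \sum_k\bigl( Ce^{-ck^{10/3}}+Ce^{-ck^{10/9}}\bigr) <\infty, $$
so by Borel--Cantelli $\lam_k^{-1} g(R_k)^2 \le |B_d(0,R_k)| \le \lam_k g(R_k)^2$ for all large $k$, $\bP$-a.s. Since $R\mapsto |B_d(0,R)|$ is non-decreasing and $R_{k+1}/R_k=2$, interpolation between consecutive dyadic values combined with $\log\lam_k/\log R_k \to 0$ yields $\log |B_d(0,R)|/\log R \to 8/5$ along the full sequence, $\bP$-a.s. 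This is the first equality of \eqref{e:dims}.

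The other two exponents now follow from Theorem \ref{t:mainrw} by the same Borel--Cantelli / monotonicity argument (applied to $\tau_R$ and $p_{2n}(0,0)$ along dyadic subsequences, using the precise estimates in Theorems \ref{ptight}--\ref{t:hk}). Indeed, \eqref{e:m2} gives $\tau_R \approx R^{13/5}$, hence $d_w(\sU)=13/5$, and \eqref{e:m1} gives $p_{2n}(0,0)\approx n^{-8/13}$, hence via \eqref{ei:ds} that $d_s(\sU)=16/13$. These values are the ones forced by the tree identity $\Reff(x,y)=d(x,y)$ (i.e.\ $\zeta=1$) combined with $d_f=8/5$ and the general relations \eqref{e:exp-tree}, so no new computation is needed beyond checking that the almost-sure exponents match the expected ones.

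The main obstacle is therefore not the proof of Theorem \ref{t:dims} itself---which is essentially a repackaging of Theorems \ref{t:main2} and \ref{t:mainrw} via Borel--Cantelli---but rather the verification that those theorems hold, in particular that $\sU$ satisfies the kind of regularity (volume and resistance growth from the origin) with high enough probability to apply the framework of \cite{BCK, BJKS, KM}. Once that machinery is in place, the passage from tail estimates to a.s.\ logarithmic limits is routine, with the only delicate point being the uniform-in-$R$ interpolation between dyadic scales for the volume lower bound, which is handled by monotonicity.
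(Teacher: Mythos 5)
Your argument is essentially the paper's own: the paper deduces $d_f=8/5$ from Theorem \ref{t:main2} (this is stated explicitly in the introduction, with the Borel--Cantelli step left implicit, which you supply), and obtains $d_w=13/5$ and $d_s=16/13$ from the quenched estimates \eqref{e:logpnlima}--\eqref{e:logtaulima} in Theorem \ref{thm-rwre}, which come from Proposition \ref{p:KMsat} plugged into the machinery of \cite{KM}. The one place you should be slightly more careful is $d_w$: the definition in the paper uses $\log E^0_\om \tau_R / \log R$, so the a.s.\ statement $\tau_R \approx R^{13/5}$ in \eqref{e:m2} does not by itself suffice (a.s.\ growth does not automatically control the expectation); what you actually need is \eqref{e:logtaulima}, which bounds $E^x_\om\tau_R$ directly --- but since you cite Theorems \ref{ptight}--\ref{t:hk} as the source of the precise estimates, this is covered.
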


The methods of \cite{BJKS} and \cite{KM} were also used in
\cite{BJKS} to study the incipient infinite cluster (IIC) for
high dimensional oriented percolation, and in \cite{KN} to show
the IIC for standard percolation in high dimensions has spectal dimension $4/3$.
These critical percolation clusters are close to trees and have 
$d_f=2$ in their graph metric. Our results for the UST are the first time
these exponents have been calculated for a two-dimensional model
arising from the random cluster model. It is natural to ask
about critical percolation in two dimensions, but in spite of
what is known via SLE, the values of $d_w$ and $d_s$ appear at
present to be out of reach.

\sms

The rest of this paper is laid out as follows. In Section
\ref{sect:LERW}, we define the LERW on $\bZ^2$ and recall the results
from \cite{Mas09, BM} which we will need.  The paper \cite{BM} gives
bounds on $M_D$, the length of the loop-erasure of a random walk run
up to the first exit of a simply connected domain $D$. However, in
addition to these bounds, we require estimates on $d(0,w)$ which by
Wilson algorithm's is the length of the loop-erasure of a random walk
started at $0$ and run up to the first time it hits $w$; we obtain
these bounds in Proposition \ref{p:ProMw<}.

In Section \ref{sect:UST}, we study the geometry of the two
dimensional UST $\sU$, and prove Theorems \ref{t:main1} and
\ref{t:main2}. In addition (see Proposition \ref{p:kmest}) we show
that with high probability the electrical resistance in the network
$\sU$ between $0$ and $B_d(0,R)^c$ is greater than $R/\lam$. The
proofs of all of these results involve constructing the UST $\sU$ in a
particular way using Wilson's algorithm and then applying the bounds
on the lengths of LERW paths from Section \ref{sect:LERW}.

In Section \ref{sect:RW}, we use the techniques from \cite{BJKS,KM}
and our results on the volume and effective resistance of $\sU$ from
Section \ref{sect:UST} to prove Theorems \ref{t:mainrw} and \ref{t:dims}.

Throughout the paper, we use $c, c'$, $C, C'$ to denote positive
constants which may change between each appearance, but do not depend
on any variable. If we wish to fix a constant, we will denote it with
a subscript, e.g. $c_0$.

\section{Loop erased random walks} 

\label{sect:LERW}

In this section, we look at LERW on $\bZ^2$. We let $S$ be a simple random walk on $\bZ^2$, 
and given a  set $D \subset \bZ^2$, let
$$ \sigma_D = \min \{j \geq 1 : S_j \in \bZ^2 \setminus D \}$$
be the first exit time of the set $D$, and
$$ \xi_D = \min \{j \geq 1 : S_j \in D \}$$
be the first hitting time of the set $D$. If $w \in \bZ^2$, we write $\xi_w$ for $\xi_{\{ w \}}$. We also let $\sigma_R = \sigma_{B(R)}$ 
and use a similar convention for $\xi_R$.

The outer boundary of a set $D \subset \bZ^2$ is 
$$ \partial D = \{ x \in \bZ^2 \setminus D : \text{ there exists $y \in D$ such that $\abs{x-y}=1$} \},$$ 
and its inner boundary is
$$ \partial_i D = \{ x \in D : \text{ there exists $y \in \bZ^2 \setminus D$ such that $\abs{x - y}=1$} \}.$$  

Given a path $\lambda = [\lambda_0, \ldots, \lambda_m]$ in $\bZ^2$, let 
$\operatorname{L}(\lambda)$ denote its 
chronological loop-erasure. More precisely, we let
$$ s_0 = \max \{ j : \lambda(j) = \lambda(0) \},$$
and for $i > 0$,
$$ s_i = \max \{ j: \lambda(j) = \lambda(s_{i-1} + 1) \}.$$
Let $$n = \min \{ i: s_i = m \}.$$
Then 
$$ \operatorname{L} (\lambda) = [\lambda(s_0), \lambda(s_1), \ldots, \lambda(s_n)].$$

We note that by Wilson's algorithm, $\Lo(S[0,\xi_w])$ has the same distribution as $\gamma(0,w)$ -- the unique path from $0$ to $w$ in the UST $\sU$. We will therefore use $\gamma(0,w)$ to denote $\Lo(S[0,\xi_w])$ even when we make no mention of the UST $\sU$.

For positive integers $l$, let $\Omega_l$ be the set of paths
$\omega = [0, \omega_1, \ldots, \omega_k] \subset \bZ^2$ such that
$\omega_j \in B_l$, $j=1, \ldots, k-1$ and $\omega_k \in \partial B_l$. 
For $n \geq l$, define the measure $\mu_{l,n}$ on $\Omega_l$ to be the distribution on $\Omega_l$ obtained by restricting
$\Lo(S[0,\sigma_n])$ to the part of the path from $0$ to the first exit of $B_l$.

For a fixed $l$ and $\omega \in \Omega_l$, it was shown in \cite{Law91} that the sequence 
$\mu_{l,n}(\omega)$ is Cauchy. Therefore, there exists a 
limiting measure $\mu_l$ such that
$$ \lim_{n \to \infty} \mu_{l,n}(\omega) = \mu_l(\omega).$$
The $\mu_l$ are consistent and therefore there exists a measure $\mu$ on 
infinite self-avoiding paths. We call the associated process the infinite 
LERW and denote it by $\wh{S}$. We denote the exit time of a set $D$ 
for $\wh{S}$ by $\wh{\sigma}_D$. By Wilson's algorithm, $\wh{S}[0,\infty)$ has the same distribution 
as $\gamma(0,\infty)$, the unique infinite path in $\sU$ starting at $0$. 
Depending on the context, either notation will be used.

For a set $D$ containing $0$, we let $M_D$ be the number of steps 
of $\Lo(S[0,\sigma_D])$. Notice that if $D = \bZ^2 \setminus \{w\}$ and $S$ is a random walk started 
at $x$, then $M_D = d(x,w)$. In addition, if $D' \subset D$ then we let $M_{D',D}$ be 
the number of steps of $\Lo(S[0,\sigma_D])$ while it is in $D'$, or equivalently the number of points 
in $D'$ that are on the path $\Lo(S[0,\sigma_D])$.
 
We let $\wh{M}_n$ be the number of steps of $\wh{S}[0,\wh{\sigma}_n]$. As in the introduction, we set 
$G(n) = {\rm E}[\wh{M}_n]$, extend $G$ to a continuous strictly increasing function 
from $[1,\infty)$ to $[1,\infty)$ with $G(1)=1$, and let $g$ be the inverse of $G$. 
It was shown \cite{Ken, Mas09} that $G(n) \approx n^{5/4}$. In fact, the following is true.

\begin{lemma} \label{l:vgrwth}
Let $\eps>0$. Then there exist positive constants $c(\eps)$ and $C(\eps)$ such that
if $r\ge 1$ and $\lam \ge 1$, then 
\begin{align} \label{e:KMg1}
  c \lam^{5/4 -\eps} G(r)  &\le  G( \lam r)  \le  C \lam^{5/4+ \eps} G(r) , \\
  c \lam^{4/5 -\eps} g(r)  &\le  g( \lam r)  \le  C \lam^{4/5 + \eps} g(r).
\end{align}
\end{lemma}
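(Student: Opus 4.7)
The scaling bounds in Lemma \ref{l:vgrwth} are stronger than what the asymptotic $G(n) \approx n^{5/4}$ gives directly: using only \eqref{eq:growthexp} one would get $G(\lam r)/G(r) \le C (\lam r)^{5/4+\eps}/r^{5/4-\eps} = C \lam^{5/4+\eps} r^{2\eps}$, with an unwanted non-uniform factor $r^{2\eps}$. So the plan is to invoke the quantitative multiplicative scaling of \cite{Mas09} for $G$, and then derive the bounds on $g$ by inversion.

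\textbf{Step 1 (bounds on $G$, from \cite{Mas09}).} I would first record that the upper and lower bounds on $G(\lam r)/G(r)$ are a reformulation of uniform-in-$r$ scaling estimates proved in \cite{Mas09}. The argument there proceeds by decomposing the infinite LERW $\wh S$ across the dyadic annuli $A_k = B(2^{k+1} r) \setminus B(2^k r)$. Using a separation-of-arms / coupling lemma for LERW, one shows that, conditional on a "nice" exit of $\wh S$ from $B(2^k r)$, the expected length of the portion of $\wh S$ lying in $A_k$ is comparable to $G(2^{k+1}r)-G(2^k r)$, with constants independent of $r$ and $k$. This yields a uniform doubling $c_0 G(r) \le G(2r) \le C_0 G(r)$; iterating gives polynomial bounds of the form $G(\lam r) \le C \lam^{\log_2 C_0} G(r)$ and a matching lower bound, and feeding \eqref{eq:growthexp} into these identifies the correct exponent as $5/4$ up to the arbitrary $\eps$. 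Values of $r$ in the bounded range $[1,N_0(\eps)]$ are absorbed into $c(\eps), C(\eps)$ using continuity of $G$ and $G(1)=1$.

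\textbf{Step 2 (bounds on $g$ by inversion).} Given the bounds on $G$, those on $g$ are formal. Set $s = G(r)$ and $t = G(\lam r)$, so that $r = g(s)$, $\lam r = g(t)$, and $\lam = g(t)/g(s)$. The first displayed inequality then reads
\[
 c\,\Bigl(\tfrac{g(t)}{g(s)}\Bigr)^{5/4-\eps} \;\le\; \tfrac{t}{s} \;\le\; C\,\Bigl(\tfrac{g(t)}{g(s)}\Bigr)^{5/4+\eps}.
\]
Solving each side for $g(t)/g(s)$ and relabelling $(s,t)\mapsto(r,\lam r)$ gives the claimed bound on $g(\lam r)/g(r)$ with exponent $4/5$ and an $\eps'$ (proportional to $\eps$) in the role of the error. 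A small additional step handles $\lam$ close to $1$ using monotonicity of $g$.

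\textbf{Main obstacle.} All of the genuine content sits in Step 1, and specifically in the separation-of-scales estimate for LERW established in \cite{Mas09}. Our contribution here is essentially bookkeeping: verifying that the estimates quoted are truly uniform in $r$ (rather than merely asymptotic as $r\to\infty$), that the small-$r$ boundary values can be absorbed into the constants, and that the inversion in Step 2 preserves the form of the exponents. The nontrivial probabilistic input — the separation/coupling lemma that forces $G$ to scale multiplicatively rather than just asymptotically — is not reproven here.
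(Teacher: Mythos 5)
Your overall structure matches the paper's proof exactly: import the uniform multiplicative scaling for $G$ from prior work, absorb the bounded range $r\in[1,R(\eps)]$ into the constants, and obtain the $g$-bound by inversion. The inversion in your Step 2 is correct. However, the citation in Step 1 is off: the paper cites \cite[Lemma 6.5]{BM} for the $G$-scaling, not \cite{Mas09}. The distinction matters, because \cite{Mas09} establishes only the asymptotic growth exponent \eqref{eq:growthexp}, which — as you yourself point out at the start — is strictly weaker than the uniform-in-$r$ bound being invoked (it leaves an unwanted $r^{2\eps}$ factor). The uniform multiplicative estimate is precisely the content of \cite[Lemma 6.5]{BM}, and attributing it to \cite{Mas09} is a genuine misattribution rather than just a bibliographic slip.

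A secondary remark on your sketch of the internal argument: iterating a uniform doubling bound $c_0 G(r)\le G(2r)\le C_0 G(r)$ gives a polynomial bound with exponent $\log_2 C_0$, and merely ``feeding \eqref{eq:growthexp} into these'' does not identify the exponent as $5/4\pm\eps$ — the asymptotic relation controls $\log G(n)/\log n$ but not the ratio $G(\lam r)/G(r)$ uniformly in $r$. Identifying the exponent requires the sharper multi-scale comparison proved in \cite{BM}, not doubling plus the limiting exponent. Since you explicitly treat Step 1 as a cited black box this is not fatal to your write-up, but the heuristic as stated would not survive scrutiny if one actually tried to reprove the lemma from it.
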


\begin{proof} 
The first equation follows from \cite[Lemma 6.5]{BM}. Note that while the statement there 
holds only for all $r \ge R(\epsilon)$, by choosing different values of $c$ and $C$, one can easily extend it 
to all $r \ge 1$. The second statement follows from the first since $g = G^{-1}$ and $G$ is increasing. 
\qed
\end{proof}
 
The following result from \cite{BM} gives bounds on the tails of $\wh{M}_n$ and of $M_{D',D}$ 
for a broad class of sets $D$ and subsets $D' \subset D$. 
We call a subset of $\bZ^2$ {\em simply connected} if all connected components of its complement are infinite.

\begin{thm} \cite[Theorems 5.8 and 6.7]{BM} \label{t:expLERW}
There exist positive global constants $C$ and $c$, and given $\epsilon > 0$, 
there exist positive constants $C(\epsilon)$ and $c(\epsilon)$ such that for 
all $\lambda > 0$ and all $n$, the following holds.
\begin{enumerate}
\item Suppose that $D \subset \bZ^2$ contains $0$, and $D' \subset D$ is such that for all $z \in D'$, 
there exists a path in $D^c$ connecting $B(z,n+1)$ and $B(z,2n)^c$
 (in particular this will hold if $D$ is simply connected and ${\rm dist}(z,D^c) \leq n$ for all $z \in D'$). Then
\begin{equation} \label{eq:uptail}
\Pro{}{M_{D',D} > \lambda G(n)} \leq  2 e^{-c \lambda}.
\end{equation}
\item For all $D \supset B_n$,
\begin{equation} 
\Pro{}{M_D < \lambda^{-1} G(n)} \leq C(\epsilon) e^{-c(\epsilon) \lambda^{4/5 - \epsilon}}.
\end{equation}
\item  
\begin{equation} \label{eq:uptailinf}
\Pro{}{\wh{M}_n > \lambda G(n)} \leq C e^{-c \lambda}.
\end{equation}
\item 
\begin{equation} \label{eq:lowtailinf}
\Pro{}{\wh{M}_n < \lambda^{-1} G(n)} \leq C(\epsilon) e^{-c(\epsilon) \lambda^{4/5 - \epsilon}}.
\end{equation}
\end{enumerate}
\end{thm}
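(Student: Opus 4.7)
Since this is quoted from \cite[Theorems 5.8 and 6.7]{BM}, only a sketch of the strategy is needed here. The four parts separate naturally into upper-tail bounds ((1), (3)) and lower-tail bounds ((2), (4)), which require rather different techniques.

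\textbf{Upper tails.} For \eqref{eq:uptailinf}, I would decompose $\wh S[0,\wh \sigma_n]$ across the dyadic annuli $B_{2^{k+1}}\setminus B_{2^k}$, using the domain Markov property of LERW at each $\wh \sigma_{2^k}$. Conditional on the past, the next LERW segment is the loop-erasure of a random walk avoiding the already-drawn path; a Beurling-type escape estimate in the annulus combined with the LERW growth bound $\bE[\wh M_{2^{k+1}} - \wh M_{2^k}] \le C G(2^k)$ gives uniform conditional exponential moments
\[
\bE\bigl[\exp\bigl(\alpha (\wh \sigma_{2^{k+1}}-\wh \sigma_{2^k})/G(2^k)\bigr)\,\big|\,\wh S[0,\wh \sigma_{2^k}]\bigr] \le C
\]
for some fixed $\alpha > 0$. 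Multiplying these out across $k$, summing $\sum_k G(2^k) \le C G(n)$ via Lemma \ref{l:vgrwth}, and applying Markov's inequality yields \eqref{eq:uptailinf}. Part (1) is handled the same way, but with the concentric annuli replaced by annuli around each point $z \in D'$; the hypothesis on $D'$ supplies, via the path in $D^c$ from $B(z,n+1)$ to $B(z,2n)^c$, the Beurling escape probability needed to control each such excursion through $D'$.

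\textbf{Lower tails.} For \eqref{eq:lowtailinf}, the strategy rests on the separation lemma for LERW in the sense of \cite{Mas09}: at each dyadic scale $r$, with probability bounded below by a universal $p_0 > 0$, the loop-erasure of the random walk between $\sigma_{r/2}$ and $\sigma_r$ retains at least $c_0 G(r)$ points in $\wh S[0,\infty)$, and does so in a configuration well-separated from its past so that later scales become essentially independent of it. Writing $\wh M_n$ as a sum of scale contributions and applying a Chernoff-type estimate to the number of ``good'' scales yields the stretched-exponential bound $e^{-c(\eps)\lambda^{4/5-\eps}}$; the exponent $4/5$ matches the reciprocal of the LERW growth exponent in \eqref{eq:growthexp}. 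Part (2) is analogous, with $D \supset B_n$ ensuring that the walk must traverse every relevant scale before exiting $D$.

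\textbf{Main obstacle.} The principal difficulty is the non-locality of loop-erasure: the already-drawn path can substantially bias later segments via avoidance, coupling different scales in a subtle way. The separation lemma is the central tool that decouples them and guarantees a positive probability of a useful contribution at each scale. Turning this into the sharp exponent $4/5 - \eps$ — rather than a merely polynomial bound — requires a careful multiscale Chernoff argument, with the $\eps$ loss absorbing the sub-polynomial corrections in Lemma \ref{l:vgrwth}.
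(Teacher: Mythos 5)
This theorem is not proved in the paper; it is quoted verbatim from \cite[Theorems 5.8 and 6.7]{BM}, so there is no in-paper argument to compare against. That said, your sketch of the \cite{BM} strategy is worth examining on its own terms, and while the upper-tail part is plausible, the lower-tail part contains a genuine gap.

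For the upper tail, the dyadic-annulus decomposition with conditional exponential moments does work, provided one is careful with normalisation: one needs the conditional bound $\bE[\exp(\alpha\Delta_k/G(n))\mid \text{past}]\le \exp(C\alpha\,G(2^k)/G(n))$ rather than merely $\le C$ (the latter would give a product $C^{\log_2 n}$ that blows up), and then the geometric sum $\sum_k G(2^k)\lesssim G(n)$ makes the product bounded. This is the spirit of what you wrote, so no complaint there beyond the imprecision.

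The lower tail is where the argument fails. If you decompose into dyadic annuli and let a ``good'' scale $r=2^k$ be one where the LERW contributes at least $c_0 G(r)$ steps, then the event $\{\wh M_n < \lambda^{-1}G(n)\}$ forces every scale with $c_0 G(2^k)>\lambda^{-1}G(n)$, i.e.\ $2^k\gtrsim g(\lambda^{-1}G(n))\sim \lambda^{-4/5}n$, to be bad. There are only $O(\log\lambda)$ such dyadic scales, so a Chernoff bound on the number of good scales gives at best $(1-p_0)^{c\log\lambda}=\lambda^{-c'}$ --- a polynomial, not a stretched exponential. The contributions of the remaining (small) dyadic scales sum geometrically to $O(\lambda^{-1}G(n))$ and cannot rescue this. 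The correct decomposition, which is what \cite{BM} actually uses, is into roughly $m\sim\lambda^{4/5}$ \emph{linearly spaced} concentric annuli of equal width $r_*\sim g(\lambda^{-1}G(n))\sim\lambda^{-4/5}n$: each such annulus independently has probability $\ge p_0$ of contributing $\gtrsim G(r_*)\sim\lambda^{-1}G(n)$ steps (by the separation lemma), so the bad event forces all but $O(1)$ of the $m$ annuli to fail, giving $(1-p_0)^{m}\le e^{-c\lambda^{4/5}}$. The $\eps$ loss then comes exactly, as you say, from the sub-polynomial corrections in Lemma \ref{l:vgrwth} when converting between $G$, $g$ and powers of $\lambda$.
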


We would like to use \eqref{eq:uptail} in the case where $D = \bZ^2 \setminus \{w\}$ and $D' = B(0,n) \setminus \{w\}$. 
However these choices of $D$ and $D'$ do not satisfy the hypotheses in \eqref{eq:uptail}, so
we cannot use Theorem \ref{t:expLERW} directly. 
The idea behind the proof of the following proposition is to get the distribution on $\gamma(0,w)$ 
using Wilson's algorithm by first running an infinite LERW $\gamma$ (whose complement is simply connected) 
and then running a LERW from $w$ to $\gamma$.

\begin{proposition} \label{p:expbnd2} \label{p:expbnd}
There exist positive constants $C$ and $c$ such that the following holds.
Let $ n \ge 1$ and $w \in B(0,n)$. Let $Y_w = w$ if $\gamma(0,w) \subset B(0,n)$;
otherwise let $Y_w$ be the first point on the path
$\gamma(0,w)$ which lies outside $B(0,n)$. Then,
\begin{equation} \label{e:point2}
\Pro{}{ d(0,Y_w) >  \lambda G(n) } \leq C e^{-c \lambda}.
\end{equation}
\end{proposition}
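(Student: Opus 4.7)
The plan is to generate $\gamma(0,w)$ by Wilson's algorithm in two stages, decomposing it as a prefix of the infinite LERW from $0$ concatenated with a reversed LERW from $w$, and then to bound each piece by appealing to Theorem~\ref{t:expLERW}. First I generate $\wh S = \gamma(0,\infty)$; then, conditionally on $\wh S$, I run an independent SRW from $w$ until it hits $\wh S$ and let $\eta$ be its loop-erasure, a self-avoiding path from $w$ to an endpoint $z\in\wh S$. By Wilson's algorithm the path $\gamma(0,w)$ in $\wh S\cup\eta$ traverses $\wh S$ from $0$ to $z$ and then $\eta$ in reverse from $z$ to $w$. The case $w\in\wh S$ is trivial, since then $\gamma(0,w)$ is already a prefix of $\wh S$ and $d(0,Y_w)\le \wh M_n$ is immediate.

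The key deterministic bound is
\[
 d(0,Y_w)\;\le\;\wh M_n + |\eta\cap B(0,n)|.
\]
Let $k_z$ denote the index of $z$ on $\wh S$. If $\wh S[0,k_z]$ already leaves $B(0,n)$, then $Y_w$ is the first $\wh S$-exit point and $d(0,Y_w)=\wh\sigma_n=\wh M_n$. Otherwise $k_z \le \wh M_n$, and the contribution of the $\eta$-piece to $d(0,Y_w)$ is at most the number of vertices of $\eta$ inside $B(0,n)$: either all of $\eta$ lies in $B(0,n)$ (when $\gamma(0,w)\subset B(0,n)$), or $\eta$ has a last exit point from $B(0,n)$, and the relevant suffix from that point back to $z$ has all its vertices except the first inside $B(0,n)$.

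The two terms are then estimated independently. For $\wh M_n$, Theorem~\ref{t:expLERW}(3) directly gives $\bP(\wh M_n>\lambda G(n))\le Ce^{-c\lambda}$. For $|\eta\cap B(0,n)|$, I work conditionally on $\wh S$ and, after shifting coordinates so that $w$ plays the role of the origin, apply Theorem~\ref{t:expLERW}(1) with $D$ the connected component of $\bZ^2\setminus\wh S$ containing $w$ and $D'=D\cap B(0,n)$. The required hypothesis — that for every $z\in D'$ there is a path in $D^c$ joining $B(z,n+1)$ to $B(z,2n)^c$ — holds deterministically, because $\wh S\subset D^c$ starts at $0\in B(z,n+1)$ (as $|z|\le n$) and escapes to infinity, so the initial segment of $\wh S$ up to its first exit of $B(z,2n)$ supplies the crossing. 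This yields $\bP(|\eta\cap B(0,n)|>\lambda G(n)\mid\wh S)\le 2e^{-c\lambda}$; integrating out $\wh S$, a union bound on the two pieces, and a rescaling $\lambda\mapsto\lambda/2$ complete the proof.

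The step that most deserves care is the deterministic case analysis supporting the displayed inequality: one must correctly locate the first exit point of $\gamma(0,w)$ on either the $\wh S$- or $\eta$-piece and check that the $\eta$-contribution is majorised by $|\eta\cap B(0,n)|$. Once this reduction is secured, the two probabilistic estimates apply essentially off the shelf, and no topological property of $D$ beyond the fact that $\wh S$ contains $0$ and goes to infinity is needed to verify the hypothesis of Theorem~\ref{t:expLERW}(1).
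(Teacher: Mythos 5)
Your proposal is correct and follows essentially the same route as the paper's proof: generate the infinite LERW $\wh S=\gamma(0,\infty)$ first, attach $w$ via Wilson's algorithm inside the simply connected component $D$ of $\bZ^2\setminus\wh S$ containing $w$, bound $d(0,Y_w)$ by $\wh M_n$ plus the number of LERW steps in $D\cap B(0,n)$, and apply parts (3) and (1) of Theorem~\ref{t:expLERW} with a union bound. The only (inessential) difference is that you verify the hypothesis of Theorem~\ref{t:expLERW}(1) directly via the explicit path condition, whereas the paper invokes the ``simply connected and $\dist(z,D^c)\le n$'' sufficient condition; your more detailed case analysis for the deterministic inequality $d(0,Y_w)\le\wh M_n+M_{D',D}$ is also sound.
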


\begin{proof}
Let $\gamma$ be any infinite path starting from 0, and let $\wt D = \bZ^2 \setminus \gamma$. 
Then $\wt D$ is the union of disjoint simply connected subsets
$D_i$ of $\bZ^2$; we can assume $w \in D_1$ and let $D_1 = D$.  
By \eqref{eq:uptail}, (taking $D'=B_n \cap D$) there exist $C < \infty$ and $c > 0$ such that 
\begin{equation}  \label{eq:expbnd}
\Pro{w}{M_{D',D} > \lambda G(n)} \leq C e^{-c \lambda}.
\end{equation}

Now suppose that $\gamma$ has the distribution of an infinite LERW started at 0. 
By Wilson's algorithm, if $S^w$ is an independent random walk started at $w$, 
then $\gamma(0,w)$ has the same distribution as the path from 
$0$ to $w$ in $\gamma \cup \Lo(S^w[0,\sigma_D])$. Therefore,
$$ d(0,Y_w) = \abs{\gamma(0,Y_w)} \leq \wh{M}_n + M_{D',D}, $$
and so,
\begin{align*}
 \Pro{}{d(0,Y_w) >  \lambda G(n)} 
 \leq \Pro{}{\wh{M}_n > (\lambda/2) G(n)} 
  + \max_D \Pro{w}{M_{D',D} > (\lambda/2) G(n)}.
\end{align*}
The result then follows from \eqref{eq:uptailinf} and \eqref{eq:expbnd}. \qed
\end{proof}

\begin{lemma} \label{condesc} 
There exists a positive constant $C$ such that for all $k \geq 2$, $n \geq 1$, and 
$K \subset \bZ^2 \setminus B_{4kn}$, the following holds. The probability that $\Lo(S[0,\xi_K])$ 
reenters $B_n$ after leaving $B_{kn}$ is less than $C k^{-1}$. This also holds for infinite LERWs, namely
\begin{equation}
\Pro{}{\wh{S}[\wh{\sigma}_{kn}, \infty) \cap B_{n} \neq \emptyset} \leq C k^{-1}.
\end{equation}
\end{lemma}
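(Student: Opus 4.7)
The plan is to first establish the infinite LERW version $\Pro{}{\wh{S}[\wh{\sigma}_{kn}, \infty) \cap B_n \neq \emptyset} \leq Ck^{-1}$, and then to deduce the statement for $\Lo(S[0,\xi_K])$ via Wilson's algorithm. For the infinite LERW case, I would use the domain Markov property of $\wh{S}$: conditional on $\wh{S}[0,\wh{\sigma}_{kn}]=\eta$, the continuation $\wh{S}[\wh{\sigma}_{kn},\infty)$ is distributed as an infinite LERW in $\bZ^2\setminus\eta$ started from the endpoint of $\eta$ on $\partial B_{kn}$. Since $\eta$ is a simple path from $0$ to $\partial B_{kn}$, it acts as a Beurling-type barrier separating the continuation from $B_n$. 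Combining a Beurling projection estimate for the underlying SRW with the LERW length tails from Theorem \ref{t:expLERW} should yield the required $Ck^{-1}$ bound.

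For the finite-$K$ case, I would construct $\gamma(0,K)$ (which has the same distribution as $\Lo(S[0,\xi_K])$) via Wilson's algorithm, by first generating $\wh{S}$ from $0$ to infinity and then running LERW branches from each $w\in K$ to the current tree. The resulting path consists of an initial segment of $\wh{S}$, from $0$ to a joining vertex $y^{*}\in\wh{S}$, followed by the reversal of a LERW branch from some $w^{*}\in K$ to $y^{*}$. The event that $\gamma(0,K)$ reenters $B_n$ after leaving $B_{kn}$ then decomposes into two cases: (i) $\wh{S}$ itself reenters $B_n$ after leaving $B_{kn}$, which is handled by the infinite LERW bound; or (ii) a LERW branch from some $w\in K\subset B_{4kn}^{c}$ passes through $B_n$ before joining $\wh{S}$. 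Case (ii) is controlled by applying a Beurling-type estimate to the underlying SRW of the branch --- using $\wh{S}$ extending to infinity as a barrier --- combined with Theorem \ref{t:expLERW}.

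The main obstacle is achieving the sharp $C/k$ decay in the infinite LERW case. A naive 2D SRW comparison yields only $C/\log k$ since SRW in $\bZ^2$ is recurrent, so the extra polynomial decay must come from exploiting LERW-specific properties: the simple-path barrier $\eta$ provides the Beurling-type gain in the capacity for the walk to re-enter $B_n$, while the LERW length estimates from Theorem \ref{t:expLERW} (which incorporate the growth exponent $5/4$) control the probability of backtracking inside the annulus $B_{kn}\setminus B_n$.
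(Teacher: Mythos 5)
Your proposed reduction of the finite-$K$ case to the infinite case is the main problem. You claim that $\Lo(S[0,\xi_K])$ has the same distribution as the path from $0$ to $K$ in the UST of $\bZ^2$ produced by Wilson's algorithm rooted at $\infty$. That identity is valid when $K=\{w\}$ is a single vertex (this is exactly Pemantle's theorem, and the paper does use it in Proposition~\ref{p:expbnd}), but it fails for a general set $K$. For a set, $\Lo(S[0,\xi_K])$ is the first branch of Wilson's algorithm on the \emph{quotient graph} $\bZ^2/K$ with the collapsed vertex as root, i.e.\ it corresponds to the UST with $K$ wired. Rooting at $\infty$ instead produces a genuinely different random tree, and the path from $0$ to the tree-closest point of $K$ in that tree has a different law than $\Lo(S[0,\xi_K])$. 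So even a correct analysis of the object you construct would not prove the stated lemma. The paper avoids this entirely by proving the finite-$K$ bound directly and then obtaining the infinite-LERW version by taking $K=\bZ^2\setminus B_m$ and letting $m\to\infty$ --- the opposite order to yours, and the one that is actually forced by the definition of $\widehat S$ as a limit of finite LERWs.

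Your sketch of the infinite-LERW bound also leaves out the decisive step. After conditioning on the initial segment $\eta$, the continuation is the loop-erasure of a walk conditioned on an event of small probability (hitting $K$, resp.\ escaping to infinity, before $\eta$), so you cannot simply apply Beurling to the conditioned walk. The paper bounds the ratio
\[
  \frac{\Pro{z}{\xi_n<\xi_K;\ \xi_K<\xi_\alpha}}{\Pro{z}{\xi_K<\xi_\alpha}},
\]
getting two Beurling factors $k^{-1/2}$ in the numerator, and then must show the denominator is not itself small by a comparable amount. This requires the Harnack inequality on $\partial B_{2kn}$ together with a separation lemma (\cite[Proposition~3.5]{Mas09}) showing $\Pro{z}{\sigma_{2kn}<\xi_\alpha}\ge c\,\Pro{z}{\xi_{kn/2}<\xi_\alpha}$. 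Your proposal replaces this with the LERW length tails of Theorem~\ref{t:expLERW}, but those control the \emph{number of steps} of the loop-erasure, not the geometric event of re-entering a ball; a short LERW can still re-enter $B_n$, so the length tails give nothing here. The actual $C/k$ decay comes from the two Beurling factors in the numerator, made usable by the separation/Harnack control of the denominator --- that is the ingredient your argument is missing.
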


\begin{proof}
The result for infinite LERWs follows immediately by taking $K = \bZ^2 \setminus B_m$ and letting $m$ tend to $\infty$. 

We now prove the result for $\Lo(S[0,\xi_K])$. Let $\alpha$ be the part of the path 
$\Lo(S[0,\xi_K])$  from $0$ up to the first point $z$ where it exits $B_{kn}$. 
Then by the domain Markov property for LERW \cite{Law91}, conditioned on $\alpha$, 
the rest of $\Lo(S[0,\xi_K])$ has the same distribution as the loop-erasure of a random walk started at $z$, 
conditioned on the event $\{ \xi_K < \xi_\alpha\}$. Therefore, it is sufficient to show 
that for any path $\alpha$ from $0$ to $\partial B_{kn}$ and $z \in \partial B_{kn}$, 
\begin{eqnarray} \label{eqcondesc}
\condPro{z}{\xi_n < \xi_K}{\xi_K < \xi_\alpha} 
= \frac{\Pro{z}{\xi_{n} < \xi_K; \xi_K < \xi_\alpha}}{\Pro{z}{\xi_K < \xi_\alpha}} \leq C k^{-1}.
\end{eqnarray}

On the one hand, 
\begin{align*}
&  \Pro{z}{\xi_n < \xi_K; \xi_{K} < \xi_\alpha} \\
&\leq  \Pro{z}{\xi_{kn/2} < \xi_\alpha} \max_{x \in \partial_i B_{kn/2}} 
\Pro{x}{\xi_n < \xi_\alpha} \max_{w \in \partial B_{n}} 
\Pro{w}{\sigma_{2kn} < \xi_\alpha} \max_{y \in \partial B_{2kn}} \Pro{y}{\xi_K < \xi_\alpha}. 
\end{align*}
However, by the discrete Beurling estimates (see \cite[Theorem 6.8.1]{LL08}), 
for any $x \in \partial_i B_{kn/2}$ and $w \in \partial B_{n}$,
$$  \Pro{x}{\xi_n < \xi_\alpha} \leq C k^{-1/2};$$
$$  \Pro{w}{\sigma_{2kn} < \xi_\alpha} \leq C k^{-1/2}.$$
Therefore,
$$ \Pro{z}{\xi_n < \xi_K; \xi_{K} < \xi_\alpha} \leq C k^{-1} 
\Pro{z}{\xi_{kn/2} < \xi_\alpha} \max_{y \in \partial B_{2kn}} \Pro{y}{\xi_K < \xi_\alpha}.$$

On the other hand,
\begin{equation*}
\Pro{z}{\xi_K < \xi_\alpha} \ge 
\Pro{z}{\sigma_{2kn} < \xi_\alpha} \min_{y \in \partial B_{2kn}} \Pro{y}{\xi_K < \xi_\alpha}.
\end{equation*}
By the discrete Harnack inequality,
$$ \max_{y \in \partial B_{2kn}} \Pro{y}{\xi_K < \xi_\alpha} \leq C \min_{y \in \partial B_{2kn}} \Pro{y}{\xi_K < \xi_\alpha}.$$
Therefore, in order to prove \eqref{eqcondesc}, it suffices to show that
$$ \Pro{z}{\sigma_{2kn} < \xi_\alpha} \geq c \Pro{z}{\xi_{kn/2} < \xi_\alpha}.$$
Let $B = B(z; kn/2)$. By \cite[Proposition 3.5]{Mas09}, there exists $c > 0$ such that
$$ \condPro{z}{\abs{\arg(S(\sigma_B) - z)} \leq \pi/3}{\sigma_B < \xi_\alpha} > c.$$ 
Therefore, 
\begin{align*}
\Pro{z}{\sigma_{2kn} < \xi_\alpha} 
&\ge \sum_{\substack{y \in \partial B \\ 
 \abs{\arg(y - z)} \leq \pi/3}} \Pro{y}{\sigma_{2kn} < \xi_\alpha} \Pro{z}{\sigma_B < \xi_\alpha; S(\sigma_B) = y} \\
&\ge c \Pro{z}{\sigma_B < \xi_\alpha; \abs{\arg(S(\sigma_B) - z)} \leq \pi/3} \\
&\ge  c \Pro{z}{\sigma_B < \xi_\alpha} \\
&\ge  c \Pro{z}{\xi_{kn/2} < \xi_\alpha}.
\end{align*} \qed
\end{proof}

\begin{remark}  \label{r:shatlb}
{\rm
One can also show that there exists $\delta > 0$ such that
\begin{equation}\label{e:lhat-retn}
\Pro{}{\wh{S}[\wh{\sigma}_{kn}, \infty) \cap B_{n} \neq \emptyset} \geq c k^{-\delta}.
\end{equation}
As we will not need this bound we only give a  sketch of the proof. Since it 
will not be close to being optimal, we will not try to find the value of $\delta$ that the argument yields. 

First, we have 
\begin{align*} 
 \Pro{}{\wh{S}[\wh{\sigma}_{kn}, \infty) \cap B_{n} \neq \emptyset} 
 &\geq \Pro{}{\wh{S}[\wh{\sigma}_{kn}, \wh{\sigma}_{4kn}) \cap B_{n} \neq \emptyset}. 
\end{align*}
However, by \cite[Corollary 4.5]{Mas09}, the latter probability is comparable to the probability 
that $\Lo(S[0,\sigma_{16kn}])$ leaves $B_{kn}$ and then reenters $B_n$ before leaving $B_{4kn}$. 
Call the latter event $F$.

Partition $\bZ^2$ into the three cones $A_1 = \{z \in \bZ^2 : 0 \leq \arg(z) < 2\pi/3 \}$, $A_2 = \{z \in \bZ^2 : 2\pi/3 \leq \arg(z) < 4\pi/3 \}$ and $A_3 = \{z \in \bZ^2 : 4\pi/3 \leq \arg(z) < 2\pi \}$. Then the event $F$ contains the event that a random walk started at $0$ 
\begin{shortitemize}
\item[(1)] leaves $B_{2kn}$ before leaving $A_1 \cup B_{n/2}$,
\item[(2)] then enters $A_2$ while staying in $B_{4kn} \setminus B_{kn}$,
\item[(3)] then enters $B_n$ while staying in $A_2 \cap B_{4kn}$,
\item[(4)] then enters $A_3$ while staying in $A_2 \cap B_n \setminus B_{n/2}$, 
\item[(5)] then leaves $B_{16kn}$ while staying in $A_3 \setminus B_{n/2}$. 
\end{shortitemize}
One can bound the probabilities of the events in steps (1), (3) and (5) from below by $c k^{-\beta}$ for
some $\beta>0$. 
The other steps contribute terms that can be bounded from below by a constant; combining
these bounds gives \eqref{e:lhat-retn}.
}
\end{remark}

\begin{lemma} \label{l:Losubset} There exists a positive constant $C$ such that for all $k \geq 1$ and $w \in \bZ^2$, 
\begin{equation}
\frac{1}{8} k^{-1} \leq \Pro{}{\gamma(0,w) \not\subset B_{k\abs{w}}} \leq C k^{-1/3}.
\end{equation}
\end{lemma}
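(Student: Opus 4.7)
The plan is to prove the two inequalities separately, both by using Wilson's algorithm in a suitable order. For the upper bound I build $\gamma(0,w)$ by first sampling the infinite LERW $\gamma(0,\infty)$ and then running an independent random walk $S^w$ from $w$ until it hits $\gamma(0,\infty)$ at a point $z$; the path $\gamma(0,w)$ is then the concatenation of the initial segment $\gamma(0,z)$ of $\gamma(0,\infty)$ with the reversal of $\Lo(S^w[0,\xi_{\gamma(0,\infty)}])$. Hence $\{\gamma(0,w)\not\subset B_{k|w|}\}$ is contained in the union of the two events (a) the loop-erasure from $w$ leaves $B_{k|w|}$, and (b) $z$ lies on $\gamma(0,\infty)$ beyond its first exit from $B_{k|w|}$. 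Event (a) forces $\sigma_{k|w|}(S^w)<\xi_{\gamma(0,\infty)}$, and since $\gamma(0,\infty)$ contains a connected sub-path from $0$ to $\partial B_{k|w|}$ while $|w|/(k|w|)=k^{-1}$, the discrete Beurling estimate bounds this by $Ck^{-1/2}$.

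For event (b), I would introduce the intermediate radius $\rho=k^{2/3}|w|$. By Lemma \ref{condesc} with $n=\rho$ and scaling factor $k|w|/\rho=k^{1/3}$, the probability that $\gamma(0,\infty)$ re-enters $B_\rho$ after its first exit from $B_{k|w|}$ is at most $Ck^{-1/3}$. On the complementary event, $\gamma(0,\infty)\cap B_\rho$ equals the initial segment of $\gamma(0,\infty)$ up to its last point in $B_\rho$ and is therefore a connected path from $0$ to $\partial B_\rho$; in order for (b) to occur, $S^w$ must then reach $\partial B_\rho$ before hitting this connected obstacle, which Beurling again bounds by $C(|w|/\rho)^{1/2}=Ck^{-1/3}$. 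Summing the three contributions yields $Ck^{-1/3}$, and the exponent $2/3$ is chosen to balance the two sources of error inside (b).

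For the lower bound I reverse the order in Wilson's algorithm: first sample $\gamma(w,\infty)$, then build $\gamma(0,w)$ by running an independent simple random walk $S^0$ from $0$ until it hits $\gamma(w,\infty)$ at some point $z'$. If $z'\notin B_{k|w|}$ then the sub-arc of $\gamma(w,\infty)$ joining $w$ (which lies in $B_{|w|}\subset B_{k|w|}$) to $z'$ (outside $B_{k|w|}$) is contained in $\gamma(0,w)$ and must cross $\partial B_{k|w|}$, so $\gamma(0,w)\not\subset B_{k|w|}$; consequently it suffices to show $\bP(z'\notin B_{k|w|})\ge\frac{1}{8}k^{-1}$. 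For this I would combine an elementary random-walk hitting probability in $\bZ^2$ with the fact that $\gamma(w,\infty)$ extends to infinity: on a favorable event of uniformly positive probability in which $\gamma(w,\infty)$ makes a substantial excursion outside $B_{2k|w|}$ before returning close to $w$, the conditional probability that $S^0$ hits this far portion of $\gamma(w,\infty)$ first is at least of order $k^{-1}$ by a standard harmonic-measure comparison.

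The main obstacle is the lower bound: whereas the upper bound is a clean two-scale Beurling argument in which Lemma \ref{condesc} handles the re-entry question, obtaining a uniform $\frac{1}{8}k^{-1}$ lower bound requires controlling the geometry of the random infinite path $\gamma(w,\infty)$ simultaneously with the harmonic measure of its far branch from $0$, and pinning down an explicit small constant rather than a non-explicit $c$.
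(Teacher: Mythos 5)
Your upper bound argument is essentially the paper's: you condition on the infinite LERW $\gamma(0,\infty)$, introduce the intermediate scale $\rho = k^{2/3}|w|$, and use Lemma~\ref{condesc} for the re-entry event together with Beurling for the event that $S^w$ exits $B_\rho$ before hitting $\gamma(0,\infty)$. Your extra event (a) (that $\Lo(S^w[0,\xi_{\gamma}])$ itself leaves $B_{k|w|}$) is redundant, since it already forces $S^w$ to leave $B_\rho\subset B_{k|w|}$ before hitting $\gamma(0,\infty)$ and is thus contained in your event for (b); the paper works with exactly the two events you use inside (b).

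For the lower bound your approach diverges from the paper and has a real gap. Reducing to $\bP(z'\notin B_{k|w|})\ge \tfrac{1}{8k}$ is a valid reduction, but the bound itself is left unproved, and the sketch you offer does not obviously close it. In particular, the event ``$\gamma(w,\infty)$ makes a substantial excursion outside $B_{2k|w|}$ before returning close to $w$'' does \emph{not} have uniformly positive probability: by Lemma~\ref{condesc} (and the matching lower bound sketched in Remark~\ref{r:shatlb}), the probability that the infinite LERW re-enters a small ball around $w$ after exiting $B_{2k|w|}$ decays polynomially in $k$, so this favorable event cannot carry a $k$-independent constant. Even setting that aside, extracting a quantitative harmonic-measure lower bound for $S^0$ hitting the far branch of a random fractal obstacle before the near branch is a genuinely delicate step that you have not supplied, and it certainly would not produce the explicit constant $\tfrac18$. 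The paper sidesteps all of this with a short combinatorial/topological argument from \cite[Theorem~14.3]{BLPS}: it places $8k$ equally spaced points $x_1,\dots,x_{8k}$ (with $x_1=0$ and $|x_{i+1}-x_i|=|w|$) around the boundary of a square of side $2k|w|$, observes that if every $\gamma(x_i,x_{i+1})$ stayed inside $B(x_i,k|w|)$ the concatenation would form a closed loop in the tree (impossible), and concludes by a union bound together with the transitivity and $90^\circ$-rotation invariance of $\bZ^2$ that each of the $8k$ identically distributed probabilities is at least $\tfrac{1}{8k}$. That argument is both complete and gives the explicit constant you were missing.
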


\begin{proof}
We first prove the upper bound. By adjusting the value of $C$ 
we may assume that $k \geq 4$. As in the proof of Proposition \ref{p:expbnd}, in order to 
obtain $\gamma(0,w)$, we first run an infinite 
LERW $\gamma$ started at $0$ and then run an independent random walk started at $w$ until it 
hits $\gamma$ and then erase its loops. By Wilson's algorithm, the resulting path 
from $0$ to $w$ has the same distribution as $\gamma(0,w)$.

By Lemma \ref{condesc}, the probability that $\gamma$ reenters $B_{k^{2/3}\abs{w}}$ 
after leaving $B_{k\abs{w}}$ is less than $C k^{-1/3}$. Furthermore, by the discrete Beurling estimates \cite[Proposition 6.8.1]{LL08}, 
$$ \Pro{w}{\sigma_{k^{2/3}\abs{w}} < \xi_\gamma} \leq C (k^{2/3})^{-1/2} = C k^{-1/3}.$$
Therefore, 
$$ \Pro{}{\gamma(0.w) \not\subset B_{k\abs{w}}} \leq C k^{-1/3}.$$

To prove the lower bound, we follow the method of proof of \cite[Theorem 14.3]{BLPS} 
where it was shown that if $v$ and $w$ are nearest neighbors then 
$$\Pro{}{{\rm diam} \ \gamma(v,w) \geq n} \geq \frac{1}{8n}.$$
If $w = (w_1,w_2)$, let $u = (w_1-w_2,w_1+w_2)$ and $v = (-w_2,w_1)$ so that $\{0,w,u,v\}$ form four 
vertices of a square of side length $\abs{w}$. Now consider the sets
\begin{align*}
Q_1 = \{j w : j=0,\ldots,2k\} &\quad Q_2 = \{2kw+j(u-w): j=0,\ldots,2k\} \\
Q_3 = \{j v : j=0,\ldots,2k\} &\quad Q_4 = \{2kv+j(u-v): j=0,\ldots,2k\} 
\end{align*}
and let $Q = \bigcup_{i=1}^4 Q_i$. Then $Q$ consists of $8k$ lattice points on the perimeter of a 
square of side length $2k\abs{w}$. Let $x_1, \ldots, x_{8k}$ be the ordering of these points obtained 
by letting $x_1 = 0$ and then travelling along the perimeter of the square clockwise. 
Thus $\abs{x_{i+1} - x_{i}} = \abs{w}$. Now consider any spanning tree $U$ on $\bZ^2$. 
If for all $i$, $\gamma(x_i,x_{i+1})$ stayed in the ball $B(x_i,k\abs{w})$ then the concatenation of 
these paths would be a closed loop, which contradicts the fact that $U$ is a tree. Therefore,
$$ 1 = \Pro{}{\exists i : \gamma(x_{i},x_{i+1}) \not\subset B(x_{i},k \abs{w})} 
\leq \sum_{i=1}^{8k} \Pro{}{\gamma(x_{i},x_{i+1}) \not\subset B(x_{i},k \abs{w})}.$$
Finally, using the fact that $\bZ^2$ is transitive and is invariant under rotations by $90$ degrees, all the probabilities on the right hand side are equal. This proves the lower bound. \qed

\end{proof}

\begin{proposition} \label{p:ProMw<} 
For all $\eps > 0$, there exist $c(\eps), C(\eps) > 0$ and $\lam_0(\eps) \geq 1$
such that for all $w \in \bZ^2$ and all $\lambda \geq 1$, 
\begin{equation}
\Pro{}{d(0,w) > \lambda G(\abs{w})} \leq C(\eps) \lambda^{-4/15 + \eps},
\end{equation}
and for all $w \in \bZ^2$ and all $\lambda \geq \lambda_0(\eps)$, 
\begin{equation} 
\Pro{}{d(0,w) > \lambda G(\abs{w})} \geq c(\eps) \lambda^{-4/5 - \eps}.
\end{equation} 
\end{proposition}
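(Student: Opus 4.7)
The proof naturally splits into the two bounds, both obtained by reducing via Wilson's algorithm to the LERW estimates of Section \ref{sect:LERW}. Throughout I write $R=k\abs{w}$ for a scale parameter $k\ge 1$ to be optimized.

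For the upper bound, I decompose on whether $\gamma(0,w)$ escapes the Euclidean ball $B_R$:
\[
 \Pro{}{d(0,w)>\lambda G(\abs{w})} \le \Pro{}{\gamma(0,w)\not\subset B_R} + \Pro{}{\gamma(0,w)\subset B_R,\ d(0,w)>\lambda G(\abs{w})}.
\]
The first term is at most $Ck^{-1/3}$ by Lemma \ref{l:Losubset}. On the second event the random point $Y_w$ of Proposition \ref{p:expbnd} coincides with $w$, so $d(0,Y_w)=d(0,w)$; applying Proposition \ref{p:expbnd} at scale $n=R$ together with Lemma \ref{l:vgrwth} (which gives $G(R)\le Ck^{5/4+\eps}G(\abs{w})$) bounds that term by $C\exp(-c\lambda/k^{5/4+\eps})$. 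Choosing $k=\lambda^{4/5-\eps'}$ with $\eps'>(16/25)\eps$ makes the exponential term super-polynomially small, leaving the polynomial term $Ck^{-1/3}\le C\lambda^{-4/15+\eps}$ as the dominant contribution.

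For the lower bound I take $k=c_0\lambda^{4/5+\eps}$, large enough that $G(R)\ge c_1\lambda G(\abs{w})$ with $c_1=k^\delta$ for a small $\delta>0$; this is possible by Lemma \ref{l:vgrwth}. Theorem \ref{t:expLERW}(4) then gives $\Pro{}{\wh M_R<\lambda G(\abs{w})}\le C\exp(-ck^{\delta(4/5-\eps)})=o(1/k)$. Next I generate the UST via Wilson's algorithm by first sampling the infinite LERW $\gamma(0,\infty)$ and then running an independent SRW from $w$, loop-erased to give a path $L$ that meets $\gamma(0,\infty)$ at some point $z$; the tree path is then $\gamma(0,w)=\gamma(0,z)\cup L$, where $\gamma(0,z)$ denotes the initial segment of $\gamma(0,\infty)$ up to $z$. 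On the event $\mathrm{A}:=\{\abs{z}\ge R\}$ the segment $\gamma(0,z)$ contains the part of $\gamma(0,\infty)$ up to its first exit $\wh\sigma_R$ of $B_R$, so $d(0,w)\ge \wh M_R$; combined with $\{\wh M_R\ge\lambda G(\abs{w})\}$ this forces $d(0,w)\ge\lambda G(\abs{w})$.

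It remains to show $\Pro{}{\mathrm{A}}\ge c/k$. By Lemma \ref{l:Losubset}, $\Pro{}{\gamma(0,w)\not\subset B_R}\ge 1/(8k)$, and this event decomposes as $\mathrm{A}$ together with the event $\mathrm{B}$ that $\abs{z}<R$ but the loop-erased segment $L$ nevertheless produces a vertex outside $B_R$. I would bound $\Pro{}{\mathrm{B}}$ by combining Lemma \ref{condesc} (to suppress re-entries of $\gamma(0,\infty)$ into the inner ball after its first exit of $B_R$) with a planar harmonic-measure argument showing that once the SRW from $w$ escapes $B_R$ it is unlikely to return and meet $\gamma(0,\infty)\cap B_R$ before meeting the outer tail. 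The main obstacle is precisely this last step: a direct Beurling estimate gives only $O(k^{-1/2})$ for the probability that the SRW from $w$ escapes $B_R$ before hitting $\gamma(0,\infty)$, which is much larger than the $1/k$ we can afford; one must exploit the additional constraints that after escape the SRW has to both re-enter $B_R$ and meet the inner portion of $\gamma(0,\infty)$ before meeting the outer tail, and that the outside excursion must survive loop-erasure, to gain the extra factor needed to push $\Pro{}{\mathrm{B}}$ below $1/(16k)$ and conclude $\Pro{}{\mathrm{A}}\ge 1/(16k)=c\lambda^{-4/5-\eps}$.
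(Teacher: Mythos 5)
Your upper bound argument is essentially the paper's: decompose on whether $\gamma(0,w)$ escapes $B_{k|w|}$, bound the escape term by Lemma~\ref{l:Losubset}, and kill the complementary term with Proposition~\ref{p:expbnd} after rescaling $G$ via Lemma~\ref{l:vgrwth}. That part is fine.

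The lower bound has a genuine gap, which you yourself flag but do not resolve. You restrict to the event $\mathrm{A}=\{|z|\ge R\}$, where $z=Z_{0w}$ is the meeting point of $\gamma(0,\infty)$ and $\gamma(w,\infty)$, and then try to show $\Pro{}{\mathrm{A}}\ge c/k$ by subtracting off the ``bad'' event $\mathrm{B}$ from the $1/(8k)$ lower bound of Lemma~\ref{l:Losubset}. Two problems. First, the decomposition is incomplete: $\{\gamma(0,w)\not\subset B_R\}$ can also occur when $|z|<R$, $L\subset B_R$, but the segment of $\gamma(0,\infty)$ from $0$ to $z$ itself escapes $B_R$; your $\mathrm{A}\cup\mathrm{B}$ misses this case. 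Second, and more fundamentally, even with the right decomposition, the subtraction strategy requires showing the complement of the useful event has probability $<1/(16k)$, and as you note Beurling gives only $O(k^{-1/2})$ — there is no obvious way to close that gap, and you do not supply one. The paper avoids this entirely. It works with the weaker event $\{\gamma(0,Z_{0w})\not\subset B_{(k-1)|w|}\}$ (not $\{|Z_{0w}|\ge R\}$), and lower-bounds its probability by a symmetry argument rather than a subtraction: since $\gamma(0,w)$ is the concatenation of $\gamma(0,Z_{0w})$ and $\gamma(w,Z_{0w})$, Lemma~\ref{l:Losubset} gives $\Pro{}{\gamma(0,Z_{0w})\not\subset B_{k|w|}\text{ or }\gamma(w,Z_{0w})\not\subset B_{k|w|}}\ge 1/(8k)$; then transitivity of $\bZ^2$ shows $\gamma(0,Z_{0,-w})$ and $\gamma(w,Z_{0w})-w$ are equidistributed, so a union bound yields $\Pro{}{\gamma(0,Z_{0w})\not\subset B_{(k-1)|w|}}\ge 1/(16k)$. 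On that event $d(0,Z_{0w})\ge\wh M_{(k-1)|w|}$, and \eqref{eq:lowtailinf} finishes the job. This symmetry step is the key idea missing from your proposal; without it (or a genuine replacement for your hand-waved harmonic-measure estimate), the lower bound is not proved.
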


\begin{proof}
To prove the upper bound, let $k = \lambda^{4/5 - 3\eps}$. 
Then by Lemma \ref{l:vgrwth}, there exists $C(\eps) < \infty$ such that
\begin{equation} \label{e:ProMw1}
G(k\abs{w}) \leq C(\eps) k^{5/4 + \eps} G(\abs{w}) \leq C(\eps) \lambda^{1 - \eps} G(\abs{w}).
\end{equation}
 Then,
\begin{equation*}
\Pro{}{d(0,w) > \lambda G(\abs{w})} 
\leq \Pro{}{\gamma(0,w) \not\subset B_{k \abs{w}}} 
+ \Pro{}{d(0,w) > \lambda G(\abs{w}); \gamma(0,w) \subset B_{k \abs{w}}}.
\end{equation*}
However, by Lemma \ref{l:Losubset},
\begin{equation}
\Pro{}{\gamma(0,w) \not\subset B_{k \abs{w}}} \leq C k^{-1/3} = C \lambda^{-4/15 + \eps},
\end{equation}
while by Proposition \ref{p:expbnd} and \eqref{e:ProMw1},
\begin{align*}
\Pro{}{d(0,w) > \lambda G(\abs{w}); \gamma(0,w) \subset B_{k \abs{w}}} 
&\leq \Pro{}{d(0,w) > c(\eps) \lambda^{\eps} G(k\abs{w}); \gamma(0,w) \subset B_{k \abs{w}}} \\
&\leq C \exp(-c(\eps) \lambda^{\eps}).
\end{align*}
Therefore,
\begin{equation}
\Pro{}{d(0,w) > \lambda G(\abs{w})} \leq C \exp(-c(\eps) \lambda^{\eps}) 
+ C \lambda^{-4/15 + \eps} \leq C(\epsilon) \lambda^{-4/15 + \eps}.
\end{equation}  

\ms
To prove the lower bound we fix $k = \lambda^{4/5 + \eps}$ and assume $k \geq 2$ and $\epsilon < 1/4$. 
Then by Lemma \ref{l:vgrwth}, there exists $C(\eps) < \infty$ such that
\begin{equation*}
G((k-1)\abs{w}) \geq C(\eps)^{-1} k^{5/4 - \eps} G(\abs{w}) \geq C(\eps)^{-1} \lambda^{1 + \eps/3} G(\abs{w}).
\end{equation*}
Hence, 
$$ \Pro{}{d(0,w) > \lambda G(\abs{w})} \geq \Pro{}{d(0,w) > C(\eps) \lambda^{-\eps/3} G((k-1)\abs{w})}.$$

Now consider the UST on $\bZ^2$ and recall that $\gamma(0,\infty)$ and $\gamma(w,\infty)$ denote 
the infinite paths starting at $0$ and $w$. We write $Z_{0w}$ for the unique point where these 
meet: thus 
$\gamma(Z_{0w},\infty) = \gamma(0,\infty) \cap \gamma(w,\infty)$. 
Then $\gamma(0,w)$ is the concatenation of $\gamma(0,Z_{0w})$ and $\gamma(w, Z_{0w})$. 
By Lemma \ref{l:Losubset},
$$ \Pro{}{\gamma(0,w) \not\subset B_{k\abs{w}}} \geq \frac{1}{8k}.$$
Therefore, 
$$ \Pro{}{\gamma(0,Z_{0w}) \not\subset B_{k\abs{w}} \quad \text{or} 
\quad \gamma(w,Z_{0w}) \not\subset B_{k\abs{w}}} \geq \frac{1}{8k}.$$ 
By the transitivity of $\bZ^2$, the paths
$\gam(0, Z_{0, -w})$ and $\gam(w, Z_{0w})-w$ have the same
distribution, and therefore
$$ \Pro{}{\gamma(0,Z_{0w}) \not\subset B_{(k-1)\abs{w}}} \geq \frac{1}{16k}.$$

Since $Z_{0w}$ is on the path $\gamma(0,\infty)$, by \eqref{eq:lowtailinf},
\begin{align*}
& \Pro{}{d(0,w) > C(\eps) \lambda^{-\eps/3} G((k-1)\abs{w})} \\
& \q \geq \Pro{}{d(0,Z_{0w}) > C(\eps)\lambda^{-\eps/3} G((k-1)\abs{w})} \\
& \q \geq \Pro{}{\wh{M}_{(k-1)\abs{w}} > C(\eps) \lambda^{-\eps/3} G((k-1)\abs{w}); \gamma(0,Z_{0w}) \not\subset B_{(k-1)\abs{w}}} \\
& \q \geq \Pro{}{\gamma(0,Z_{0w}) \not\subset B_{(k-1)\abs{w}}} 
   - \Pro{}{\wh{M}_{(k-1)\abs{w}} < C(\eps) \lambda^{-\eps/3} G((k-1)\abs{w})} \\
&\q \geq \frac{1}{16k} - C \exp\{-c \lambda^{\epsilon/4}\}.
\end{align*} 
Finally, since $k = \lam^{4/5+\eps}$, the previous quantity can be made greater than 
$c(\eps) \lambda^{-4/5 - \eps}$ for $\lam$ sufficiently large. \qed
\end{proof}

\section{ Uniform spanning trees} 

\label{sect:UST}

\ms We recall that $\sU$ denotes the UST in $\bZ^2$, and we write $x \sim y$
if $x$ and $y$ are joined by an edge in $\sU$.

Let $\sE$ be the quadratic form given by
\begin{equation}
      \sE(f,g)=\fract12 \sum_{x \sim y} (f(x)-f(y))(g(x)-g(y)),
\end{equation}
If we regard $\sU$ as an
electrical network with a unit resistor on each edge, then
$\sE(f,f)$ is the energy dissipation when the vertices of $\bZ^2$ are at
a potential $f$.  Set $H^2=\{ f: \bZ^2 \to \bR:  \sE(f,f)<\infty\}$.
Let $A,B$ be disjoint subsets of $G$.  The effective resistance
between $A$ and $B$ is defined by:
\begin{equation}
\label{3.3bk}
    \Reff(A,B)^{-1}=\inf\{\sE(f,f): f\in H^2, f|_A=1, f|_B=0\}.
\end{equation}
Let $\Reff(x,y)=\Reff(\{x\},\{y\})$, and $\Reff(x,x)=0$.
For general facts on  effective resistance and its connection with
random walks see \cite{AF09,DS84,LP09}.

\bigskip

In this section, we establish the volume and effective resistance
estimates for the UST $\sU$ that will be used in the next section to
study random walks on $\sU$.

\begin{thm} \label{t:Vub}
There exist positive constants $C$ and $c$ such that for all $r \ge 1$ and $\lam > 0$,\\ 
(a)
\begin{equation}\label{e:Bdtail}
 \Pro{}{ B_d(0, \lam^{-1} G(r) ) \not\subset  B(0, r)}  \le C e^{-c \lam^{2/3}}. 
 \end{equation} 
(b)
\begin{equation} \label{e:Bdtail2}
\Pro{}{ \Reff(0, B(0,r)^c)<  \lam^{-3} G(r) } \le C e^{-c \lam^{2/3}}.
\end{equation}
\end{thm}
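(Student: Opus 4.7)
\medskip
\noindent\textbf{Proof proposal.}
The plan is to prove (a) by combining Wilson's algorithm with a chaining argument built on the LERW length estimates of Section~\ref{sect:LERW}, and then to deduce (b) from (a) using the Dirichlet-form lower bound on effective resistance in a tree, together with a volume control on $B_d(0,K)$.

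For (a), the starting observation is that $B_d(0,\lam^{-1}G(r))\not\subset B(0,r)$ is equivalent to the existence of a vertex $w\in\partial B(0,r)$ with $d(0,w)\le K:=\lam^{-1}G(r)$. By Wilson's algorithm, $d(0,w)$ has the distribution of $M_D$ for $D=\bZ^2\setminus\{w\}$, and since $D\supset B_r$, Theorem~\ref{t:expLERW}(2) gives the pointwise bound $\bP(d(0,w)\le K)\le C(\eps)e^{-c(\eps)\lam^{4/5-\eps}}$ for each fixed $w$. Since $|\partial B(0,r)|\sim r$, a direct union bound is unaffordable, so I would place a net $\{w_i\}_{i=1}^N$ of $N=\lam^\beta$ equally spaced points on $\partial B(0,r)$ and, for each $w\in\partial B(0,r)$, use the tree triangle inequality $d(0,w)\ge d(0,w_i)-d(w,w_i)$ at the nearest net point $w_i$. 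The first term on the right is controlled by the pointwise bound followed by a union bound over the $N$ net points; the second is controlled by Proposition~\ref{p:expbnd} applied at origin $w$ (which gives an exponential tail for $d(w,Y_{w_i})$ provided $\gamma(w,w_i)$ stays inside a small ball around $w$), together with Lemma~\ref{l:Losubset} for the ball-containment event. The main obstacle is that Lemma~\ref{l:Losubset} has only polynomial decay $k^{-1/3}$, so the three sources of error---the net size $N$, the pointwise rate $e^{-c\lam^{4/5-\eps}}$, and the containment rate $k^{-1/3}$---must be balanced carefully; this balancing is what degrades the naive exponent $4/5$ down to the stated $2/3$.

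For (b), I would apply the test function $f(v)=(1-d(0,v)/K)_+$ in the Dirichlet characterization of resistance on $\sU$; each edge in $B_d(0,K)$ contributes at most $(1/K)^2$ to $\sE(f,f)$, yielding
\[
\Reff(0,B_d(0,K)^c)\ge \frac{K^2}{|B_d(0,K)|}.
\]
On the event $B_d(0,K)\subset B(0,r)$ from (a) with $K=\lam_1^{-1}G(r)$, this is also a lower bound on $\Reff(0,B(0,r)^c)$, so it remains to upper-bound $|B_d(0,K)|$. In the regime where $\lam$ is large enough that $\lam G(r)\gtrsim r^2$, the crude estimate $|B_d(0,K)|\le |B(0,r)|\le Cr^2$ already suffices; for the remaining regime I would use a first-moment/Chebyshev argument summing the pointwise estimates $\bP(d(0,v)\le K)$ from Proposition~\ref{p:ProMw<} over $v\in B(0,r)$. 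Choosing $\lam_1$ comparable to $\lam$ and taking the failure probabilities at the same level $e^{-c\lam^{2/3}}$ for both the containment event from (a) and the volume event yields the stated resistance bound; the widening of the constant from $\lam^{-1}$ in (a) to $\lam^{-3}$ in (b) reflects the two extra powers of $\lam$ incurred in converting a distance bound into a resistance bound through a volume factor.
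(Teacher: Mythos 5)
Your proof has fundamental gaps in both parts, and does not track the paper's actual argument.

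\textbf{Part (a).} You propose a single-scale net of $N=\lam^\beta$ points on $\partial B(0,r)$ combined with the triangle inequality $d(0,w)\ge d(0,w_i)-d(w,w_i)$. The difficulty is the second term: to conclude $d(0,w)>\lam^{-1}G(r)$ for every $w\in\partial_i B(0,r)$ you must control $d(w,w_i)$ simultaneously for all $\sim r$ boundary points $w$, not just the $N$ net points. Lemma~\ref{l:Losubset} is a \emph{pointwise} estimate with only $k^{-1/3}$ decay, so a union bound over the arc between $w_i$ and $w_{i+1}$ yields a term of order $r\cdot k^{-1/3}$, which cannot be made small uniformly in $r$ by any choice of $k=k(\lam)$. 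This is precisely why the paper does not use a single-scale net: Theorem~\ref{t:Vub}(a) is proved by a multi-scale covering of annuli $A_k$ near $\partial B(0,r)$ with finer nets $D_k$ of mesh $\delta_k r=\lam^{-1}2^{-k}r$, building up $\sU$ by Wilson's algorithm from $D_1,D_2,\dots$, and exploiting that once $\sU_k$ is $\delta_k r$-dense in $A_k$, a random walk started in $D_{k+1}$ hits $\sU_k$ before penetrating inward by $(\eta_k-\eta_{k+1})r$ except with probability $\exp(-c(\eta_k-\eta_{k+1})/\delta_k)$. That exponential escape estimate at every scale (not the LERW length bound, which is only used once, for the coarsest net $D_1$) is what makes the union over scales summable.

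\textbf{Part (b).} The test-function argument $\Reff(0,B_d(0,K)^c)\ge K^2/|B_d(0,K)|$ is structurally too weak. Since $d_f(\sU)=8/5$, Theorem~\ref{t:main2} gives $|B_d(0,K)|\asymp g(K)^2\approx K^{8/5}$, so your lower bound is of order $K^{2/5}$, whereas the claim (and the paper's result) is of order $K$ up to powers of $\lam$ — a gap of $K^{3/5}$. Your ``crude'' regime $\lam G(r)\gtrsim r^2$ requires $\lam\gtrsim r^{3/4}$, so it never covers fixed $\lam$ and large $r$; and a first-moment bound on $|B_d(0,K)|$ (which in any case needs the \emph{lower}-tail estimate of Theorem~\ref{t:expLERW}(2), not Proposition~\ref{p:ProMw<} as you cite) only improves the exponent polynomially and still falls short by a power of $r$. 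The point you are missing is that the tree structure yields far better cutsets than the test function sees: the paper applies Nash--Williams to $\Gam_k=\{z:\ d(0,z)=k,\ z \text{ connects to } B(0,r)^c \text{ in } \{z\}\cup(\sU\setminus\gam(0,z))\}$, and on the good event from (a) every such $z$ lies on a path from $0$ to one of the $|D_1|\le C\lam^2$ coarse net points, so $|\Gam_k|\le C\lam^2$ \emph{independently of $k$}. Summing $|\Gam_k|^{-1}$ over $1\le k\le\lam^{-1}G(r)$ gives $\Reff\ge c\lam^{-3}G(r)$. No volume estimate for $B_d(0,K)$ is used, and indeed none of the right strength is available.
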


\begin{proof} By adjusting the constants $c$ and $C$ we can assume $\lam \ge 4$.
For $k \ge 1$, let $\delta_k = \lam^{-1} 2^{-k}$, and  $\eta_k=(2k)^{-1}$. 
Let $k_0$ be the smallest integer such that $r \delta_{k_0} < 1$. 
Set
$$ A_k = B(0, r) - B(0, (1 - \eta_k)r ), \qq k \ge 1. $$
Let $D_k$ be a finite collection of points in $A_k$ such that $|D_k|  \le   C \delta_k^{-2}$ and
\begin{align*}
A_k &\subset \bigcup_{ z \in D_k} B(z, \delta_k r).
\end{align*}

Write $\sU_1, \sU_2, \dots$ for the random trees obtained by running Wilson's algorithm
(with root $0$) with walks first starting at all points in $D_1$, then adding those points
in $D_2$, and so on. 
So $\sU_k$ is a finite tree which contains $\bigcup_{i=1}^k D_i \cup \{ 0\}$, and 
the sequence $(\sU_k)$ is increasing. 
Since $r \delta_{k_0} < 1$ we have $\pd_i B(0,r) \subset A_{k_0} \subset \sU_{k_0} $. 
We then complete a UST $\sU$ on $\bZ^2$ by applying Wilson's algorithm to the remaining points
in $\bZ^2$. 

For $z \in D_1$, let
$N_z$ be the length of the path $\gam(0,z)$ until it first exits from $B(0,r/8)$. By first applying \cite[Proposition 4.4]{Mas09} and then \eqref{eq:lowtailinf},
$$ \bP( N_z  < \lam^{-1} G(r)) \le C \bP( \wh{M}_{r/8}  < \lam^{-1} G(r)) \le C e^{-c \lam^{2/3}}, $$
so if 
$$ \wt F_1 = \{  N_z <  \lam^{-1} G(r) \hbox{ for some  $z \in D_1$ } \} 
= \bigcup_{z \in D_1} \{  N_z  < \lam^{-1} G(r) \}, $$
then 
\begin{equation}
 \bP( \wt F_1 ) \le |D_1|  C e^{-c\lam^{2/3}} \le  C  \delta_1^{-2} e^{-c \lam^{2/3}}
 \le C \lam^2 e^{-c \lam^{2/3}}. 
 \end{equation}
For $z \in A_{k+1}$, let $H_z$ be the event that the path $\gam(z,0)$ enters $B(0,(1 - \eta_k)r)$ before
it hits $\sU_k$. For $k\ge 1$, let
$$ F_{k+1} = \bigcup_{ z \in D_{k+1}} H_z. $$
Let $z \in D_{k+1}$ and $S^z$ be a simple random walk started at $z$ and run until
its hits $\sU_k$. Then by Wilson's algorithm, for the event $H_z$ to occur, $S^z$ must enter $B(0,(1 - \eta_k)r)$ before it hits $\sU_k$. 
Since each point in $A_k$ is within a distance $\delta_k r$ of $\sU_k$, 
$\sU_k$ is a connected set, and $z$ is a distance at least
$(\eta_k- \eta_{k+1})r$ from $B(0,(1 - \eta_k)r)$, we have
$$ \Pro{}{H_z}  \le \exp( -c (\eta_k- \eta_{k+1})/\delta_k ). $$
Hence
\begin{equation}
\bP( F_{k+1} ) \le C \delta_{k+1}^{-2}  \exp( -c  (\eta_k- \eta_{k+1})/\delta_k )
 \le C \lam^2 4^k \exp( -c \lam  2^k k^{-2} ).
 \end{equation}
Now define $G$ by
$$ G^c = \wt F_1 \cup \bigcup_{k=2}^{k_0}  F_k, $$
so that 
\begin{align}\label{e:PGub}
 \Pro{}{G^c} \le  C   \lam e^{-c \lam^{2/3}} + 
\sum_{k=2}^{\infty} C \lam^2 4^k \exp( -c \lam 2^k k^{-2} )
  \le C e^{-c \lam^{2/3} }.
\end{align}

Now suppose that $\omega \in G$. Then we claim that: 
\begin{shortitemize}
\item[(1)] For every $z \in D_1$ the part of the path $\gam(0,z)$ 
until its first exit from $B(0,r/2)$ is of length greater
than $\lam^{-1} G(r)$, 
\item[(2)] If $z \in D_k$ for any $k \ge 2$ then the path $\gam(z,0)$ hits $\sU_1$
before it enters $B(0,r/2)$.
\end{shortitemize}
Of these, (1) is immediate since $\omega \not\in \wt F_1$, while (2) follows by induction
on $k$ using the fact that $\omega \not\in F_k$ for any $k$.

Hence, if $\om \in G$, then $|\gam(0,z)| \ge \lam^{-1} G(r)$ for every
$z \in \pd_i B(0,r)$, which proves (a). 

\sm
To prove (b) we use the Nash-Williams bound for resistance \cite{NW}.
For $1\le k \le \lam^{-1} G(r)$ let $\Gam_k$ be the set of $z$ such that
$d(0,z)=k$ and $z$ is connected to $B(0,r)^c$ by a path in
$\{z\} \cup (\sU-\gam(0,z))$.  
Assume now that the event $G$ holds.
Then the $\Gam_k$ are disjoint
sets disconnecting $0$ and  $B(0,r)^c$, and so
$$ \Reff(0,  B(0,r)^c) \ge \sum_{k=1}^{\lam^{-1} G(r)} |\Gam_k|^{-1}. $$
Furthermore, each $z \in \Gam_k$ is on a path from $0$
to a point in $D_1$, and so $|\Gam_k| \le |D_1| \le C \delta_1^{-2} \le C \lam^2$.
Hence on $G$ we have 
$ \Reff(0, B(0,r)^c) \ge c \lam^{-3} G(r)$, which proves (b). \qed
\end{proof}

\ms
A similar argument will give a (much weaker) bound in the opposite direction. 
We begin with a result we will use to control the way
the UST fills in a region once we have constructed some initial paths. 

\begin{proposition} \label{p:fillin}
There exist positive constants $c$ and $C$ such that for each $\delta_0 \le 1$ the following holds. Let $r \geq 1$, and $U_0$ be a fixed tree in $\bZ^2$ connecting $0$ to $B(0,2r)^c$ with the property that 
${\rm dist}(x, U_0) \le \delta_0 r$
for each $x \in B(0,r)$ (here ${\rm dist}$ refers to the Euclidean distance).
Let $\sU$ be the random spanning tree in $\bZ^2$ obtained by running
Wilson's algorithm with root $U_0$. Then there exists an event
$G$ such that 
\begin{equation}\label{e:pfillub}
 \bP(G^c)  \le C e^{-c \delta_0^{-1/3}},
 \end{equation}
and on $G$ we have that for all  $x \in B(0,r/2)$,
\begin{align}\label{e:fillub}
 &d(x, U_0) \le  G( \delta_0^{1/2} r); \\
 \label{e:pathinc}
 &\gam(x, U_0) \subset B(0,r).  
 \end{align}
\end{proposition}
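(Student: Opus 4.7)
The plan is to prove Proposition \ref{p:fillin} by adapting the covering-plus-Wilson's-algorithm strategy used in Theorem \ref{t:Vub}. Set $n = c_0 \delta_0^{1/2} r$ for a small constant $c_0$ to be chosen, and let $D$ be a maximal $n$-separated subset of $B(0,r)$, so that $|D| \leq C \delta_0^{-1}$ and every point of $B(0,r)$ lies within Euclidean distance $n$ of $D$. Run Wilson's algorithm with root $U_0$, processing the points of $D$ first in arbitrary order; this produces an intermediate tree $\sU_1 \supset U_0 \cup D$. Writing $\gam_z$ for the LERW path added when $z \in D$ is processed, I would define the good event
\begin{equation*}
G = \bigcap_{z \in D} \Bigl\{ \gam_z \subset B(z, 4n) \Bigr\} \cap \bigcap_{z \in D} \Bigl\{ |\gam_z| \le \tfrac12 G(\delta_0^{1/2} r) \Bigr\},
\end{equation*}
and aim for a per-point probability bound of $Ce^{-c\delta_0^{-1/3}}$, which after a union bound over $|D|\le C\delta_0^{-1}$ terms yields \eqref{e:pfillub}.

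For the length bound on $\gam_z$, I would apply Theorem \ref{t:expLERW}(1) to the walk $S^z$ with domain $\wt D = \bZ^2 \setminus U_0$ and subset $\wt D' = B(z,4n) \cap \wt D$, using length parameter $n$. The geometric hypothesis asks that for each $y \in \wt D'$ there is a path in $U_0$ from $B(y,n+1)$ to $B(y,2n)^c$; since $U_0$ is $\delta_0 r$-dense in $B(0,r)$ and $\delta_0 r \ll n$, $U_0$ meets $B(y,n+1)$; since $U_0$ connects $0$ to $B(0,2r)^c$ and $y \in B(0,2r)$, $U_0$ meets $B(y,2n)^c$; and $U_0$ is a tree. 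The theorem therefore gives
\begin{equation*}
\Pro{}{|\gam_z \cap B(z,4n)| > \lambda G(n)} \le 2 e^{-c\lambda},
\end{equation*}
and by choosing $c_0$ so that $G(n) \le \delta_0^{1/3} G(\delta_0^{1/2}r)/4$ (possible by Lemma \ref{l:vgrwth}), one absorbs $\lambda = c\delta_0^{-1/3}$ into the bound $\tfrac12 G(\delta_0^{1/2}r)$.

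The main obstacle is the containment bound $\gam_z \subset B(z,4n)$. The discrete Beurling estimate, together with the fact that $U_0$ passes within $\delta_0 r$ of $z$ and extends out of $B(z,4n)$, only gives a polynomial bound $\Pro{}{S^z \text{ exits } B(z,4n) \text{ before hitting } U_0} \le C(\delta_0 r/n)^{1/2} = C\delta_0^{1/4}$. To upgrade this to the required exponential decay, I would use a multiscale iteration analogous to the proof of Theorem \ref{t:Vub}: introduce intermediate scales $\delta_0 r = s_0 < s_1 < \cdots < s_K = n$ with $s_{k+1}/s_k$ of constant ratio and $K \asymp \log(\delta_0^{-1/2})$, and at each scale exploit the density of $U_0$ to show that on a fresh annulus $B(z,s_{k+1})\setminus B(z,s_k)$ the walk meets $U_0$ with constant probability, so that all $K$ attempts must fail — giving an exponentially small bound. (An alternative would be to represent $\gam_z$ using Wilson's algorithm starting from an infinite LERW out of a nearby $U_0$-point and apply Lemma \ref{l:Losubset}, which also yields a polynomial bound that must be amplified.)

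Finally, to pass from the centers $z \in D$ to an arbitrary $x \in B(0,r/2)$, I would continue Wilson's algorithm to process $x$ against the current tree $\sU_1$. On $G$, the tree $\sU_1$ is contained in $B(0,r)$ and is $n$-dense there, so applying Theorem \ref{t:expLERW}(1) once more (now with $\sU_1$ in place of $U_0$) gives, with a bound of the same form, that $\gam(x,\sU_1) \subset B(x,n)$ and $|\gam(x,\sU_1)| \le \tfrac12 G(\delta_0^{1/2}r)$. Concatenating with $\gam_z$ for the point $z \in D$ where $\gam(x,\sU_1)$ meets $\sU_1$, we obtain \eqref{e:fillub} and \eqref{e:pathinc}. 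The same multiscale argument will control the containment step here, and the length estimate from $x$ only adds a constant multiple of $G(n) \le \tfrac12 G(\delta_0^{1/2}r)$ by choice of $c_0$.
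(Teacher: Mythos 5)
Your proposal has the right geometry in mind but contains two gaps that are not easily repaired, and a third misstatement of a cited result. First, the claimed exponential bound for the containment step $\gamma_z\subset B(z,4n)$ does not follow from the iteration you describe: with geometrically spaced scales $s_k$ from $\delta_0 r$ to $n=c_0\delta_0^{1/2}r$ you have $K\asymp\log(\delta_0^{-1/2})$ scales, and if each scale is only beaten with "constant probability" then you get $(1-c)^K\asymp\delta_0^{c}$, which is polynomial, not exponential. What actually gives an exponential bound (and is what the paper uses in \eqref{e:p41b}) is that the walk must cross a macroscopic distance $d$ while $U_0$ is $\delta r$-dense, giving roughly $d/(\delta r)$ independent constant-probability chances to hit $U_0$, hence a bound of the form $\exp(-cd/(\delta r))$; the decay comes from counting obstacles, not from counting geometric scales. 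Second, and more seriously, your two-level scheme cannot cover the last step uniformly. After processing the $n$-net $D$, the tree $\mathcal{U}_1$ is only $n$-dense, and controlling $\gamma(x,\mathcal{U}_1)$ for an arbitrary $x\in B(0,r/2)$ costs at best $e^{-c\delta_0^{-1/3}}$ per point, a bound that is independent of $r$ — so a union bound over the $\asymp r^2$ lattice points $x$ gives $r^2 e^{-c\delta_0^{-1/3}}$, which is useless for $r$ large. The paper avoids exactly this by using an infinite sequence of nets $D_k$ at scales $\delta_k r = 2^{-k}\delta_0 r$ inside nested annular regions $A_k\downarrow B(0,r/2)$; once $\delta_k r<1$ the net $D_k$ contains every lattice point of $A_k$, so every $x\in B(0,r/2)$ is actually a point of some $D_k$ and there is nothing left to union over — the summability of $\sum_k \delta_k^{-2}\bigl[\exp(-c\delta_{k-1}^{-1}k^{-2})+\exp(-c\lambda_k^{2/3})\bigr]$ is what produces the final $e^{-c\delta_0^{-1/3}}$.

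Third, and more minor, you invoke Theorem \ref{t:expLERW}(1) to conclude $\gamma(x,\mathcal{U}_1)\subset B(x,n)$, but that theorem only bounds the \emph{length} $M_{D',D}$ of the restricted loop-erasure; it makes no containment claim, so it cannot deliver \eqref{e:pathinc}. In the paper, containment is handled separately and crucially, via the events $F_z=\{\gamma(z,\mathcal{U}_{k-1})\not\subset A_{k-1}\}$, and the length bound \eqref{eq:uptail} is applied only conditionally on $F_z^c$; \eqref{e:pathinc} then follows deterministically because on $G$ every LERW branch stays in $A_{k-1}\subset B(0,r)$. If you want to salvage your approach you should replace the two-level net by the paper's telescoping sequence of nets, handle containment with the obstacle-counting bound rather than the scale-counting bound, and keep containment and length as separate events rather than trying to extract both from Theorem \ref{t:expLERW}(1).
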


\begin{proof}
We follow a similar strategy to that in Theorem \ref{t:Vub}.
Define sequences $(\delta_k)$ and $(\lam_k)$ by
$\delta_k = 2^{-k} \delta_0$, $\lam_k =  2^{k/2} \lam_0$,
where $\lam_0 =   5^{-1} \delta_0^{-1/2}$.
For $k \ge 0$, let 
$$A_k = B(0, \fract12 (1+ (1+k)^{-1})r ),$$
and let $D_k \subset A_k$ be such that for $k\ge 1$,
\begin{align*}
\abs{D_k} \leq C \delta_k^{-2} , \\
A_k \subset \bigcup_{z \in D_k} B(z, \delta_k r).
\end{align*}

Let $\sU_0 = U_0$ and as before let $\sU_1, \sU_2, \ldots$ be the random trees obtained by 
performing Wilson's algorithm with root $U_0$ and starting first at points in $D_1$,
then in $D_2$ etc.  
Set
\begin{align*}
M_z &= d(z, \sU_{k-1}), \q z \in D_k, \\
F_z &= \{ \gamma(z, \mathcal{U}_{k-1}) \not\subset A_{k-1} \}, \q z \in D_k, \\
M_k &= \max_{z \in D_k} M_z, \\
F_k &= \bigcup_{z \in D_k} F_z.
\end{align*}

For $z \in D_k$, 
\begin{align} \label{e:p41a}
\Pro{}{ M_z > \lam_k G(\delta_{k-1} r) }  \leq \Pro{}{ F_z } 
+ \Pro{}{ M_z > \lam_k G(\delta_{k-1} r)  ; F_z^c } . 
\end{align}
Since $z$ is a distance at least $\fract12 r ( k^{-1} - (k+1)^{-1})$ from $A_{k-1}^c$,
and each point in $A_{k-1}$ is within a distance $\delta_{k-1} r$ of $\sU_{k-1}$,
\begin{align}  \label{e:p41b}
\Pro{}{ F_z } 
\leq C\exp(-c \delta_{k-1}^{-1} ( k^{-1} - (k+1)^{-1})) 
 \leq C\exp(-c \delta_{k-1}^{-1} k^{-2} ).
\end{align}
By \eqref{eq:uptail}, again using the fact that
each point in $A_{k-1}$ is within distance $\delta_{k-1} r$ of $\sU_{k-1}$,
\begin{align} \label{e:p41c}
\Pro{}{ M_z > \lam_k G(\delta_{k-1} r)  ; F_z^c } 
\leq C\exp(- c \lam_k^{2/3} ).
\end{align}
So, combining \eqref{e:p41a}--\eqref{e:p41c}, for $k \ge 1$, 
\begin{align} \label{e:mbnd}
\Pro{}{M_k >  \lam_k G(\delta_{k-1} r) } + \Pro{}{F_k} 
&\leq C \abs{D_k}  \left[   \exp(-c \delta_{k-1}^{-1} k^{-2} ) + \exp(- c \lam_k^{2/3} ) \right].
\end{align}

Now let 
\begin{equation}\label{e:Gdef}
 G = \bigcap_{k=1}^\infty F_k^c \cap \{ M_k \leq  \lam_k G(\delta_{k-1} r) \}.
\end{equation}
Summing the series given by \eqref{e:mbnd}, and using the bound
$|D_k| \le c \delta_k^{-2}$, we have
\begin{align*}
 \Pro{}{ G^c } 
  &\le C \delta_0^{-2}  \sum_{k}  2^{2k}
  \left[   \exp(-c \delta_{0}^{-1} 2^k k^{-2} ) + \exp(- c 2^{k/3} \delta_0^{-1/3} ) \right] \\
  &\le  C \delta_0^{-2}  e^{-c \delta_0^{-1/3}} \\
  &\le C e^{-c' \delta_0^{-1/3}}.
\end{align*}

Using Lemma \ref{l:vgrwth} with $\eps = \fract14$ gives 
$$ \lam_k G(\delta_{k-1} r ) \le \lam_k  \delta_0^{1/2} 2^{-(k-1)} G(\delta_0^{1/2} r)
= 2 \lam_0 \delta_0^{1/2} 2^{-k/2} G(\delta_0^{1/2} r). $$
So
$$ \sum_{k=1}^\infty \lam_k G(\delta_{k-1} r )  
\le   5 \lam_0 \delta_0^{1/2} G(\delta_0^{1/2} r) = G(\delta_0^{1/2} r). $$
Since $B(0,r/2) \subset \bigcap_k A_k$,
we have $B(0,r/2) \subset \bigcup_k \sU_k$. Therefore on the event $G$,
for any $x \in B(0,r/2)$, 
$d(x,U_0) \le  G(\delta_0^{1/2} r)$.
Further, on $G$, for each $z \in D_k$, we have $\gam(z, \sU_{k-1}) \subset A_{k-1}$. 
Therefore if $x \in B(0, r/2)$ the connected component of $\sU- U_0$ containing
$x$ is contained in $B(0,r)$, which proves \eqref{e:pathinc}.  \qed
\end{proof}

\begin{theorem} \label{t:Vlb}
 For all $\eps > 0$, there exist $c(\epsilon), C(\epsilon) > 0$ and $\lambda_0(\eps) \geq 1$ such that for all $r \ge 1$ and $\lambda \geq 1$, 
\begin{equation}\label{e:Bd-lb}
\bP \big( B(0, r)  \not\subset B_d(0, \lam G(r) \big) \leq C \lambda^{-4/15 + \eps},
\end{equation}
and for all $r \geq 1$ and all $\lambda \geq \lambda_0(\eps)$, 
$$ \bP \big( B(0, r)  \not\subset B_d(0, \lam G(r) \big) \geq c \lambda^{-4/5 - \eps}.$$
\end{theorem}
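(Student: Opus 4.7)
\medskip\noindent\textbf{Proof strategy.}
The lower bound follows almost immediately from Proposition \ref{p:ProMw<}. Fix any $w_0 \in \bZ^2$ with $|w_0| \in [r/2, r]$. Since $w_0 \in B(0,r)$, the event $\{d(0, w_0) > \lambda G(r)\}$ implies $B(0, r) \not\subset B_d(0, \lambda G(r))$. By Lemma \ref{l:vgrwth} there is a constant $C_0$ with $G(r) \le C_0 G(|w_0|)$, so $\{d(0, w_0) > \lambda G(r)\} \supset \{d(0, w_0) > C_0 \lambda G(|w_0|)\}$, and the lower bound of Proposition \ref{p:ProMw<} applied with parameter $C_0 \lambda$ yields the desired bound $c(\eps) \lambda^{-4/5 - \eps}$ for $\lambda \ge \lambda_0(\eps)$, after adjusting constants.

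For the upper bound, the plan is to combine a covering argument with Proposition \ref{p:fillin}. Let $\eps' > 0$ be an auxiliary parameter to be absorbed at the end, and set $\delta = \lambda^{-\eps'/2}$. Fix an anchor point $z_\infty \in \partial B(0, 5r)$, and choose a net $D \subset B(0, 2r)$ at scale $2\delta r$, so that $|D| \le C \delta^{-2}$. Using the order-invariance of Wilson's algorithm, generate $\sU$ by starting from the root $\{0\}$, processing first $z_\infty$ and then the points of $D$ to produce an intermediate tree $\sU_D \supset \{0\} \cup D \cup \{z_\infty\}$, and then completing $\sU$ from $\sU_D$. Then $\sU_D$ satisfies the deterministic hypotheses of Proposition \ref{p:fillin} (applied with ball radius $2r$ and $\delta_0 = \delta$): the path from $0$ to $z_\infty$ inside $\sU_D$ connects $0$ to a point of $B(0, 4r)^c$, and every point of $B(0, 2r)$ lies within $2\delta r$ of $D \subset \sU_D$. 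Hence, conditional on $\sU_D$, Proposition \ref{p:fillin} gives that with probability at least $1 - C e^{-c \delta^{-1/3}}$, every $w \in B(0, r)$ satisfies $d(w, \sU_D) \le G(\sqrt{\delta}\cdot 2r)$. By Lemma \ref{l:vgrwth} this is at most $C \delta^{5/8 - \eps'} G(r) \le \lambda G(r)/2$ for $\lambda$ sufficiently large.

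On the other hand, each $u \in D \cup \{z_\infty\}$ has $|u| \le 5r$, and Lemma \ref{l:vgrwth} gives $G(|u|) \le C G(r)$, so Proposition \ref{p:ProMw<} yields $\bP(d(0, u) > \lambda G(r)/2) \le C \lambda^{-4/15 + \eps'}$. A union bound over the $|D| + 1 \le C \delta^{-2}$ such points gives probability at most $C \delta^{-2} \lambda^{-4/15 + \eps'} = C \lambda^{-4/15 + 2\eps'}$ that some $u$ violates this. On the intersection of the two good events, for each $w \in B(0, r)$ let $z_w$ denote the unique point of $\sU_D$ on the $\sU$-path $\gam(0, w)$; then $d(0, w) = d(0, z_w) + d(z_w, w)$, with $d(z_w, w) = d(w, \sU_D) \le \lambda G(r)/2$. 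Since $z_w$ lies on the tree-path in $\sU_D$ from $0$ to some leaf of $\sU_D$, and the leaves of $\sU_D$ are contained in $D \cup \{z_\infty, 0\}$, we also get $d(0, z_w) \le \max_{u \in D \cup \{z_\infty\}} d(0, u) \le \lambda G(r)/2$. Hence $d(0, w) \le \lambda G(r)$, so $B(0, r) \subset B_d(0, \lambda G(r))$. Summing the two failure probabilities and renaming $\eps'$ gives the claimed upper bound; the trivial range of small $\lambda$ is absorbed by enlarging the constant.

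The main obstacle is controlling $d(0, z_w)$, since $z_w$ is a random point of $\sU_D$. The key trick is the single external anchor $z_\infty$: including the infinite LERW $\gam(0, \infty)$ in the skeleton would place $z_w$ potentially far along $\gam$ and lead to a weaker estimate (requiring extra control via Lemma \ref{condesc} of how late $\gam$ can re-enter $B(0, r)$), whereas the choice of $z_\infty$ keeps $\sU_D$ a finite tree whose leaves belong to the pre-specified finite set $D \cup \{z_\infty, 0\}$, so that $d(0, z_w)$ is automatically dominated by the maximum over this set and handled by a single union bound. The exponent $4/15$ emerges from the balance $\delta^{-2} \lambda^{-4/15+\eps'}$: one wants $\delta$ just large enough to survive the union bound, while keeping the filling-in term $G(\sqrt{\delta}\, r)$ well below $\lambda G(r)$.
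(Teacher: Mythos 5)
Your proof is correct and follows essentially the same strategy as the paper's: for the lower bound, apply Proposition~\ref{p:ProMw<} to a single point at Euclidean distance~$\approx r$; for the upper bound, fix a net of $\approx \lambda^{\eps'}$ points at scale $\lambda^{-\eps'/2} r$, bound $d(0,\cdot)$ at each net point by Proposition~\ref{p:ProMw<} plus a union bound (this is where the exponent $4/15$ comes from, after absorbing the cardinality of the net into a shrunken~$\eps$), and then fill in the rest of the ball using Proposition~\ref{p:fillin}, whose failure probability is super-polynomially small. The one place you deviate from the paper is the explicit anchor point $z_\infty\in\partial B(0,5r)$: the paper simply runs Wilson's algorithm from root $\{0\}$ through a net $E\subset B(0,4r)$ and then asserts that the resulting tree $\sU_0$ ``connects $0$ to $B(0,4r)^c$'' (which is needed to satisfy the stated hypothesis of Proposition~\ref{p:fillin}), without observing that the LERW paths from $E$ to $0$ need not leave $B(0,4r)$. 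Your anchor point makes this connectivity automatic and keeps all the leaves of $\sU_D$ in the pre-specified set $D\cup\{z_\infty,0\}$, so that $d(0,z_w)$ is dominated by a single union-bounded maximum — a modest but genuine tightening of the exposition, at the cost of one extra point in the net. One trivial slip: by the paper's convention $\partial B(0,5r)$ lies outside $B(0,5r)$, so $|z_\infty|$ is slightly larger than $5r$ rather than $\le 5r$; this is harmless since Lemma~\ref{l:vgrwth} still gives $G(|z_\infty|)\le C\,G(r)$.
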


\begin{proof}
The lower bound follows immediately from the lower bound in Proposition \ref{p:ProMw<}.

To prove the upper bound, let $E \subset B(0,4r)$ be such that $\abs{E} \leq C \lambda^{\eps/2}$ and 
$$ B(0,4r) \subset \bigcup_{z \in E} B(z, \lambda^{-\eps/4}r).$$
We now let $\sU_0$ be the random tree obtained by applying Wilson's algorithm with 
points in $E$ and root $0$. Therefore, by Proposition \ref{p:ProMw<}, for any $z \in E$, 
\begin{align*}
\bP \big(d(0,z) > \lam G(r)/2 \big) &\leq \bP \big( d(0,z) > c \lam G(\abs{z})/2 \big)  \\
&\leq C(\eps) \lambda^{-4/15+\eps/2}.
\end{align*}
Let 
$$ F = \{ d(0,z) \le \lam G(r)/2 \, \hbox{ for all } z \in E \};$$
then
$$ \bP(F^c) \le \abs{E} C(\eps) \lam^{-4/15 + \eps/2} \leq C(\eps) \lam^{-4/15 +  \eps}. $$
We have now constructed a tree $\mathcal{U}_0$ connecting $0$ to $B(0,4r)^c$ and by the definition of the set
$E$, for all $z \in B(0,2r)$, $\dist(z,\sU_0) \leq \lam^{-\eps/4}r$. 
We now use Wilson's algorithm to produce the UST $\mathcal{U}$ on $\bZ^2$ with root $\mathcal{U}_0$. 
Let $G$ be the event given by applying Proposition 3.2 (with $r$ replaced by $2r$), so that
$$ \Pro{}{G^c} \le C e^{-c \lam^{\eps/12}}. $$
On the event $G$ we have $d(x, \sU_0) \le G(\lam^{-\eps/2}r) \le \lam G(r)/2$ for all
$ x \in B(0,r)$. Therefore, on the event 
$F \cap G$ we have  $d(x,0) \le \lam G(r)$ for all
$ x \in B(0,r)$. 
 Thus, 
$$ \Pro{}{\max_{x \in B(0,r)} d(x,0) > \lam G(r)} \leq C(\eps) \lambda^{-4/15 +  \eps} 
+ C e^{-c \lam^{\eps/12}} \leq C(\eps) \lambda^{-4/15 + \eps}.$$ 
\qed
\end{proof}

\ms
Theorem \ref{t:main1} is now immediate from Theorem \ref{t:Vub} and Theorem \ref{t:Vlb}.

\ms

While Theorem \ref{t:Vub} immediately gives the exponential bound \eqref{e:vlubpii} on the upper tail of $| B_d(0,r)|$ in Theorem \ref{t:main2}, it only gives a polynomial bound for the lower tail. The following theorem gives an exponential bound on the lower tail of $|B_d(0,r)|$ and consequently proves Theorem \ref{t:main2}.  

\begin{theorem} \label{t:Vexplb}
There exist constants $c$ and $C$ such that if $R\ge 1$, $\lam \geq 1$ then
\begin{equation}\label{e:Vexplb} 
 \bP( |B_d(0, R)| \le \lam^{-1} g(R)^2 ) \le C e^{-c \lam^{1/9}}. 
\end{equation}
\end{theorem}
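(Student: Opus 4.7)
The plan is to show that $B(0,r) \subset B_d(0,R)$ with probability $\ge 1 - Ce^{-c\lam^{1/9}}$, where $r := c_1 \lam^{-1/2} g(R)$; since $|B(0,r)| \ge c r^2 \ge c_0 \lam^{-1} g(R)^2$ this yields \eqref{e:Vexplb}. We may assume $\lam \ge \lam_0$ for $\lam_0$ large, otherwise the bound is trivial. Set $\delta_0 := c_2 \lam^{-1/3}$ so that $\delta_0^{-1/3} \asymp \lam^{1/9}$. By Lemma~\ref{l:vgrwth}, $G(\delta_0^{1/2}\cdot 2r) \le C\lam^{-5/6+\eps} R$, which is much smaller than $R/2$.

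The strategy is to construct, via Wilson's algorithm, a subtree $U_0 \subset \sU$ which contains $0$, reaches $B(0,4r)^c$, is $\delta_0 r$-dense in $B(0,2r)$, and satisfies the additional intrinsic-distance bound $d_\sU(0,y) \le R/2$ for every $y \in U_0 \cap B(0,2r)$. Given such $U_0$, Proposition~\ref{p:fillin} (at scale $2r$, density $\delta_0$) applied to the remaining Wilson's steps yields a good event of probability $\ge 1 - Ce^{-c\lam^{1/9}}$ on which every $x \in B(0,r)$ has $\gam(x, U_0) \subset B(0, 2r)$ and $d_\sU(x, U_0) \le G(\delta_0^{1/2}\cdot 2r) \le R/2$. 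The endpoint $y^*$ of the path $\gam(x, U_0)$ lies in $U_0 \cap B(0,2r)$, whence $d(0,y^*) \le R/2$ by the intrinsic-distance bound and thus $d(0,x) \le R$.

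The construction of $U_0$ follows the multi-scale Wilson's procedure in the proof of Proposition~\ref{p:fillin}, but grown \emph{outward} from $\{0\}$. First, sample the infinite LERW $L_\infty := \gam(0,\infty)$ and let $L := L_\infty \cap B(0,8r)$; by \eqref{eq:uptailinf}, $|L| = \widehat M_{8r} \le R/4$ with probability $\ge 1 - Ce^{-c\lam^{5/8-\eps}}$, and $L$ connects $0$ to $B(0,4r)^c$. Since $L_\infty$ is a single infinite connected subset of $\bZ^2$, its complement is simply connected in the sense of Section~\ref{sect:LERW}, and this remains so after sequentially attaching finite loop-erased branches. Then process successive $\delta_k r$-nets $E_k \subset B(0, 4r)$ with $\delta_k := 2^{-(k-1)} \delta_0$ via Wilson's algorithm, using the current tree as root. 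At stage $k$, inductively $\dist(z, U_0^{(k-1)}) \le \delta_{k-1} r$ for $z \in E_k$, so Theorem~\ref{t:expLERW}(1), applied to the simply-connected complementary domain at scale $n = \delta_{k-1} r$ (after a translation placing $z$ at the origin), bounds each branch length by $\lam_k G(\delta_{k-1} r)$ with probability $\ge 1 - C\exp(-c\lam_k^{2/3})$. Taking $\lam_k := c 2^{k/2} \delta_0^{-1/2}$ and summing exactly as in the proof of Proposition~\ref{p:fillin}, the aggregate failure probability is bounded by $\sum_k |E_k| \cdot C\exp(-c\lam_k^{2/3}) \le C\exp(-c\delta_0^{-1/3}) = C\exp(-c\lam^{1/9})$; the total added intrinsic depth telescopes geometrically to $\sum_k \lam_k G(\delta_{k-1} r) \le C\delta_0^{1/8-\eps} G(\delta_0^{1/2} r) \le C\lam^{-7/8+\eps} R \le R/4$. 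Combined with $|L| \le R/4$, this gives the required $d_\sU(0,y) \le R/2$ for $y \in U_0 \cap B(0,2r)$.

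The principal technical obstacle is the first stage $k=1$: because the spine $L$ has Hausdorff dimension $5/4$, it is not $\delta_0 r$-dense in $B(0,2r)$ and the inductive hypothesis $\dist(z, U_0^{(k-1)}) \le \delta_{k-1} r$ does not hold directly for $k=1$. Circumventing this requires either a Beurling-type bound (cf.\ Lemma~\ref{condesc}) on $\dist(z, L)$ with exponential tails, or inserting a coarse seeding step that uses Proposition~\ref{p:expbnd} in the simply-connected domain $\bZ^2 \setminus L_\infty$ to thicken $L$ into a tree $U_0^{(0)}$ with the initial density needed to launch the inductive scaling. Balancing the density of the seed net against the aggregate per-seed tail, in order to retain the target exponent $\lam^{1/9}$, is the chief technical task.
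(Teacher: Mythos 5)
The plan is not salvageable because it contradicts a lower bound already established in the paper. You propose to show that $B(0,r)\subset B_d(0,R)$ with probability at least $1-Ce^{-c\lam^{1/9}}$, where $r=c_1\lam^{-1/2}g(R)$. But by Lemma \ref{l:vgrwth}, $G(r)\le C\lam^{-5/8+\eps}R$, so $R/G(r)\le C\lam^{5/8+\eps}$; and by the second half of Theorem \ref{t:main1}(b) (equivalently Proposition \ref{p:ProMw<}, which comes from the topological argument of Lemma \ref{l:Losubset}),
\begin{equation*}
\bP\big(B(0,r)\not\subset B_d(0,R)\big)=\bP\big(B(0,r)\not\subset B_d(0,\tfrac{R}{G(r)}G(r))\big)\ge c\Big(\frac{R}{G(r)}\Big)^{-4/5-\eps}\ge c\lam^{-1/2-\eps'},
\end{equation*}
throughout the nontrivial range $\lam\lesssim g(R)^2$. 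No construction can beat this polynomial floor, because a fixed point $w$ with $|w|\asymp r$ has probability $\gtrsim\lam^{-1/2-\eps}$ of having $d(0,w)>R$ (its UST path to $0$ wanders far before returning). So an exponential bound on the event you target is impossible, and the ``principal technical obstacle'' you flag at stage $k=1$ is not a gap in the write-up but the actual reason this route fails: there is no Beurling-type exponential tail for $\dist(z,L)$ against a one-dimensional spine $L$, and any ``thickening'' of $L$ by branches $\gam(z,L)$ inherits exactly the same polynomial wandering probability (this is the content of the discussion following Proposition \ref{p:Ur} in the paper).

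What the paper's proof does instead is abandon the requirement that the Euclidean ball be centred at $0$. It places $k$ disjoint candidate balls $B_i=B(z_i,r/k)$ along the LERW spine $\hS[0,\wh\sigma_r]$, and runs a sequence of ``ball steps'': each step attempts to fill in one candidate ball while staying local (each seed $w\in D_j$ must hit $\sU_0$ before exiting $B_j'$, and its branch must stay short). A single ball step succeeds with probability at least $1/4$, and a failed step burns at most $O(k^{1/2})$ candidates; with $\approx k^{1/2}$ attempts available, the probability that every attempt fails is $e^{-ck^{1/2}}$. The ``global'' failure modes (the tail of the spine hitting too many candidates, or the spine being too long in $B_{2r}$) are controlled by Lemma \ref{condesc} and \eqref{eq:uptailinf}. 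On success, some ball $B(z_j,\th^2 r/2k)$ is entirely inside $B_d(0,R)$, and $|B(z_j,\th^2 r/2k)|\ge c g(R)^2/k^3$, which gives \eqref{e:Vexplb} with $k\asymp\lam^{1/3}$. In short: you need to give up the fixed centre $0$ and search over $\approx k^{1/2}$ locations, each of which has a constant (not $1-\mathrm{poly}$) chance of working, and the exponential bound comes from independence across attempts, not from a uniform estimate on a single Euclidean ball.
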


\begin{proof}
Let $k \ge 1$ and let  $r = g(R/k^{1/2})$, so that $R= k^{1/2} G(r)$. 
Fix a constant $\delta_0<1$ such that the right side of
\eqref{e:pfillub} is less than $1/4$. 
Fix a further constant $\th<1$, to be chosen later but which will depend only on $\delta_0$. 

We begin the construction of $\sU$ with an infinite LERW $\hS$ 
started at 0 which gives the 
path $\gam_0=\sU_0=\gam(0, \infty)$. Let $z_i$, $i=1, \dots k$ be points on $\hS[0.\wh{\sigma}_r]$ chosen such that 
$B_i=B(z_i, r/k)$ 
are disjoint. (We choose these according to some fixed algorithm so that they depend
only on the path $\hS[0, \wh \sigma_r ]$.) 
Let
\begin{align} \label{e:F1def}
 F_1 &=\{\hbox{ $\hS[\wh \sigma_{2r}, \infty)$ hits more than $k/2$ of $B_1, \dots B_k$} \} , \\
F_2 &=\{ | \hS[0, \wh \sigma_{2r} ]| \ge \fract12 k^{1/2} G(r) \}. 
\end{align}
We have 
\begin{align} \label{e:F1}
\bP( F_1) \le C e ^{-c k^{1/3}}, \\
 \label{e:F2}
\bP( F_2) \le C e ^{-c k^{1/2}}. 
\end{align}
Of these, \eqref{e:F2} is immediate from \eqref{eq:uptailinf} 
while \eqref{e:F1} will be proved in Lemma \ref{lem:F1} below.

If either $F_1$ or $F_2$ occurs, we terminate the algorithm with a `Type 1' or `Type 2'
failure. Otherwise, we continue as follows to construct $\sU$ using
Wilson's algorithm. 

We define
$$ B_j' = B(z_i, \th r/k), \qquad B_j''= B(z_i, \th^2 r/k). $$

The algorithm is at two `levels' which we call `ball steps' and `point steps'.
We begin with a list $J_0$ of good balls. These are the balls
$B_j$ such that $B_j \cap \hS[\wh \sigma_{2r}, \infty) = \emptyset$.
The $n$th ball step starts by selecting a good ball $B_j$ from the list $J_{n-1}$ of remaining good balls. 
We then run Wilson's algorithm with paths starting in $B'_j$.
The ball step will end either with success, in which case the whole
algorithm terminates, or with one of three kinds of failure.
In the event of failure the ball $B_j$, and possibly
a number of other balls also, will be labelled `bad', and
$J_{n}$ is defined to be the remaining set of good balls.
If more than $k^{1/2}/4$ balls are labelled bad at any one ball step, we terminate the
whole algorithm with a `Type 3 failure'. Otherwise, we proceed until, if 
we have tried $k^{1/2}$ balls steps
without a success, we terminate the algorithm with a `Type 4 failure'.

We write $\sU_n$ for the tree obtained after $n$ ball steps. 
After ball step $n$, any ball $B_j$ in $J_{n}$ will have the property that
$B_j' \cap \sU_n = B'_j \cap \sU_0$.

We now describe in detail the second level of the algorithm, which works with a 
fixed (initially good) ball $B_j$. We assume that this is the $n$th ball
step (where $n \ge 1$), so that 
we have already built the tree $\sU_{n-1}$.
Let $D' \subset B(0, \th^2 r/k)$ satisfy
$$ |D'| \le c \delta_0^{-2}, \qquad 
B(0, \th^2 r/k) \subset \bigcup_{x \in D'} B(x, \delta_0 \th^2 r/k ). $$
Let $D_j = z_j + D'$, so that $D_j \subset B''_j$.

We now proceed to use Wilson's algorithm to build the paths
$\gam(w, \sU_{n-1})$ for $w \in D_j$. For $w \in D_j$ let
$S^w$ be a random walk started at $w$.
For each $w \in D_j$ let $G_{w}$ be the event that
$\gam(w, \sU_{n-1}) \subset B_j'$. 
If $F_{w}$ is the event that $S^{w}$ exits from $B'_j$ before it hits $\sU_0$, then 
\begin{equation}\label{e:Gwc}
 \bP( G^c_w) \le \bP(F_{w}) \le c \theta^{1/2}.
\end{equation}
Here the first inequality follows from Wilson's algorithm,
while the second is by the discrete Beurling estimates (\cite[Proposition 6.8.1]{LL08}).

Let $M_{w}= d(w, \sU_{n-1})$,
and $T_{w}$ be the first time $S^{w}$  hits $\sU_{n-1}$.
 Then by  Wilson's algorithm and \eqref{eq:uptail},
\begin{align}   \label{e:MwGw}
  \bP( M_{w} \ge \th^{-1}  G(\th r/k); G_w )
= \bP( M_{w} \ge \th^{-1} G(\th r/k); \Lo(S^{w}[0, T_{w}]) \subset  B'_j )  
 \le   c e^{ - c \th^{-1}}. 
\end{align}

We now define sets corresponding to three possible outcomes to this procedure:
\begin{align*}
  H_{1,n} &=  \bigcup_{w \in D_j} G^c_w, \\
 H_{2,n} &=  \left\{ \max_{w \in D_j} M_w \ge \th^{-1} G(\th r/k)\right\} \cap \bigcap_{w \in D_j} G_w, \\
 H_{3,n} &=  \left\{ \max_{w \in D_j} M_w < \th^{-1} G(\th r/k)\right\} \cap \bigcap_{w \in D_j} G_w.
\end{align*}
By \eqref{e:Gwc},
\begin{equation}\label{e:H1ub}
 \bP(H_{1,n}) \le \sum_{w \in D_j}  \bP(G_w) \le  c \delta_0^{-2} \th^{1/2},
\end{equation}
and by \eqref{e:MwGw},
\begin{equation}\label{e:H2ub}
 \bP(H_{2,n}) \le \sum_{w \in D_j} \bP( M_{w} \ge \th^{-1} G(\th r/k); G_w)
 \le c \delta_0^{-2} e^{ - c \th^{-1}} .
\end{equation}
We now choose the constant $\th$ small enough so that each of
$\bP(H_{i,n}) \le \fract14$ for $i=1,2$, and therefore  
\begin{equation}\label{e:H3lb}
 \bP(H_{3,n}) \ge \fract12. 
\end{equation}

If $H_{3,n}$ occurs then we have constructed a tree $\sU_n'$ which contains
$\sU_{n-1}$ and $D_j$. Further, we have that for each point $w \in D_j$,
the path $\gam(w,0)$ hits $\sU_0$ before it leaves $B'_j$. Hence,
$$ d(w,0) \le M_w + \max_{z \in \sU_0 \cap B_j} d(0,z)
 \le \fract12 k^{1/2} G(r) + \th^{-1} G(\th r/k). $$
 
 We now use Wilson's algorithm to fill in the remainder of $B'_j$.
 Let $G_n$ be the event given by applying Proposition \ref{p:fillin}
 to the ball $B''_j$ with $U_0= \sU'_{n}$. 
 Then
$$ \bP( G_{n}^c ) \le c e^{-c \delta_0^{-1/3} } \le \fract14 $$
by the choice of $\delta_0$, 
and therefore $\bP(H_{3,n} \cap G_n) \ge \fract14$.
If this event occurs, then all points in $B(z_j, \th^2 r/2k)$ are within distance
 $G( \delta_0^{1/2} \th^2 r/k)$ of $\sU'_n$ in the graph metric $d$; in this case 
 we label ball step $n$
as successful, and we terminate the whole algorithm. 
Then for all $z \in B(z_j, \th^2 r/2k)$,
\begin{align*} 
d(0,z) &\le d(z, \sU_n') + \max_{w \in \sU_n'} d(w,0) \\
&\le G( \delta_0^{1/2} \th^2 r/k) +  \fract12 k^{1/2} G(r) + \th^{-1} G(\th r/k)\\
&\le  k^{1/2} G(r),
 \end{align*}
provided that $k$ is large enough.
So there exists $k_0\ge 1$ such that, provided that $k\ge k_0$,
if $H_{3,n} \cap G_n$ occurs then 
$B(z_j, \th^2 r/2k ) \subset B_d (0, k^{1/2} G(r) )$. 
Since $R = k^{1/2} G(r) \le G(k^{1/2} r)$ we have
$g(R) \le k^{1/2} r$, and therefore 
\begin{equation}\label{e:succ}
 |B_d(0, R)| \ge  | B(z_j, \th^2 r/2k)| \ge c k^{-2} r^2
  \ge c g(R)^2/k^3. 
  \end{equation}

\sms
If $H_{1,n} \cup H_{2,n} \cup (H_{3,n} \cap G_n^c)$ occurs 
then as soon as we have a random walk $S^w$ that 
`misbehaves' (either by leaving $B_j'$ before hitting $\sU_0$, or by having
$M_w$ too large), then we terminate the ball step and mark the ball $B_j$ as `bad'.
If $\om \in H_{2,n}$ only the ball $B_j$ becomes bad, but
if $\om \in H_{1,n} \cup (H_{3,n}\cap G_n^c) $  
 then $S^w$ may hit several other balls $B'_i$ before it 
hits $\sU_{n-1}$. Let $N^B_w$ denote the number of such balls hit by $S^w$.
By Beurling's estimate, the probability that
$S^w$ enters a ball $B_i'$ and then exits $B_i$ without hitting $\sU_0$ is less
than $c \th^{1/2}$. Since the balls $B_i$ are disjoint, 
\begin{equation} \label{e:NBtail}
 \bP( N^B_w \ge m ) \le (c \th^{1/2})^m \le e^{-c' m}. 
\end{equation}
A Type 3 failure occurs if $N^B_w\ge k^{1/2}/4$; using \eqref{e:NBtail} we see
that the probability that a ball step ends with a Type 3 failure is bounded by
$\exp(- c k^{1/2})$. If we write $F_3$ for the event that some ball
step ends with a Type 3 failure, then since there are at most
$k^{1/2}$ ball steps,
\begin{equation}\label{e:F3}
 \bP(F_3) \le k^{1/2} \exp(- c k^{1/2}) \le  C \exp(- c' k^{1/2}).
\end{equation}

The final possibility is that $k^{1/2}$ ball steps all end in failure;
write $F_4$ for this event. Since each ball step has a probability
at least $1/4$ of success (conditional on the previous steps 
of the algorithm), we have
\begin{equation}\label{e:F4}
 \bP(F_4) \le (3/4)^{k^{1/2}} \le  e^{-c k^{1/2}}. 
\end{equation}

Thus either the algorithm is successful, or it ends with one of four
types of failure, corresponding to the events $F_i$, $i=1, \dots 4$.
By Lemma \ref{lem:F1} and 
\eqref{e:F2}, \eqref{e:F3}, \eqref{e:F4} we have
$\bP(F_i) \le C \exp(-c k^{1/3})$ for each $i$. 
Therefore, we have that provided $k \ge k_0$, \eqref{e:succ} holds
except on an event of probability $C \exp(-c k^{1/3})$.
Taking $k=c \lam^{1/3}$ for a suitable constant $c$, and adjusting
the constant  $C$ so that \eqref{e:Vexplb} holds for all $\lam$ completes
the proof. \qed
\end{proof}

\ms

The reason why we can only get a polynomial bound in the Theorem \ref{t:Vlb} is that one cannot 
get exponential estimates for the probability that $\gamma(0,w)$ leaves $B(0,k\abs{w})$ 
(see Lemma \ref{l:Losubset}). However, if we let $U_r$ be the connected component of 
$0$ in $\sU \cap B(0,r)$, then the following proposition enables us to get exponential control on the length 
of $\gamma(0,w)$ for $w \in U_r$. This will allow us to obtain an exponential bound on the lower 
tail of $\Reff(0, B_d(0,R)^c)$ in Proposition \ref{p:kmest}. 

\begin{proposition} \label{p:Ur}
There exist positive constants $c$ and $C$ such that for all $\lam \ge 1$ and $r \ge 1$, 
\begin{equation}\label{e:Uri}
\Pro{}{ U_r \not\subset B_d(0,\lam G(r)) } \le  C e^{-c \lam}. 
\end{equation}
\end{proposition}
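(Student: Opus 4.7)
The plan is to construct $\sU$ by Wilson's algorithm with $\gamma_0=\gamma(0,\infty)$ as the first path, and to combine three exponentially-controlled bounds using a two-scale construction modelled on Proposition \ref{p:fillin}. Let $T_0$ denote the prefix of $\gamma_0$ before its first exit from $B(0,r)$; any $z\in\gamma_0\cap B(0,r)\setminus T_0$ lies on a reentering portion and fails $\gamma(0,z)\subset B(0,r)$, so $T_0=\gamma_0\cap U_r$. By \eqref{eq:uptailinf}, $\wh M_r\le\lambda G(r)/3$ with probability $\ge 1-Ce^{-c\lambda}$, controlling the trunk contribution $d(0,z)\le\wh M_r$ for every $z\in T_0$.

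Fix $\delta_0=c\lambda^{-3}$, let $E\subset B(0,2r)$ be a $2\delta_0 r$-net with $|E|=O(\delta_0^{-2})=O(\lambda^6)$, and let $\sU_0$ be the subtree of $\sU$ spanned by $\gamma_0\cup E$. For each $z\in E$, apply Theorem \ref{t:expLERW}(1) to the simply connected domain $D=\bZ^2\setminus\gamma_0$ (translated so $z$ is the origin) with $D'=B(0,r)\cap D$ (in original coordinates) and $n=r$: since $0\in\gamma_0$ we have $\dist(z',\gamma_0)\le r$ for every $z'\in D'$, and the portion of $\gamma_0$ from a closest $\gamma_0$-point to $z'$ out to infinity supplies the required path in $D^c$ from $B(z',n+1)$ to $B(z',2n)^c$. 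This yields
\[
  \Pro{}{|\gamma(z,\gamma_0)\cap B(0,r)|>\lambda G(r)/3 \,\bigm|\, \gamma_0}\le Ce^{-c\lambda}.
\]
Union-bounding over the $O(\lambda^6)$ points of $E$ costs only a polynomial factor in $\lambda$, which is absorbed by $e^{-c\lambda}$ for $\lambda\ge\lambda_*$.

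Apply Proposition \ref{p:fillin} with ``$r$'' replaced by $2r$ and our choice of $\delta_0$; its good event has probability $\ge 1-Ce^{-c\delta_0^{-1/3}}=1-Ce^{-c\lambda}$, and on it every $w\in B(0,r)$ lies within intrinsic distance $G(2\delta_0^{1/2}r)\le C\lambda^{-15/8}G(r)\le\lambda G(r)/3$ of $\sU_0$ by Lemma \ref{l:vgrwth} (the last inequality valid once $\lambda$ exceeds a numerical threshold). Given $w\in U_r$, let $z^*$ be the last point of $\gamma(0,w)\cap\sU_0$, so $\gamma(0,z^*)\subset B(0,r)$. If $z^*\in\gamma_0$ then $z^*\in T_0$ and $d(0,z^*)\le\wh M_r$; otherwise $z^*\in\gamma(z,\gamma_0)$ for some $z\in E$ with endpoint $z''\in\gamma_0$, and since $\gamma_0[0\to z'']\subset\gamma(0,z^*)\subset B(0,r)$ one has $z''\in T_0$, while the portion of $\gamma(z,\gamma_0)$ between $z^*$ and $z''$ lies in $B(0,r)$, giving $d(0,z^*)\le\wh M_r+|\gamma(z,\gamma_0)\cap B(0,r)|$. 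Combining with $d(z^*,w)\le\lambda G(r)/3$ yields $d(0,w)\le\lambda G(r)$ on the intersection of the three good events. The main obstacle is the scale-matching: the choice $\delta_0=c\lambda^{-3}$ simultaneously makes the Proposition \ref{p:fillin} exponent $\delta_0^{-1/3}=\lambda$, keeps the fill-in scale $G(\delta_0^{1/2}r)\ll G(r)$, and bounds $|E|$ by a polynomial in $\lambda$ absorbed by $e^{-c\lambda}$; without this alignment a naive per-$w$ union bound would cost the prohibitive factor $r^2$. The regime $\lambda<\lambda_*$ is trivial after enlarging $C$.
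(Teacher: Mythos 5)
Your proof is correct and follows essentially the same strategy as the paper's: a polynomial-size net $E$ at scale $\delta_0 r\asymp\lambda^{-3}r$, exponential bounds per net point, a union bound, and Proposition~\ref{p:fillin} to absorb the fill-in. The only difference is organizational: the paper cites Proposition~\ref{p:expbnd2} as a black box to bound $d(0,Y_z)$ in one step, whereas you inline that argument by running $\gamma_0=\gamma(0,\infty)$ explicitly and splitting $d(0,z^*)$ into a trunk contribution ($\wh M_r$ via \eqref{eq:uptailinf}) plus a branch contribution ($|\gamma(z,\gamma_0)\cap B(0,r)|$ via Theorem~\ref{t:expLERW}(1)), which yields the same estimate.
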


\begin{proof}
This proof is similar to that of Theorem \ref{t:Vlb}. 
Let $E \subset B(0,2r)$ be such that
$\abs{E} \leq C \lambda^6$ and 
$$ B(0,2r) \subset \bigcup_{z \in E} B(z, \lambda^{-3} r),$$
and let $\sU_0$ be the random tree obtained by applying Wilson's algorithm with 
points in $E$ and root $0$. 
For each $z \in E$, let $Y_z$ be defined as in Proposition \ref{p:expbnd2},
so that $Y_z = z$ if $\gam(0,z) \subset B(0,2r)$, and 
otherwise $Y_z$ is the first point on $\gam(0,z)$ which is outside $B(0,2r)$.
Let
\begin{align*}
G_1 &= \{ d(Y_z,0) \le \fract12 \lam G(r) \, \hbox{ for all } z \in E \}.   
\end{align*}
Then by Proposition \ref{p:expbnd2}, 
\begin{equation}\label{e:pg1}
\bP( G_1^c) \le \sum_{z \in E} \bP( d(Y_z,0) > \half \lam G(2 r) ) 
\le \abs{E} C e^{-c \lam}  \le C \lam^6 e^{-c\lam}. 
\end{equation}
We now complete the construction of $\sU$ by using
Wilson's algorithm. Then Proposition \ref{p:fillin} with $\delta_0 = \lam^{-3}$ implies 
that there exists an event $G_2$ with
\begin{equation}\label{e:pg2}
\Pro{}{G_2^c} \le  e^{-c \delta_0^{-1/3}} = e^{-c \lam},
\end{equation}
and on $G_2$,
\begin{align*}
 &\max_{x \in B(0,r)} d(x, \sU_0) \le  G( \lam^{-3/2} r).  
 \end{align*}

Suppose $G_1 \cap G_2$ occurs, and let $x \in U_r$. Write $Z_x$
for the point where $\gam(x,0)$ meets $\sU_0$. Since $x \in U_r$,
we must have $Z_x \in B(0,r)$, and $\gam(Z_x, 0) \subset B(0,r)$.
As $Z_x \in \sU_0$, there exists $z \in E$ such that $Z_x \in \gam(0,z)$.
Since $G_1$ occurs, $d(0, Z_x) \le d(0, Y_z) \le \half \lam G(r)$,
while since $G_2$ occurs $d(x,Z_x) \le G(\lam^{-3/2}r)$. So, provided
$\lam$ is large enough,
$$ d(0,x) \le d(0, Z_x) + d(Z_x,x) \le \half \lam G(r)+ G(\lam^{-3/2}r)
\le \lam G(r). $$
Using \eqref{e:pg1} and \eqref{e:pg2}, and adjusting the constant
$C$ to handle the case of small $\lam$ completes the proof. \qed
\end{proof}

\begin{proposition} \label{p:kmest}
There exist positive constants $c$ and $C$ such that for all $R\ge 1$ and $\lam \ge 1$, \\
(a)
\begin{equation}\label{e:Reffa}
\bP( \Reff(0, B_d(0,R)^c) <  \lam^{-1} R ) \le C e^{-c \lam^{2/11}};
\end{equation}
(b) 
\begin{equation}
 \bE ( \Reff(0, B_d(0,R)^c) |B_d(0,R)|) \le C R g(R)^2.
\end{equation}
\end{proposition}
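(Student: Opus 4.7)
For (b), since $\sU$ is a tree, $\Reff(0, z) = d(0, z)$ for every vertex $z$, and $\Reff(0, A) \le d(0, z)$ for any $z \in A$. Taking $z_*$ to be the vertex on the infinite path $\gam(0, \infty)$ at graph distance $\lfloor R \rfloor + 1$ from $0$ gives the deterministic bound $\Reff(0, B_d(0, R)^c) \le R + 1 \le 2R$ for $R \ge 1$. Combining this with $\bE |B_d(0, R)| \le C g(R)^2$ from \eqref{e:Vdasymp2} immediately yields part (b):
$$\bE\bigl[\Reff(0, B_d(0, R)^c)\,|B_d(0, R)|\bigr] \le 2R\,\bE|B_d(0, R)| \le C R g(R)^2.$$

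For (a), the plan is to combine a Wilson-algorithm construction (patterned on the proof of Theorem \ref{t:Vub}(b)) with Proposition \ref{p:Ur} and then apply the Nash-Williams cutset bound. Introduce two parameters $\mu, \nu \ge 1$ (to be chosen later as powers of $\lam$), set $r := g(R/\nu)$ so that $\nu G(r) = R$, and apply Proposition \ref{p:Ur} at this $r$ with parameter $\nu$: on an event $\sG_1$ of probability $\ge 1 - C e^{-c\nu}$, $U_r \subset B_d(0, R)$, and hence every $y \in B_d(0, R)^c$ lies outside $U_r$, which forces $\gam(0, y)$ to leave $B(0, r)$. Next carry out the construction in the proof of Theorem \ref{t:Vub}(b) at this same radius $r$ with parameter $\mu$, starting Wilson's algorithm from a set $D_1 \subset A_1$ of cardinality $\le C\mu^2$ followed by nested finer grids $D_2 \subset A_2, \ldots$ Defining
$$\Gam_k := \{z \in \sU : d(0,z) = k \text{ and } z \text{ is connected to } B(0,r)^c \text{ by a path in } \{z\} \cup (\sU \setminus \gam(0, z))\},$$
the same argument as in that proof shows that on a second event $\sG_2$ of probability $\ge 1 - C e^{-c\mu^{2/3}}$ one has $|\Gam_k| \le |D_1| \le C\mu^2$ for every $k \le L := \mu^{-1} G(r) = R/(\mu\nu)$. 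On $\sG_1 \cap \sG_2$ the sets $\Gam_k$ for $k \le L$ are disjoint cutsets separating $0$ from $B_d(0, R)^c$: for any $y \in B_d(0, R)^c$ the path $\gam(0, y)$ exits $B(0, r)$ by $\sG_1$, so the level-$k$ vertex on $\gam(0, y)$ satisfies the connectivity condition defining $\Gam_k$. Nash-Williams then gives
$$\Reff(0, B_d(0, R)^c) \ge \frac{L}{C\mu^2} = \frac{R}{C\mu^3 \nu}.$$

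The final step is to balance the two failure probabilities $Ce^{-c\mu^{2/3}}$ and $Ce^{-c\nu}$ subject to the resistance constraint $C\mu^3\nu \le \lam$. Equalising $\mu^{2/3} \asymp \nu$ and solving $\mu^3\nu = \lam/C$ forces $\mu = c_1 \lam^{3/11}$ and $\nu = c_2 \lam^{2/11}$ with constants chosen so that $C\mu^3\nu = \lam$, which delivers $\Reff(0, B_d(0, R)^c) \ge \lam^{-1} R$ off an event of probability at most $Ce^{-c\mu^{2/3}} + Ce^{-c\nu} \le Ce^{-c\lam^{2/11}}$. The main technical obstacle is verifying the cutset property under the relatively weak control furnished by Proposition \ref{p:Ur}: one must check that for every $y \in B_d(0, R)^c$ the path $\gam(0, y)$ reaches $B(0, r)^c$ in a way compatible with the $\Gam_k$ definition, and this uses precisely the conclusion $B_d(0, R)^c \cap U_r = \emptyset$ of Proposition \ref{p:Ur}. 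For small $\lam$ the claimed bound is vacuous after enlarging $C$, since $\Reff$ is always strictly positive.
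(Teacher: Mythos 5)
Your proof of (b) is correct and is the paper's argument: use $\Reff(0,B_d(0,R)^c)\le d(0,z_*)\le 2R$ together with $\bE|B_d(0,R)|\le Cg(R)^2$ from Theorem \ref{t:main2}. For (a), your decomposition, the use of Proposition \ref{p:Ur}, and the exponent balance $\mu\asymp\lam^{3/11}$, $\nu\asymp\lam^{2/11}$ reproduce the paper exactly; the difference is that you re-run the Nash--Williams construction of Theorem \ref{t:Vub}(b) from scratch with target set $B_d(0,R)^c$, whereas the paper observes that $\Reff(0,B(0,r)^c)=\Reff(0,U_r^c)$ and that on $\{U_r\subset B_d(0,R)\}$ one has $\Reff(0,B_d(0,R)^c)\ge\Reff(0,U_r^c)$ by monotonicity, then simply cites \eqref{e:Bdtail2} with parameter $\lam^{3/11}$. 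The paper's route avoids your ``main technical obstacle'' entirely, since the cutset property is checked only for the fixed Euclidean ball $B(0,r)$. In your inline version there is a genuine small gap in the cutset verification: for the level-$k$ vertex $z_k$ on $\gam(0,y)$ (with $y\in B_d(0,R)^c$) to lie in $\Gam_k$, the subtree hanging off $z_k$ must reach $B(0,r)^c$, and you attribute this to $\sG_1$ alone. But $\sG_1$ only gives that $\gam(0,y)$ eventually leaves $B(0,r)$; you also need $\sG_2$ (the event of Theorem \ref{t:Vub}(a)) to guarantee that the first exit occurs at graph distance $>L=\mu^{-1}G(r)$, so that for all $k\le L$ the exit point $z_{j^*}\notin B(0,r)$ lies on the tail of $\gam(0,y)$ beyond $z_k$. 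You do intersect with $\sG_2$, so the conclusion survives, but the stated justification is incomplete; citing \eqref{e:Bdtail2} plus the monotonicity identity sidesteps this.
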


\begin{proof}
(a) Recall the definition of $U_r$ given before Proposition \ref{p:Ur}, 
and note that for all $r \geq 1$, $\Reff(0, B(0,r)^c) = \Reff(0, U_r^c)$.
Given $R$ and $\lam$, let $r$ be such that $R= \lam^{2/11} G(r)$.
By monotonicity of resistance we have that if
$U_r \subset B_d(0,R)$, then 
$$ \Reff(0, B_d(0,R)^c) \ge \Reff(0, U_r^c). $$
So, writing $B_d = B_d(0,R)$,
\begin{align*}
\bP( \Reff(0, B_d^c) <  \lam^{-1} R )
 &= \bP( \Reff(0, B_d^c) < \lam^{-1} R; U_r \not\subset B_d) 
+ \bP(  \Reff(0, B_d^c) < \lam^{-1} R; U_r \subset B_d)\\ 
 &\le \bP( U_r \not\subset B_d(0, \lam^{2/11} G(r) ) )
+ \bP(  \Reff(0, U_r^c) < \lam^{-9/11} G(r) ).
\end{align*}
By Proposition \ref{p:Ur},
$$ \bP( U_r \not\subset B_d(0, \lam^{2/11} G(r) ) ) \leq C e^{-c\lam^{2/11}},$$
while by \eqref{e:Bdtail2},
$$ \bP(  \Reff(0, U_r^c) < \lam^{-9/11} G(r) ) \leq C e^{-c \lam^{2/11}}.$$
This proves (a).

(b) Since $\Reff(0, B_d(0,R)^c) \le R$, this is immediate 
from Theorem \ref{t:main2}. \qed
\end{proof}

\ms

We conclude this section by proving the following technical lemma that
was used in the proof of Theorem \ref{t:Vexplb}.

\begin{lemma}\label{lem:F1}
Let $F_1$ be the event defined by \eqref{e:F1def}. Then
\begin{align}
\bP( F_1) \le C e ^{-c k^{1/3} }. 
\end{align}
\end{lemma}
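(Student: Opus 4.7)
The strategy is to decompose the tail $\hat S[\hat\sigma_{2r},\infty)$ into a sequence of excursions from $B_r$ out past $B_{Kr}$ (for a large constant $K$), and to bound separately the number of such excursions and the number of balls $B_i$ visited during each excursion.

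For the excursion count, I would fix $K$ so that $C/K \le 1/2$, where $C$ is the constant from Lemma \ref{condesc}, and define stopping times $\rho_0 = \hat\sigma_{2r}$ and, for $j \ge 1$, $\rho_j$ equal to the first time after $\rho_{j-1}$ that $\hat S$ re-enters $B_r$ and then exits $B_{Kr}$. Combining the domain Markov property of the infinite LERW (the tail past a stopping time is, conditional on the past, an LERW in the complementary domain going to infinity) with an iterated application of Lemma \ref{condesc} applied to the shifted starting position, one obtains $\Pro{}{J \ge m} \le 2^{-m}$ for $J = \#\{j : \rho_j < \infty\}$.

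For the number $N_j$ of balls $B_i$ hit during the $j$-th excursion, the LERW trace is contained in $B_{Kr}$, and by Theorem \ref{t:expLERW}(iii) its length is at most $\lambda G(Kr)$ with probability $1 - Ce^{-c\lambda}$. A Beurling-type estimate (\cite[Proposition 6.8.1]{LL08}) bounds the probability that this trace hits any specific $B_i = B(z_i, r/k)$: recall that $z_i$ lies on the initial segment $\hat S[0,\hat\sigma_r]$, which the tail must avoid by the self-avoidance of the LERW, so the walk reaches a small ball of radius $r/k$ around a point of a connected set it must avoid only with probability at most $C(r/k)^{1/2}r^{-1/2}$ from the entry point on $\partial B_{Kr}$. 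Tuning the length and hitting bounds together would then give $\Pro{}{N_j \ge C k^{2/3}} \le C e^{-c k^{1/3}}$.

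Putting these together, on the event $\{N > k/2\}$ with $N = \sum_j N_j$, either $J > k^{1/3}/(2C)$ (whose probability is at most $2^{-k^{1/3}/(2C)}$ by the excursion-count bound) or some $N_j > Ck^{2/3}$ (whose probability, after conditioning on $J$ and applying the bound from the previous step, is at most $Ce^{-ck^{1/3}}$ since $J$ has exponential tails). A union bound yields $\Pro{}{F_1} \le Ce^{-ck^{1/3}}$. The main obstacle is the second step: establishing the $k^{2/3}$ scaling of $N_j$ with the stretched-exponential tail of the correct order. This requires combining the LERW length estimate from Theorem \ref{t:expLERW} with Beurling/Harnack-type bounds on the hitting probabilities of the small balls $B(z_i, r/k)$ by the LERW avoidance region, and balancing the exponents so that their combination produces the exponent $1/3$ in the final bound.
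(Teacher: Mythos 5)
Your high-level strategy (cut off the far tail, decompose into excursions, bound the ball hits per excursion by Beurling, then take a union bound) is in the same spirit as the paper's proof, and your excursion count argument for $J$ and the final union bound are sound.  However, you have correctly put your finger on the gap: step 2, bounding $N_j$, does not go through as written, and it is precisely here that the paper introduces a different idea.

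The difficulty is that your excursions are excursions of the \emph{infinite LERW} $\wh S$, which is not a Markov process.  By the domain Markov property, conditional on the past, the continuation is a random walk conditioned to avoid the past; but the \emph{trace} $\wh S[\wh\sigma_{2r},\infty)$ is the loop erasure of that conditioned walk, not the walk itself, so Beurling's estimate (which is a statement about simple random walk) cannot be applied directly to bound the number of balls the LERW trace enters.  Likewise, the length bound $\wh M_{Kr}\le\lam G(Kr)$ from Theorem~\ref{t:expLERW}(iii) gives no control on how many disjoint $r/k$-balls centred on the initial segment the path can touch: a self-avoiding path of length $O(G(r))\gg k$ could in principle visit all $k$ of them.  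There is no obvious way to ``balance exponents'' between these two bounds to get $\Pro{}{N_j \ge Ck^{2/3}} \le C e^{-ck^{1/3}}$.

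The paper sidesteps this by changing the object whose excursions are counted.  After cutting off at radius $br$ with $b=e^{k^{1/3}}$ (a growing, not constant, factor — chosen so that the Lemma~\ref{condesc} tail $C/b$ already matches the target $e^{-ck^{1/3}}$), it replaces the infinite LERW by the finite-domain loop erasure $L'=\Lo(S[0,\sigma_{4br}])$ via \cite[Corollary~4.5]{Mas09}.  This is the crucial move: one now decomposes the \emph{simple random walk} $S$ into excursions $[R_j,T_j]$ between $\pd B_r$ and $\pd B_{2r}$.  These SRW excursions are genuine Markovian excursions; their number $M$ is geometric with success probability $\log 2/\log(4b)\asymp k^{-1/3}$, so $\bP(M\ge k^{2/3})\le e^{-ck^{1/3}}$.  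If $L'$ hits $>k/2$ balls then, by pigeonhole, during some SRW excursion the walk enters $\gtrsim k^{1/3}$ of the disjoint balls \emph{without meeting the committed part $L_j[0,\al_j]$ of the loop-erased path}, and since each ball centre lies on that path, Beurling's estimate (applied to the unconditioned SRW) makes each successive such entry cost a constant factor, giving $e^{-ck^{1/3}}$.  Thus the Beurling argument is applied to $S$, where it is legitimate, and the LERW length bound is not used at all.  To repair your proof you would need an analogue of this finite-domain/SRW-excursion reduction, or an independent argument that the number of balls visited by the conditioned walk's trace has a geometric tail — neither of which follows from the ingredients you have assembled.
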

 
\begin{proof}
Let $b= e^{k^{1/3}}$. Then by Lemma \ref{condesc}
\begin{equation}\label{e:expret}
\Pro{}{\wh{S}[\wh{\sigma}_{br}, \infty) \cap B_{r} \neq \emptyset} \le C b^{-1}
\le C e^{-k^{1/3}}. 
\end{equation}
If $\hS[\wh \sigma_{2r}, \infty)$ hits more than $k/2$ balls 
then either $\hS$ hits $B_r$ after time $\wh \sigma_{br}$, or
$\hS[\wh \sigma_{2r}, \wh \sigma_{br}]$ hits more than $k/2$ balls.
Given \eqref{e:expret}, it is therefore sufficient to prove that
\begin{equation}\label{e:43b}
 \bP( \hS[\wh \sigma_{2r}, \wh \sigma_{br}] \hbox{ hits more than $k/2$ balls} )
 \le  C e ^{-c k^{1/3} }. 
\end{equation}
Let $S$ be a simple random walk started at $0$, and let 
$L' = \Lo(S[0, \sigma_{4br}])$. Then  by \cite[Corollary 4.5]{Mas09}, in order to prove
\eqref{e:43b}, it is sufficient to prove that
\begin{equation}\label{e:43c}
 \bP( L' \hbox{ hits more than $k/2$ balls} )
 \le  C e ^{-c k^{1/3} }. 
\end{equation}

Define stopping times for $S$ by letting $T_0=\sigma_{2r}$ and for $j \ge 1$,
\begin{align*}
 R_j &= \min\{ n \ge T_{j-1}: S_n \in B(0, r) \}, \\
 T_j &= \min\{ n \ge R_{j}: S_n \notin B(0, 2r) \}.
\end{align*}
Note that the balls $B_j$ can only be hit by $S$ in the intervals
$[R_j, T_j]$ for $j \ge 1$.
Let $M =\min\{j: R_j \ge \sigma_{4 br} \}$. Then
$$ \bP( M = j+1 | M>j) =  \frac{ \log (2r) -\log (r)}{\log ( 4br) - \log r} 
= \frac{\log 2}{ \log (4b)} \ge c k^{-1/3}. $$
Hence
$$ \bP( M \ge k^{2/3}) \le C \exp( - c k^{1/3} ). $$

For each $j\ge 1$ let $L_j = \Lo(S[0, T_j])$, 
let $\al_j$ be the first exit by $L_j$ from $B(0,2r)$, and
$\beta_j$ be the number of steps of $L_j$. 

If $L'$ hits more than $k/2$ balls
then there must exist some $j \le M$ such that
$L_j[\al_j, \beta_j]$ hits more than $k/2$ balls $B_i$.
(We remark that since the balls $B_i$ are defined in terms of the
loop erased walk path, they will depend on $L_j[0, \al_j]$.
However, they will be fixed in each of the
intervals $[R_j, T_j]$.)
Hence, if $M \le k^{2/3}$ and $L'$ hits more than $k/2$ balls
then $S$ must hit more than $c k^{1/3}$ balls in one of the
intervals $[R_j, T_j]$, without hitting the path $L_j[0, \al_j]$.
However, by Beurling's estimate
the probability of this event is less than 
$C \exp(-c k^{1/3})$. Combining these estimates 
concludes the proof. \qed
\end{proof}


\section{Random walk estimates}\label{sect:RW}
We recall the notation of random walks on the UST given in the introduction.
In addition, define $P^*$ on $\Omega \times \sD$ by
setting $P^*( A \times B) = \bE [\ind_A P^0_\om (B)]$ and extending this to 
a probability measure. 
We write $\ol \om$ for elements of $\sD$.
Finally, we recall the definitions of the stopping times
$\tau_R$ and $\wt \tau_r$ from \eqref{e:tRdef} and \eqref{e:wtrdef} and the transition 
densities $p^\om_n(x,y)$ from \eqref{eq:hkdef}. To avoid difficulties 
due to $\sU$ being bipartite, we also define
\begin{equation}
 \wp^\om_n(x,y)= p^\om_n(x,y) + p^\om_{n+1}(x,y).
\end{equation}

\sms

Throughout this section, we will write $C(\lam)$ to denote expressions of the form $C \lam^p$ and $c(\lam)$ to denote expressions of the form $c \lam^{-p}$, where $c$, $C$ and $p$ are positive constants.

\sms

As in \cite{BJKS, KM} we define a (random) set $J(\lam)$:

\begin{definition} \label{jdef} 
\emph{ Let $\sU$ be the UST. 
For $\lambda \ge 1$ and $x \in \bZ^2$, let $J(x,\lambda)$ be the set of
those $R \in [1,\infty]$ such that the following all hold: \\
(1) $ |B_d(x,R)| \le  \lam g(R)^2  $,\\
(2) $\lam^{-1} g(R)^2  \le |B_d(x,R)| $,\\
(3)  $\Reff(x, B_d(x,R)^c) \ge \lam^{-1} R$. }
\end{definition}

\begin{proposition}\label{p:KMsat}
For $R\ge 1$, $\lam \ge 1$ and $x \in \bZ^2$, \\
(a) \begin{equation} \label{e:kmest}
 \bP( R \in J(x, \lam) ) \ge 1 - C e^{-c \lam^{1/9}};
 \end{equation}
(b) $$ \bE ( \Reff(0, B_d(0,R)^c) |B_d(0,R)|) \le C R g(R)^2. $$
Therefore conditions (1), (2) and (4) of \cite[Assumption 1.2]{KM} hold with $v(R) = g(R)^2$ and $r(R) = R$. 
\end{proposition}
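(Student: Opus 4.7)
The plan is to reduce (a) to the case $x=0$ using the translation invariance of the UST on $\bZ^2$, and then to bound the probability that each of the three defining conditions of $J(0,\lam)$ fails, using results already established earlier in the paper.

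For the first condition, the upper volume bound $|B_d(0,R)| \le \lam g(R)^2$, I would invoke the upper tail estimate \eqref{e:vlubpii} from Theorem \ref{t:main2}, which gives failure probability at most $Ce^{-c\lam^{1/3}}$. For the second condition, the matching lower volume bound, I would apply \eqref{e:vlubpi} from Theorem \ref{t:main2} (proved via Theorem \ref{t:Vexplb}), giving failure probability at most $Ce^{-c\lam^{1/9}}$. For the third condition, the resistance lower bound $\Reff(0,B_d(0,R)^c) \ge \lam^{-1} R$, I would apply Proposition \ref{p:kmest}(a), giving failure probability at most $Ce^{-c\lam^{2/11}}$. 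A union bound then yields
\[
\bP(R \notin J(0,\lam)) \le C\bigl(e^{-c\lam^{1/3}} + e^{-c\lam^{1/9}} + e^{-c\lam^{2/11}}\bigr) \le C e^{-c\lam^{1/9}},
\]
since $1/9$ is the smallest of the three exponents. Translation invariance (the law of $\sU$ is invariant under translations of $\bZ^2$, and $J(x,\lam)$ is defined in a translation-covariant way) then extends this bound to arbitrary $x$.

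For part (b), there is essentially nothing to do: the inequality $\bE(\Reff(0,B_d(0,R)^c)|B_d(0,R)|) \le CRg(R)^2$ is exactly the statement of Proposition \ref{p:kmest}(b), so I would simply cite it.

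The final assertion, that conditions (1), (2), (4) of \cite[Assumption 1.2]{KM} hold with $v(R)=g(R)^2$ and $r(R)=R$, requires only a translation between notations. Conditions (1) and (2) of that assumption are exactly our volume and resistance estimates packaged into the set $J(x,\lam)$, and condition (4) is the expected-value bound in part (b). No nontrivial analytic work is required beyond matching the statements. The main obstacle is really bookkeeping: checking that the exponents in \cite{KM} are expressed in a form compatible with $v(R)=g(R)^2$ and $r(R)=R$, and that the stretched-exponential tail $e^{-c\lam^{1/9}}$ is strong enough for what \cite{KM} requires (which it is, since \cite{KM} only needs summable tail decay in $\lam$).
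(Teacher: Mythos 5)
Your proposal is correct and follows essentially the same route as the paper: the volume conditions in the definition of $J(x,\lam)$ are handled by the two tail bounds of Theorem \ref{t:main2}, the resistance condition by Proposition \ref{p:kmest}(a), a union bound gives the dominant $e^{-c\lam^{1/9}}$ term, stationarity of $\sU$ transfers the estimate from $0$ to arbitrary $x$, and part (b) is a restatement of Proposition \ref{p:kmest}(b). The only point the paper makes that you elide is that the definition of $J(\lam)$ in \cite{KM} includes a fourth condition, $\Reff(x,y)\le\lam\,r(d(x,y))$, which is automatic here since $r(R)=R$ and $\Reff(x,y)\le d(x,y)$ on any graph; this is worth a sentence when matching the statement against \cite[Assumption 1.2]{KM}.
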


\begin{proof}
(a) is immediate from Theorem \ref{t:main2} and Proposition \ref{p:kmest}(a), while (b) is exactly Proposition \ref{p:kmest}(b). We note that since $r(R)=R$, the condition
$\Reff(x,y) \le \lam r (d(x,y))$ in \cite[Definition 1.1]{KM} 
always holds for $\lam \ge 1$, so that our definition of $J(\lam)$ 
agrees with that in \cite{KM}. \qed
\end{proof}

We will see that the time taken by the random walk $X$ to move a distance $R$
is of order $R g(R)^2$. We therefore define
\begin{equation}
  F(R)= R g(R)^2,
\end{equation}
and let $f$ be the inverse of $F$. We will prove that the heat kernel $\wp_T(x,y)$ is of order $g(f(T))^{-2}$ and so we let
\begin{equation}
k(t) =  g(f(t))^2, \q t \ge 1.
\end{equation}
Note that we have $f(t) k(t) = f(t) g(f(t))^2 = F( f(t))=t$, so 
\begin{equation}
 \frac{1}{k(t)} = \frac{1}{g(f(t))^2} = \frac{f(t)}{t}.
\end{equation}
Furthermore, since $G(R) \approx R^{5/4}$, we have
\begin{align}
G(R) \approx R^{5/4}, \quad g(R) &\approx R^{4/5}, \quad F(R) \approx R^{13/5}, \\
f(R) \approx R^{5/13}, \quad  k(R) &\approx R^{8/13}, \quad R^2 G(R) \approx R^{13/4}.
\end{align}

We now state our results for the SRW $X$ on $\sU$, giving the asymptotic behaviour of $d(0,X_n)$, the transition densities
$\wp^\om_{n}(x,y)$, and the exit times $\tau_R$ and $\wt \tau_r$. 
We begin with three theorems which follow directly from Proposition \ref{p:KMsat} and \cite{KM}.
The first theorem gives tightness for some of these quantities, the second theorem gives expectations
with respect to $\bP$, and the third theorem gives `quenched' limits which hold $\bP$-a.s. 
In various ways these results make precise the intuition that
the time taken by $X$ to escape from a 
ball of radius $R$ is of order $F(R)$, that $X$ moves a distance of order $f(n)$ in time $n$,
and that the probability of $X$ returning to its initial point after $2n$
steps is the same order as $1/|B(0, f(n))|$, that is $g(f(n))^{-2} = k(n)^{-1}$.

\begin{theorem} \label{ptight}
Uniformly with respect to $n\ge 1$, $R\ge 1$ and $r \ge 1$,
\begin{align}
\label{pt-a}
 \bP \Big(\theta^{-1}\le \frac{ E^0_\omega \tau_R}{F(R) }  \le \theta \Big) &\to 1
\quad \text{ as } \theta\to \infty, \\
\label{pt-a2}
 \bP \Big(\theta^{-1}\le \frac{ E^0_\omega \wt \tau_r}{r^2 G(r) }  \le \theta \Big) &\to 1
\quad \text{ as } \theta\to \infty, \\
\label{pt-b}
  \bP (\theta^{-1}\le k(n) p_{2n}^\omega(0,0)\le \theta) &\to 1
\quad \text{ as } \theta\to \infty, \\
\label{pt-d}
 P^* \Big( \theta^{-1} < \frac{1+d(0,X_n)}{ f(n)}   < \theta  \Big) &\to 1
\quad \text{ as } \theta\to \infty.
\end{align}
\end{theorem}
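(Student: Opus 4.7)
The plan is to invoke Proposition \ref{p:KMsat} together with the general machinery of Kumagai--Misumi \cite{KM}. That proposition verifies the hypotheses of Assumption 1.2 of \cite{KM} with $v(R) = g(R)^2$ and $r(R) = R$, so that the natural time-scale in their framework is exactly $F(R) = R g(R)^2$. With this input, three of the four tightness statements---namely \eqref{pt-a}, \eqref{pt-b}, and \eqref{pt-d}---are immediate consequences of the general theorems of \cite{KM}. What remains is to derive the Euclidean exit-time tightness \eqref{pt-a2} from \eqref{pt-a} together with Theorem \ref{t:main1}.

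For completeness I would first record the quantitative estimates that \cite{KM} supplies on the event $\{R \in J(0,\lam)\}$: one has $c(\lam) F(R) \le E^0_\omega \tau_R \le C(\lam) F(R)$, obtained via the commute-time / volume-resistance identity $E^0_\omega \tau_R \asymp |B_d(0,R)|\,\Reff(0, B_d(0,R)^c)$ and Definition \ref{jdef}; the diagonal heat-kernel bound $c(\lam) k(n)^{-1} \le \wp^\omega_{2n}(0,0) \le C(\lam) k(n)^{-1}$ at times $n \asymp F(R)$, via the standard Nash-type argument applied on $B_d(0,R)$; and the matching two-sided control on $d(0,X_n)$ in terms of $f(n)$. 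Combining each of these deterministic $\lam$-dependent estimates with the tail bound $\bP(R \notin J(0,\lam)) \le C e^{-c \lam^{1/9}}$ from Proposition \ref{p:KMsat}(a) and choosing $\lam = \lam(\theta)$ to grow slowly with $\theta$ yields uniform tightness in $R$ and $n$.

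For \eqref{pt-a2}, I would sandwich the Euclidean exit time between two intrinsic-metric exit times. Fix $r, \lam \ge 1$ and let $A(\lam)$ be the event that both $B_d(0, \lam^{-1} G(r)) \subset B(0,r)$ and $B(0,r) \subset B_d(0, \lam G(r))$ hold. By Theorem \ref{t:main1},
\[
 \bP(A(\lam)^c) \le C e^{-c \lam^{2/3}} + C \lam^{-4/15 + \eps}.
\]
On $A(\lam)$ one has the deterministic sandwich $\tau_{\lam^{-1} G(r)} \le \wt\tau_r \le \tau_{\lam G(r)}$. Using the scaling identity $F(G(r)) = G(r)\, g(G(r))^2 = r^2 G(r)$, an application of \eqref{pt-a} at the two radii $\lam^{\pm 1} G(r)$ then converts the tightness of $E^0_\omega \tau_R / F(R)$ into the tightness of $E^0_\omega \wt\tau_r / (r^2 G(r))$, at the cost of a polynomial factor in $\lam$ which is absorbed by sending $\theta \to \infty$.

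The main obstacle is the bookkeeping of the various polynomial and stretched-exponential losses in $\lam$ coming from Proposition \ref{p:KMsat}, Theorem \ref{t:main1}, and the deterministic $\lam$-dependence of the \cite{KM} estimates. One must choose $\lam = \lam(\theta)$ so that every error vanishes as $\theta \to \infty$, uniformly in $R$, $n$, and $r$. Since only tightness is required (not sharp constants), any choice in which $\lam$ grows slowly with $\theta$ works, and no estimates beyond those of Sections \ref{sect:LERW} and \ref{sect:UST} are needed.
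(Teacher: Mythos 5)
Your proposal is correct and follows the paper's overall structure: reduce \eqref{pt-a}, \eqref{pt-b}, \eqref{pt-d} to Proposition \ref{p:KMsat} and the general theorems of \cite{KM}, and then derive \eqref{pt-a2} by sandwiching $\wt\tau_r$ between intrinsic exit times. The one genuine divergence is in how you implement the sandwich. You condition on the event $A(\lam)=\{B_d(0,\lam^{-1}G(r))\subset B(0,r)\}\cap\{B(0,r)\subset B_d(0,\lam G(r))\}$ and control $\bP(A(\lam)^c)$ by Theorem \ref{t:main1}, which gives only a polynomial tail $C\lam^{-4/15+\eps}$ from \eqref{ei:Bd-lb}. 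The paper instead conditions on $H_1(r,\lam)=\{B_d(0,\lam^{-1}G(r))\subset U_r\subset B_d(0,\lam G(r))\}$, where $U_r$ is the connected component of $0$ in $\sU\cap B(0,r)$; this replaces the polynomial bound on $B(0,r)\not\subset B_d(0,\lam G(r))$ by the exponential bound of Proposition \ref{p:Ur} on $U_r\not\subset B_d(0,\lam G(r))$, and notes that $\wt\tau_r$ is precisely the exit time from $U_r$. The paper also handles the upper half of \eqref{pt-a2} via Markov's inequality applied to the mean bound \eqref{e:mEmean} rather than the sandwich. For the tightness statement alone, your weaker polynomial control is entirely sufficient, since one can take $\lam=\lam(\theta)$ growing slowly so that both $\bP(A(\lam)^c)\to 0$ and the polynomial loss $\lam^{\pm(13/5+O(\eps))}$ in converting between $F(\lam^{\pm 1}G(r))$ and $r^2G(r)$ is absorbed into $\theta$. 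The paper's sharper route pays off only for the a.s.\ statements of Theorem \ref{thm-rwre}, which require summable tails and are proved in the same block of text.
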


\begin{theorem} \label{t:means}
There exist positive constants $c$ and $C$ such that for all $n\ge 1$, $R \ge 1$, $r\ge 1$,
\begin{align}
\label{e:mmean}
    cF(R) &\le \bE (E^0_\omega\tau_R) \le C F(R) , \\
\label{e:mEmean}
    c r^2 G(r) &\le \bE (E^0_\omega \wt \tau_r) \le C r^2 G(r)  , \\
\label{e:pmean}
 c k(n)^{-1}  &\le \bE (p_{2n}^\omega(0,0))\le C k(n)^{-1} , \\
\label{e:dmean}
       c f(n) &\le \bE (E_\omega^0 d(0,X_n)).
\end{align}
\end{theorem}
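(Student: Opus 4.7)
My plan is to combine the volume and resistance estimates of Proposition \ref{p:KMsat} with standard random-walk machinery in the style of \cite{BCK, KM}, handling the rare bad events (where $R \in J(0,\lam)$ fails) by crude deterministic bounds or moment arguments. For \eqref{e:mmean}, I would first obtain the upper bound from the commute-time-style inequality
\[
 E^0_\omega \tau_R \;\le\; \Reff(0, B_d(0,R)^c) \sum_{y \in B_d(0,R)} \mu_y \;\le\; 2\,\Reff(0,B_d(0,R)^c)\,\abs{B_d(0,R)},
\]
which is valid since $\sU$ is a tree, and then take $\bP$-expectations and apply Proposition \ref{p:kmest}(b). For the lower bound, I would fix $\lam_0$ large enough that $\bP(R \in J(0,\lam_0)) \ge 1/2$; on this event I would invoke the Einstein-relation argument of \cite{BCK, KM}, which combines the volume and resistance lower bounds to produce the pointwise estimate $E^0_\omega \tau_R \ge c(\lam_0) F(R)$, and then integrate.

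For \eqref{e:mEmean}, I would apply Theorem \ref{t:main1} with a fixed large $\lam$ to obtain, on events of probability close to one, the inclusions $B_d(0,\lam^{-1}G(r)) \subset B(0,r) \subset B_d(0,\lam G(r))$, which give $\tau_{\lam^{-1}G(r)} \le \wt\tau_r \le \tau_{\lam G(r)}$ on the respective events. Combined with the identity $F(G(r))= G(r) g(G(r))^2 = r^2 G(r)$ and the bounds from the previous paragraph, this yields both sides of \eqref{e:mEmean} on the good events. On the rare complementary events I would fall back on the deterministic bound $E^0_\omega \wt\tau_r \le C r^2 \Reff(0,B(0,r)^c) \le C r^2 \wh M_r$ and control the contribution by Cauchy--Schwarz together with the exponential moments of $\wh M_r$ furnished by Theorem \ref{t:expLERW}.

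For \eqref{e:pmean}, on the event $f(n) \in J(0,\lam_0)$ I would use Cauchy--Schwarz for the lower bound (applied to $P^0_\omega(X_n \in B_d(0,cf(n)))$, which is close to one since $n$ is well below the expected exit time from a ball of radius $cf(n)$ for small $c$) and a Nash-type inequality for the upper bound, giving $c(\lam_0)k(n)^{-1} \le p^\om_{2n}(0,0) \le C(\lam_0) k(n)^{-1}$. The expected lower bound is then immediate; for the expected upper bound, the bad-event contribution is dominated by the trivial bound $p^\om_{2n}(0,0)\le 1$ multiplied by $\bP(f(n) \notin J(0,\lam)) \le C e^{-c\lam^{1/9}}$, and allowing $\lam$ to grow slowly with $n$ absorbs this term into the main one. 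Finally, \eqref{e:dmean} is almost immediate from \eqref{pt-d}: some fixed $\theta$ satisfies $P^*(d(0,X_n) \ge \theta^{-1} f(n)) \ge 3/4$, so Markov combined with $\bE[P^0_\omega(A)]=P^*(A)$ yields $\bE E^0_\omega d(0,X_n) \ge c f(n)$.

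The main obstacle will be the pointwise lower bound $E^0_\omega \tau_R \ge c(\lam_0) F(R)$ on $\{R \in J(0,\lam_0)\}$: the volume and resistance lower bounds do not individually imply a matching exit-time estimate, and combining them requires the harmonic-function/Einstein-relation argument of \cite{BCK, KM}. A secondary difficulty is the heat-kernel upper bound in expectation, where one must exploit the \emph{superpolynomial} (not merely polynomial) tail of the bad event to offset the fact that $p^\om_{2n}(0,0)$ can be of order $1$ on the complement; this is where the stretched-exponential decay in Proposition \ref{p:KMsat}(a) is used essentially rather than a mere polynomial bound.
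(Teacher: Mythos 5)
Your proposal is correct in outline, but it diverges from the paper in two notable ways.

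First, for \eqref{e:mmean}, \eqref{e:pmean} and \eqref{e:dmean} the paper does essentially no new work: Proposition \ref{p:KMsat} verifies the hypotheses of \cite{KM} (Assumption 1.2 there with $v(R)=g(R)^2$, $r(R)=R$), and these three bounds are then cited directly from Propositions 1.3, 1.4 and Theorem 1.5 of \cite{KM}. Your sketch (commute-time bound for the upper estimate, a resistance-plus-volume ``Einstein relation'' argument for the pointwise lower estimate on $\{R\in J(0,\lam_0)\}$, Cauchy--Schwarz and near-diagonal bounds for $p_{2n}$, Markov plus \eqref{pt-d} for \eqref{e:dmean}) is a reasonable re-derivation of what \cite{KM} proves, so it is not wrong, just much longer than what is needed. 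One small caution: to invoke Proposition \ref{p:kmtau}(a) on the good event you need $R/(4\lam_0)$ in $J(0,\lam_0)$ as well as $R$, so the good event must be chosen to include both scales.

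Second, and more substantively, for the upper bound in \eqref{e:mEmean} the paper uses a strictly simpler and unconditional argument than your good-event/Cauchy--Schwarz plan. The paper writes
$$ E^0_\om \wt\tau_r \le \Reff(0,U_r^c)\sum_{x\in U_r}\mu_x \le C r^2 \Reff(0,U_r^c), $$
observes that $\Reff(0,U_r^c)\le d(0,z)$ where $z$ is the first point of $\gam(0,\infty)$ outside $B(0,r)$, and then takes expectations directly: $\bE\,d(0,z)\le \bE\wh M_{r+1}\le C G(r)$. No decomposition into good and bad events is needed. Your route—sandwiching $\wt\tau_r$ between $\tau_{\lam^{-1}G(r)}$ and $\tau_{\lam G(r)}$ via Theorem \ref{t:main1}, then handling the polynomial-probability bad event by Cauchy--Schwarz against the exponential moments of $\wh M_r$—does close the argument, but it re-imports the bound $\bE(E^0_\omega\tau_R)\le CF(R)$ that you also had to establish, and the polynomial tail in Theorem \ref{t:main1} makes the bad-event bookkeeping delicate where it could simply be avoided. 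Your treatment of the lower bound of \eqref{e:mEmean} (restrict to a good event on which $B_d(0,\lam_0^{-1}G(r))\subset U_r$ and the exit-time lower bound applies, then integrate) matches the paper's argument.

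Net assessment: nothing in your plan would fail, but for the first three estimates you are re-proving a citation, and for the upper bound in \eqref{e:mEmean} the paper's direct expectation argument is both shorter and sharper in that it needs no auxiliary event.
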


\begin{theorem}
\label{thm-rwre} There exist $\al_i < \infty$, and a subset $\Omega_0$
with $\bP(\Omega_0)=1$ such that the following statements hold.\\
(a) For each $\omega \in \Omega_0$ and $x \in \bZ^2$ there
exists $N_x(\omega)< \infty$ such that
\begin{align}  
\label{e:logpnlima}
   (\log\log n)^{-\al_1} k(n)^{-1} \le
 p^\omega_{2n}(x,x) \le (\log\log n)^{\al_1}  k(n)^{-1}, \quad  n\ge N_x(\omega).
\end{align}
In particular, $d_s(\sU) = 16/13$, $\bP$-a.s. \\
(b)  For each $\omega \in \Omega_0$ and $x \in \bZ^2$ there
exists $R_x(\omega)< \infty$ such that
\begin{align}
 \label{e:logtaulima}
   (\log\log R)^{-\al_2} F(R) &\le E^x_\omega \tau_R
 \le  (\log\log R)^{\al_2} F(R), \quad  R\ge R_x(\omega), \\
  \label{e:logtaulimE}
   (\log\log r)^{-\al_3} r^2 G(r)  &\le E^x_\omega \wt\tau_r
 \le  (\log\log r)^{\al_3} r^2 G(r)^2, \quad  r \ge R_x(\omega).
\end{align}
Hence
\begin{align}
 d_w(\sU)= \lim_{R \to \infty} \frac{\log  E^x_\omega \tau_R}{\log R} =  \frac{13}{5}, \q
 \q \lim_{r \to \infty} \frac{\log  E^x_\omega \wt\tau_r}{\log r} =  \frac{13}{4}.
\end{align}
(c)  Let $Y_n= \max_{0\le k \le n} d(0,X_k)$.
For each $\omega \in \Omega_0$ and $x \in \bZ^2$
there exist $\ol N_x(\overline \omega)$, $\ol R_x(\overline \omega)$
such that $P^x_\omega(\ol N_x <\infty)=P^x_\omega(\ol R_x <\infty)=1$,
and such that
\begin{align}
 \label{e:ynlim}
 (\log\log n)^{-\al_4} f(n) &\le Y_n(\overline \omega)
 \le (\log\log n)^{\al_4}  f(n),
 \quad n \geq \ol N_x(\overline \omega), \\
   \label{e:rnlim}
 (\log\log R)^{-\al_4} F(R) &\le \tau_R(\overline \omega)
 \le (\log\log R)^{\al_4}  F(R), \quad\quad R \ge \ol R_x(\overline \omega), \\
   \label{e:rnlimE}
 (\log\log r)^{-\al_4} r^2G(r)  &\le \wt \tau_r(\overline \omega)
 \le (\log\log r)^{\al_4}  r^2G(r), \q r \ge R_x(\overline \omega).
\end{align}
(d) Let $W_n = \{X_0,X_1,\ldots, X_n\}$ and let $|W_n|$ denote its cardinality.
For each $\omega \in \Omega_0$ and $x \in \bZ^2$,
\begin{equation}
\label{e:snlim}
  \lim_{n \to \infty} \frac{\log |W_n|}{\log n} = \frac{8}{13}, \quad
 P^x_\omega \text{-a.s.}.
\end{equation}
\end{theorem}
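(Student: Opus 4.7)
The plan is to prove the sharper statement $|W_n| = n^{8/13 + o(1)}$, $P^x_\omega$-a.s., from which part (d) follows. I treat the upper and lower bounds separately; both are driven by the volume estimates from Section \ref{sect:UST} and the quenched heat-kernel bound of part (a).

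For the upper bound I use the tree structure of $\sU$. Because $X$ is a nearest-neighbour walk and $\sU$ is a tree, every vertex visited lies within graph-metric distance $Y_n = \max_{k \le n} d(X_0, X_k)$ of $X_0$, so $W_n \subseteq B_d(X_0, Y_n)$. Part (c) gives $Y_n \le (\log \log n)^{\alpha_4} f(n)$ for $n \ge \ol N_{X_0}(\ol\omega)$. A quenched polylogarithmic volume bound $|B_d(x, R)| \le (\log R)^{C} g(R)^2$ for $R \ge R_1(\omega)$, $\bP$-a.s., follows by applying \eqref{e:vlubpii} with $\lambda = c \log R$ and Borel--Cantelli over integer $R$. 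Since $g(f(n))^2 = k(n) \asymp n^{8/13}$, combining the two gives $|W_n| \le (\log n)^{C'} k(n)$, so $\limsup_n \log|W_n|/\log n \le 8/13$ almost surely.

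For the lower bound I use a local-time identity. Set $\ell_n(y) = \sum_{k=0}^{n} \ind_{X_k = y}$ and $Z_n = \sum_y \ell_n(y)^2$. Cauchy--Schwarz gives the deterministic bound
\begin{equation*}
(n+1)^2 = \Big(\sum_y \ell_n(y)\Big)^2 \le |W_n| \cdot Z_n,
\end{equation*}
so an upper bound on $Z_n$ yields a lower bound on $|W_n|$. Writing $Z_n = \sum_{0 \le j, k \le n} \ind_{X_j = X_k}$ and applying the Markov property at time $j$,
\begin{equation*}
E^x_\omega[Z_n] \le 2 \sum_{j=0}^{n} \sum_{m=0}^{n} E^x_\omega\bigl[\mu_{X_j}\, p^\omega_m(X_j, X_j)\bigr] \le C (\log n)^{C''}\, n \sum_{m=1}^{n} k(m)^{-1} \le C' (\log n)^{C''}\, n f(n),
\end{equation*}
where I use $\mu_y \le 4$, a uniform-in-$y$ strengthening of the heat-kernel bound from part (a), and $\sum_{m=1}^n k(m)^{-1} \asymp f(n)$ (since $k(m) \asymp m^{8/13}$). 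A Markov inequality then gives, for any fixed $\eps \in (0,1)$,
\begin{equation*}
P^x_\omega\bigl(|W_n| \le k(n) n^{-\eps}\bigr) \le \frac{k(n) n^{-\eps}\, E^x_\omega[Z_n]}{(n+1)^2} \le C (\log n)^{C''} n^{-\eps},
\end{equation*}
using the identity $k(n) f(n) = n$. This is summable along $n_j = 2^j$, so Borel--Cantelli gives $|W_{n_j}| \ge k(n_j) n_j^{-\eps}$ eventually, $P^x_\omega$-a.s. Monotonicity $|W_n| \ge |W_{n_j}|$ for $n \in [n_j, n_{j+1}]$, together with the regularity of $k$, extends this to all $n$, and letting $\eps$ run through a sequence $\eps_m \downarrow 0$ yields $\liminf_n \log|W_n|/\log n \ge 8/13$.

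The main obstacle is the uniform-in-$y$ heat-kernel bound used in the $E^x_\omega[Z_n]$ estimate: part (a) as stated gives a quenched bound $p^\omega_{2k}(y,y) \le (\log\log k)^{\alpha_1}/k(k)$ only for each fixed $y$, with a threshold $N_y(\omega)$ that could in principle grow with $y$. Upgrading this to a bound that is uniform over, say, $y \in B_d(0, (\log n)^C f(n))$ requires combining the tail bounds on $p^\omega_{2k}(y,y)$ underlying \eqref{e:pmean} with a union bound and a further Borel--Cantelli. Once that uniformization is in place, the rest of the argument is a bookkeeping exercise built entirely on the tree inclusion $W_n \subseteq B_d(0, Y_n)$, the quenched volume bounds, and the heat-kernel estimate \eqref{e:logpnlima}; the exponent $8/13 = d_f/d_w$ then comes out directly from the identity $k(n) f(n) = n$.
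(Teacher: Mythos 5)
Your proposal only addresses part~(d) of the theorem, taking parts~(a) and~(c) as given; parts~(a), (b), (c) are not argued at all, and part~(b) contains the statements about $\wt\tau_r$ which do require a genuinely separate argument in the paper (a comparison of $\wt\tau_r$ with $\tau_{\lam^{\pm 1}G(r)}$ on the event $H_1(r,\lam)$). So as a proof of the stated theorem the proposal is substantially incomplete; it is at best a proof of one part given the others.

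On part~(d) itself the comparison is interesting. The paper's proof is a citation: once Proposition~\ref{p:KMsat} verifies Assumption~1.2 of \cite{KM}, part~(d) follows directly from \cite[Theorem~1.5]{KM}. You instead give a self-contained second-moment argument: the upper bound $|W_n|\le|B_d(x,Y_n)|$ from the tree structure together with the quenched volume bound from \eqref{e:vlubpii}, and the lower bound from the Cauchy--Schwarz identity $(n+1)^2\le|W_n|\,Z_n$ with $Z_n=\sum_y\ell_n(y)^2$ bounded via on-diagonal heat kernel decay. This is a genuinely different, more elementary route that makes the mechanism visible rather than delegating it to \cite{KM}. The computations are correct as far as they go; note that $\sum_{m=1}^n k(m)^{-1}\asymp f(n)$ does not follow from $k(m)\asymp m^{8/13}$ (which is only an $\approx$ statement) but does follow from the two-sided power bounds in Lemma~\ref{lem:mc} (specifically \eqref{e:fpower}, which gives $C^{-1}\th^{2/3}\le k(\th R)/k(R)\le c^{-1}\th^{1/2}$).

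The obstacle you identify --- a quenched heat-kernel bound uniform over the points the walk might visit --- is real, but it is already solved by machinery in the paper: the events $A_x(\lam,n)$ of Section~\ref{sect:RW}, combined with Lemma~\ref{lem:alam}(a), give $p^\om_m(y,y)\le C_{12}(\lam)k(m)^{-1}$ simultaneously for all $y\in B(x,n)$ and $1\le m\le F(n)$; choosing $\lam_n=(\log n)^{18}$, the bound \eqref{e:apbnd} on $\bP(A_x(\lam,n)^c)$ is summable, so Borel--Cantelli gives $A_x(\lam_n,n)$ eventually. Since the walk is nearest-neighbour, $P^x_\om(X_j=y)=0$ for $|y-x|>n$ and $j\le n$, so the sum defining $E^x_\om[Z_n]$ only sees points where the uniform bound applies, and your estimate $E^x_\om[Z_n]\le C(\log n)^p\,nf(n)$ then holds on the good event with no further work. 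So the gap in part~(d) is fillable with material already in the paper; the larger problem remains that parts~(a), (b), (c) are unaddressed.
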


\sms The papers \cite{BJKS, KM} studied random graphs for which information
on ball volumes and resistances were only available from one point. 
These conditions were not strong enough to bound $E^0_\om d(0,X_n)$ or $\wp^\om_T(x,y)$ --
see \cite[Example 2.6]{BJKS}.
Since the UST is stationary, we have the same estimates available from every point $x$, 
and this means that stronger conclusions are possible.

\begin{thm} \label{t:dist}
There exist $N_0(\om)$ with $\bP( N_0 < \infty) =1$, $\alpha > 0$ and for 
all $q > 0$, $C_q$ such that 
 \begin{equation} \label{e:qdub}
  E^0_\om d(0, X_n)^q \le C_q f(n)^q  (\log n)^{\al q} \q \hbox{ for } n \ge N_0(\om).
 \end{equation}
Further, for all $n \geq 1$,
\begin{equation} \label{e:adub}
\bE(  E^0_\om d(0, X_n)^q)  \le C_q f(n)^q  (\log n)^{\al q}  . 
\end{equation}
\end{thm}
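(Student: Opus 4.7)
The goal is to bound $E^0_\om d(0, X_n)^q$, which requires sub-Gaussian-type control on the tail of $d(0, X_n)$ going beyond the tightness provided by Theorem \ref{ptight}. The strategy rests on two ingredients: first, the key fact emphasized in the text preceding the statement, that by translation invariance of the UST on $\bZ^2$, Proposition \ref{p:KMsat}(a) holds around every vertex $x$ with the same probability, not just around $0$; second, a chaining argument that converts expected exit time bounds into exponential tails for $\tau_R$. The inclusion $\{d(0,X_n) > R\} \subset \{\tau_R \le n\}$ then reduces the moment estimate to a tail bound on $\tau_R$.

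For the quenched bound \eqref{e:qdub}, I would first apply Proposition \ref{p:KMsat}(a) with $\lam = A(\log R)^9$ and Borel--Cantelli over integer $R$ to produce $N_0(\om) < \infty$, $\bP$-a.s., such that $R \in J(0, A(\log R)^9)$ for every integer $R \ge N_0(\om)$; this yields $E^0_\om \tau_R \ge cF(R)/(\log R)^C$ on this event. The main technical step is then to upgrade this expected exit time bound into a sub-Gaussian tail estimate of the form
\[ P^0_\om(\tau_R \le t) \le C \exp\bigl(-c(F(R)/t)^{1/(d_w-1)}\bigr), \q d_w = 13/5, \]
valid for $R \ge N_0(\om)$. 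This is the standard exponentiation of polynomial tail bounds via the Markov property: one decomposes the excursion up to $\tau_R$ into pieces, each of which must independently escape from a smaller ball starting from a visited point $X_{T_k}$. Stationarity of the UST is essential here because the good-event estimates must hold at the random restart points $X_{T_k}$; we control this exceptional set by a union bound using $\bP(R \notin J(X_{T_k},\lam)) \le Ce^{-c\lam^{1/9}}$, which is identical to the estimate at $0$.

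To assemble, split $E^0_\om d(0, X_n)^q = q \int_0^n R^{q-1} P^0_\om(d(0, X_n) > R) dR$ at the cutoff $R = f(n)(\log n)^\al$ for some $\al > 8/13$. The contribution below the cutoff is at most $(f(n)(\log n)^\al)^q$. Above the cutoff, writing $R = \lam f(n)$ with $\lam > (\log n)^\al$, Lemma \ref{l:vgrwth} gives $F(R)/n \asymp \lam^{13/5}$ up to polylogs, so the tail bound contributes a factor $\exp(-c\lam^{(13/5)(5/8)}) = \exp(-c\lam^{13/8})$, which is super-polynomially small in $n$ because $\al \cdot 13/8 > 1$. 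The resulting integral converges to $C_q f(n)^q$. The annealed bound \eqref{e:adub} follows by the same split, replacing Borel--Cantelli by direct integration over $\om$: the contribution from $\{R \notin J(0, \lam)\}$ is bounded by $n^q\bP(R \notin J(0,\lam)) \le n^q Ce^{-c\lam^{1/9}}$, which is negligible once $\lam = (\log n)^{c'}$ with $c'$ large. The main obstacle is establishing the sub-Gaussian tail bound on $\tau_R$ by chaining while handling both the dependence between successive excursions and the need for the Proposition \ref{p:KMsat}(a) estimate to hold at every visited point, which is precisely what the stationarity of $\sU$ buys us over the framework of \cite{BJKS, KM}.
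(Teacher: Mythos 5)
Your proposal follows essentially the same route as the paper: bound $P^0_\om(d(0,X_n)>R)$ by $P^0_\om(\tau_R\le n)$, prove a sub-Gaussian tail for $\tau_R$ by chaining (the paper's Proposition~\ref{p:tautail}), use stationarity plus a union bound and Borel--Cantelli to ensure the good-ball conditions are available at the chaining restart points, and then integrate the tail, splitting at a cutoff of order $f(n)$ times a polylog. One small imprecision worth fixing when writing this up: the union bound cannot be taken directly over the random points $X_{T_k}$ (the expression $\bP(R\notin J(X_{T_k},\lam))$ mixes the two probability spaces); the paper instead introduces the purely $\om$-measurable event $A(\lam,n)$ that all scales $R'\le n^2$ are $\lam$-good at every deterministic $y\in B(0,n^2)$, union-bounds over those $O(n^4)$ deterministic points (the $n^3$ factor in \eqref{e:apbnd}), and then notes that the walk stays inside this ball up to the relevant exit time, so the chaining hypotheses hold uniformly -- you should adopt this formulation.
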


Write $\Phi(T,x,x) = 0$, and for $x \neq y$ let 
\begin{equation} \label{e:Phidef}
\Phi(T,x,y)= \frac{d(x,y)}{G((T/ d(x,y))^{1/2})}.
\end{equation}

\begin{theorem} \label{t:hk}
There exists a constant $\al>0$ and r.v. $N_x(\om)$ with
\begin{equation}\label{e:Nxtail}
 \bP( N_x \ge n ) \le C e^{ -c (\log n)^2 }
\end{equation}
such that provided $ F(T) \vee |x-y| \ge N_x(\om)$ and $T \ge d(x,y)$, then
writing $A= A(x,y,T) =C  (\log( |x-y| \vee F(T)))^\alpha$,
\begin{align}\label{e:hkb2}
\frac{1} {A k(T)} \exp\Big( - A \Phi(T,x,y)   \Big) \le
 \wp_T(x,y) &\le \frac{A}{k(T)} \exp\Big( -  A^{-1} \Phi(T,x,y)   \Big).
\end{align}
\end{theorem}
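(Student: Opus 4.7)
The strategy is to combine the on-diagonal heat kernel bounds of Theorem \ref{thm-rwre}(a) with off-diagonal sub-Gaussian decay obtained by chaining along the unique geodesic $\gamma(x,y)$ in $\sU$. The key preparatory estimate I would establish is a quenched exit-time tail
\begin{equation*}
P^x_\om(\tau_R \le T) \le A \exp\bigl(-A^{-1} R/G((T/R)^{1/2})\bigr),
\end{equation*}
valid for $R$ above a disorder-dependent threshold. This follows by iterating a Markov estimate at $k$ consecutive dyadic scales: by \eqref{e:logtaulima}, $E^x_\om \tau_{R/k} \asymp F(R/k)$, so the probability of exiting $B_d(\cdot, R/k)$ in time much less than $F(R/k)$ is bounded away from $1$, giving $\beta^k$ whenever $k F(R/k) \gg T$; the critical choice yields $k \asymp R/G((T/R)^{1/2}) = \Phi(T,x,y)$.

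For the upper half of \eqref{e:hkb2}, I would use the decomposition $p_T(x,y) = \sum_z p_{T/2}(x,z) p_{T/2}(z,y) \mu_z$ with $d = d(x,y)$, split according to whether $z \in B_d(x, d/2)$ or not. The sum over $z \notin B_d(x,d/2)$ is bounded by a Cauchy--Schwarz-based estimate yielding $(A/k(T))\sqrt{P^x(\tau_{d/2} \le T/2)}$, using the on-diagonal bound $\wp_T(\cdot,\cdot) \le A/k(T)$ from \eqref{e:logpnlima}; the sum over $z \in B_d(x,d/2)$ is handled symmetrically since such $z$ satisfies $z \notin B_d(y,d/2)$. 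Inserting the exit-time tail above then gives the sub-Gaussian upper bound.

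The lower half is obtained by chaining. Setting $k = \lceil \Phi(T,x,y) \rceil$, I would split $\gamma(x,y)$ into $k$ consecutive sub-arcs $\gamma_1, \ldots, \gamma_k$ of $d$-length $\asymp d/k = G((T/d)^{1/2}) \asymp f(T/k)$, and apply Chapman--Kolmogorov:
\begin{equation*}
p_T(x,y) \ge \sum_{z_1 \in \gamma_1,\, \ldots,\, z_{k-1}\in \gamma_{k-1}} \prod_{j=0}^{k-1} p_{T/k}(z_j, z_{j+1})\, \mu_{z_{j+1}},
\end{equation*}
with $z_0 = x$ and $z_k = y$. Each one-step factor is bounded below by a near-diagonal estimate $p_{T/k}(u,v) \ge c/k(T/k)$, derived from the on-diagonal lower bound in \eqref{e:logpnlima} together with the volume control $|B_d(\cdot, f(T/k))| \le A\, k(T/k)$ from $J(x,\lam)$. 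Summing over the $\asymp d/k$ candidate $z_j \in \gamma_j$ at each step and multiplying gives a lower bound of the form $(c')^k / k(T)$ after regrouping, which produces the desired exponent.

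The main difficulty is securing disorder control uniform across the many scales invoked above, so that the polylog factor $A$ and the tail \eqref{e:Nxtail} can be achieved. Each of the volume, resistance, and mean exit-time inputs has a stretched-exponential failure probability $e^{-c\lam^{1/9}}$ by Proposition \ref{p:KMsat}(a); taking $\lam \asymp (\log n)^\kappa$ for a suitable $\kappa$, summing a union bound over a dyadic grid of scales up to $n$, and transferring from $x=0$ to every $x$ via the translation-invariance of $\bP$ would then produce the $e^{-c(\log n)^2}$ tail for $N_x(\om)$.
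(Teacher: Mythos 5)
Your overall architecture matches the paper's proof: a quenched exit-time tail obtained by iterating exit times, a Cauchy--Schwarz decomposition for the off-diagonal upper bound, a chaining argument for the lower bound, and disorder control by a union bound over scales with $\lam \asymp (\log n)^\kappa$ followed by Borel--Cantelli (this is exactly how the paper deduces Theorem \ref{t:hk} from Propositions \ref{p:tautail} and \ref{p:ubfix} via the events $A_x(\lam,n)$). However, the chaining step as you describe it has a quantitative error that makes the lower bound fail. You sum only over the $\asymp d/k$ points $z_j \in \gamma_j$ on the geodesic at each step, yielding a per-step factor $\asymp (d/k)/k(T/k)$. Since $d/k \asymp f(T/k) =: r$ while $k(T/k) = g(f(T/k))^2 \asymp r^{8/5}$, this factor is $\asymp r^{-3/5}$, which tends to zero as $r$ grows; the product of $k$ such factors is then far smaller than $c^k/k(T)$, and no choice of constants rescues it. The correct chaining must sum over full intrinsic balls $B_d(z_j, r)$, which contain $\asymp g(r)^2 \asymp k(T/k)$ points rather than the $\asymp r$ points of the geodesic arc, so that each one-step sum $\sum_{x_i \in B_i} \wp_{t_i}(x_{i-1},x_i)\mu_{x_i} \gtrsim g(r)^2/k(T/k) \asymp 1$. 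This is what the paper does in the proof of Proposition \ref{p:ubfix}.

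Two secondary issues. First, the near-diagonal lower bound $\wp_{T/k}(u,v) \ge c/k(T/k)$ does not follow from the on-diagonal lower bound \eqref{e:logpnlima} plus volume control alone: on-diagonal bounds by themselves say nothing about $\wp_T(u,v)$ for $v \ne u$. One needs an oscillation estimate such as $|\wp_T(x,y) - \wp_T(x,z)|^2 \le (c/T)\, d(y,z)\, p_{2\lfloor T/2\rfloor}(x,x)$ from \cite[Proposition 3.1]{KM}; this is how Lemma \ref{lem:alam}(b) is proved. Second, you cite \eqref{e:logtaulima} (the a.s.\ asymptotics of $E^x_\om\tau_R$ with $\log\log$ corrections) to justify that the exit probability at each scale is bounded away from one, but that first-moment asymptotic estimate at the base point is not sufficient: the iteration applies the exit-time lower tail at the random points $X_{T_k}$, so one needs the uniform control given by the event $A(\lam,n)$ together with Proposition \ref{p:kmtau}(b), which in turn rests on the on-diagonal and volume estimates from \cite{KM}, not just the first moment.
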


\begin{remark} {\rm 
If we had $G(n) \asymp n^{5/4}$ then since $d_f = 8/5$ and $d_w=1+d_f$,
we would have
\begin{equation}
 \Phi(T,x,y) \asymp \Big( \frac{d(x,y)^{d_w}}{T} \Big)^{1/(d_w-1)},
\end{equation}
so that, except for the logarithmic term $A$, the bounds in 
\eqref{e:hkb2} would be of the same form as those obtained in the diffusions
on fractals literature. 
} \end{remark}

\sms Before we prove Theorems \ref{ptight} -- \ref{t:hk}, we summarize some properties of
the exit times $\tau_R$.

\begin{proposition}\label{p:kmtau}
Let $\lam \ge 1$ and $x \in \bZ^2$. \\
(a) If $R, R/(4 \lam) \in J(x,\lam)$ then
\begin{equation}\label{e:fixtau}
 c_1(\lam) F(R)  \le E^x_\om \tau(x,R)  \le C_2(\lam) F(R). 
  \end{equation}
(b) Let $0< \eps \le c_3(\lam)$. Suppose that $R, \eps R, c_4(\lam)\eps R \in J(x, \lam)$.
Then
\begin{equation}\label{e:epstau}
   P^x_\om( \tau(x,R) < c_5(\lam) F(\eps R) ) \le C_6(\lam) \eps. 
\end{equation}
\end{proposition}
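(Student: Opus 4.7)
The plan is to follow the Barlow--Coulhon--Kumagai framework, adapted as in \cite{BJKS, KM}: once conditions (1)--(3) of $J(x,\lam)$ are available at the required scales, the bounds fall out of Green's function identities and (for part (b)) an excursion-counting argument.

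For both directions of (a), I would use the Green's function $g_R(x,y)$ of the walk killed on exiting $B_d(x,R)$, so that $E^x_\om \tau_R = \sum_y g_R(x,y)$. Reversibility plus the resistance identity $g_R(x,x) = \mu_x \Reff(x, B_d(x,R)^c)$ give
\[
 g_R(x,y) \;=\; \mu_y\, \Reff(x, B_d(x,R)^c)\, P^y_\om(\tau_x < \tau_R).
\]
For the upper bound: use $P^y_\om(\tau_x<\tau_R)\le 1$, the unit-flow estimate $\Reff(x, B_d(x,R)^c) \le R$, the uniform bound $\mu_y\le 4$ on $\bZ^2$, and condition (1) of $J(x,\lam)$, obtaining $E^x_\om \tau_R \le 4R\cdot \lam g(R)^2 = C_2(\lam) F(R)$.

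For the lower bound in (a), I would restrict the sum to $B':=B_d(x, R/(4\lam))$ and apply the network inequality $P^y_\om(\tau_x<\tau_R) \ge 1-\Reff(x,y)/\Reff(x, B_d(x,R)^c)$. Since $\sU$ is a tree, $\Reff(x,y) = d(x,y) \le R/(4\lam)$, while condition (3) of $J(x,\lam)$ gives $\Reff(x, B_d(x,R)^c) \ge \lam^{-1}R$; hence $P^y_\om(\tau_x<\tau_R) \ge 3/4$ on $B'$. Summing, invoking condition (2) of $J$ at scale $R/(4\lam)$ to get $|B'| \ge \lam^{-1} g(R/(4\lam))^2$, and using Lemma \ref{l:vgrwth} to pass from $g(R/(4\lam))$ to $g(R)$, yields $E^x_\om\tau_R \ge c(\lam) R g(R)^2 = c_1(\lam) F(R)$.

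For part (b), I would decompose the trajectory before $\tau_R$ into excursions from $x$. A single excursion escapes through $\partial B_d(x,R)$ with probability $p = (\mu_x \Reff(x, B_d(x,R)^c))^{-1} \le C\lam/R$, while the local-time identity $E^x L_R(x) = \mu_x \Reff(x, B_d(x,R)^c)$ combined with part (a) gives a returning excursion typical length of order $F(R)/R \asymp g(R)^2$. Consequently, with high probability, the number of excursions completed by time $s = c_5(\lam) F(\eps R)$ is at most $C(\lam) s/g(R)^2$, which by Lemma \ref{l:vgrwth} is bounded by $C(\lam)\eps^{13/5} R$. A union bound then yields $P^x_\om(\tau_R<s) \le C(\lam)\eps^{13/5} \le C_6(\lam)\eps$; the extra assumption $c_4(\lam)\eps R\in J(x,\lam)$ is used precisely to apply part (a) at the smaller scale $\eps R$ to supply the excursion-length lower bound used in the count. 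The main obstacle is this excursion-count step, since excursion lengths are correlated through the tree structure: upgrading the mean excursion length from part (a) into a high-probability bound on the number of completed excursions requires either a second-moment estimate on $L_R(x)$ or a direct concentration argument on returns to $x$ by time $s$.
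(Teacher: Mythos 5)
The paper itself does not supply a self-contained proof here; it simply cites \cite[Proposition 2.1]{BJKS} and \cite[Propositions 3.2, 3.5]{KM}. So the comparison is between your direct argument and what those references do. For part (a), your Green's function argument is correct and is essentially the one used in those references: the identity
$E^x_\om\tau_R=\sum_y \mu_y\,\Reff(x,B_d(x,R)^c)\,P^y_\om(\tau_x<\tau_R)$, the trivial bound $\Reff(x,B_d(x,R)^c)\le R$ on a tree, condition (1) of $J(x,\lam)$ for the upper bound, and conditions (2)--(3) together with $P^y_\om(\tau_x<\tau_R)\ge 1-\Reff(x,y)/\Reff(x,B_d(x,R)^c)$ for the lower bound (with Lemma \ref{l:vgrwth} to pass from scale $R/(4\lam)$ to $R$) are exactly the ingredients there. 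Part (a) is fine.

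Part (b) is where there is a genuine gap, and you diagnose it correctly yourself: you cannot convert ``$\tau_R<s$'' into ``at most $s/g(R)^2$ excursions have been completed'' using only the fact that the \emph{mean} excursion length is of order $g(R)^2$. Excursion lengths are heavily right-skewed, not concentrated, and are correlated through the tree, so the count of completed excursions by time $s$ can be far larger than $s$ divided by the mean. Without a concentration input this step simply fails, and no such input is available from the $J(x,\lam)$ conditions alone.

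The way the literature (in particular \cite{KM}) avoids this is to bound the excursion \emph{count} directly rather than through a time-per-excursion heuristic. Let $L_R$ denote the number of visits to $x$ before $\tau_R$ and $N_s$ the number of visits to $x$ up to time $s$; then $\{\tau_R<s\}\subset\{L_R\le N_s\}$. On the one hand $L_R$ is geometric with mean $\mu_x\Reff(x,B_d(x,R)^c)\ge\lam^{-1}R$ by condition (3), so $\bP(L_R\le m)\le C\lam m/R$. On the other hand $E^x_\om N_s=\sum_{n\le s}p^\om_n(x,x)\mu_x$ can be controlled via the on-diagonal heat kernel upper bound at scale $\eps R$, which is a consequence of the $J$-conditions at the scales $\eps R$ and $c_4(\lam)\eps R$ (this, rather than an application of part (a) at scale $\eps R$, is where those hypotheses enter). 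Combining the geometric tail for $L_R$ with a moment bound for $N_s$ and optimizing over $m$ gives \eqref{e:epstau} without ever needing control on individual excursion durations. As written, your argument for (b) does not close, and the missing ingredient is precisely this replacement of the excursion-length heuristic by a local-time/heat-kernel bound.
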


\proof
This follows directly from \cite[Proposition 2.1]{BJKS} and \cite[Proposition 3.2, 3.5]{KM}.
 \qed

\sm {\bf Proof of Theorems \ref{ptight}, \ref{t:means},
and \ref{thm-rwre}. }
All these statements, except those relating to $\wt\tau_r$,
follow immediately from Proposition \ref{p:KMsat} and 
Propositions 1.3 and 1.4 and Theorem 1.5 of \cite{KM}.
Thus it remains to prove 
\eqref{pt-a2}, \eqref{e:mEmean}, \eqref{e:logtaulimE} and \eqref{e:rnlimE}.
By the stationarity of $\sU$ it is enough to consider the case $x=0$.

Recall that $U_r$ denotes the connected component of $0$ in $\sU \cap B(0,r)$, and therefore
$$ \wt \tau_r = \min\{ n \ge 0: X_n \not\in U_r\}. $$
Let 
$$ H_1(r, \lam) = \{  B_d(0, \lam^{-1} G(r)) \subset U_r \subset
 B_d(0, \lam G(r) \}. $$
On $H_1(r, \lam)$ we have
\begin{equation} \label{e:tauineq}
 \tau_{\lam^{-1} G(r) } \le \wt \tau_r \le \tau_{\lam G(r) }, 
 \end{equation}
while by Theorem \ref{t:Vub} and Proposition \ref{p:Ur} we have
for $r \ge 1$, $\lam \ge 1$,
$$ \bP( H_1(r, \lam) ^c ) \le e^{ - c \lam^{2/3} }. $$
The upper bound in \eqref{pt-a2} will follow from \eqref{e:mEmean}. For the lower bound,
on $H_1(r,\lam)$ we have, writing $R = \lam^{-1} G(r)$,
\begin{align}
 \frac{ E^0_\om \wt \tau_r }{r^2 G(r) } \ge  \frac{ E^0_\om \tau_{R} }{ F(R) } \cdot  \frac{ F(R)}{r^2 G(r) },
 \end{align}
while $F(R)/ r^2 G(r) \ge \lam^{-3}$ by Lemma \ref{l:vgrwth}.
So
\begin{align}
\bP\Big(  \frac{ E^0_\om \wt \tau_r }{r^2 G(r) } < \lam^{-4} \Big)
\le \bP ( H_1(r,\lam)^c) + \bP\Big(   \frac{ E^0_\om \tau_{R} }{ F(R) } < \lam^{-1}\Big ),
\end{align}
and the bound on the lower tail in \eqref{pt-a2} follows from \eqref{pt-a}.

\sms We now prove the remaining statements in Theorem \ref{thm-rwre}.
Let $r_k=e^k$, and $\lam_k = a( \log k)^{3/2}$, and choose $a$ large 
enough so that
$$ \sum_k  \exp( - c \lam_k^{2/3}) < \infty. $$
Hence by Borel-Cantelli there exists a r.v. $K(\om)$ with
$\bP(K< \infty)=1$ such that
$H_1(r_k, \lam_k)$ holds for all $k \ge K$. 
So if $k$ is sufficiently large, and $\alpha_2$ is as in \eqref{e:logtaulima},
\begin{align*}
  E^0_\om \wt \tau_{r_k} \le E^0_\om \tau_{\lam_k G(r_k)}
 &\le [\log \log (\lam_k G(r_k))]^{\al_2} \lam_k G(r_k) g( \lam_k G(r_k))^2 \\
&\le C (\log k)^{\al_3} r_k^2  G(r_k) \\
&= C (\log \log r_k)^{\al_3} r_k^2 G(r_k).
\end{align*}
Since $\wt \tau_r$ is monotone in $r$,
the upper bound in \eqref{e:logtaulimE} follows.
A very similar argument gives the lower bound, and also \eqref{e:rnlimE}.

It remains to prove \eqref{e:mEmean}. 
A general result on random walks (see e.g. \cite{BJKS}, (2.21))
implies that
$$ E^0_\om \wt \tau_r \le \Reff(0, U_r^c) \sum_{x \in U_r} \mu_x \leq C r^2 \Reff(0, U_r^c) . $$
Let $z$ be the first point on the path $\gam(0,\infty)$ outside
$B(0,r)$. Then $\Reff(0, U_r^c) \le d(0, z)$, and since
 $\gam(0,\infty)$ has the law of an infinite LERW, 
$\bE d(0,z) \le \bE \wh M_{r+1} \le C G(r)$. Hence
$$ \bE  (E^0_\om \wt \tau_r)  \le C r^2 G(r). $$

For the lower bound, let
$$ H_2(r, \lam) =\{   \lam^{-1}G(r), (2 \lam)^{-2}G(r) \in J(\lam) \}. $$
Choose $\lam_0$ large enough so that $\bP( H_1(\lam_0, r) \cap H_2(\lam_0, r)) \ge \fract12$.
If $H_2(r, \lam_0)$ holds then by Proposition \ref{p:kmtau}, writing 
$R = \lam_0^{-1} G(r)$,
$$ E^0_\om \tau_R  \ge c(\lam_0) R g(R)^2. $$
So, since $R g(R)^2 \ge c(\lam_0) r^2 G(r)$,
\begin{align*}
 \bE E^0_\om \wt \tau_r &\ge  \bE( E^0_\om \wt \tau_r; H_1(\lam_0, r) \cap H_2(\lam_0, r) )\\
 &\ge  \bE( E^0_\om \tau_{R} ; H_1(\lam_0, r) \cap H_2(\lam_0, r) )\\
 &\ge \fract12 c(\lam_0) R g(R)^2 \\
 &\ge c(\lam_0) r^2 G(r). 
 \end{align*}
 \qed

\sms We now turn to the proofs of Theorems \ref{t:dist} and \ref{t:hk}, and 
begin with a slight simplification of Lemma 1.1 of \cite{BBSC}.

\begin{lemma}\label{lem:bbi}
There exists $c_0 > 0$ such that the following holds. Suppose  we have nonnegative r.v. $\xi_i$ which satisfy, for some $t_0>0$,
$$ \bP( \xi_i \le t_0 | \xi_1, \dots, \xi_{i-1} ) \le \fract12. $$
Then 
\begin{equation}
 \bP( \sum_{i=1}^n \xi_i < T ) \le  \exp( - c_0 n + T/t_0  ). 
\end{equation}
\end{lemma}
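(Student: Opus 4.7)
The plan is to use a standard exponential Chernoff-type argument. The key observation is that the hypothesis $\bP(\xi_i \le t_0 \mid \sF_{i-1}) \le 1/2$ (where $\sF_{i-1} = \sigma(\xi_1,\ldots,\xi_{i-1})$) gives a uniform conditional upper bound on $\bE[e^{-\xi_i/t_0} \mid \sF_{i-1}]$ strictly less than $1$, and then the product structure of $e^{-\sum \xi_i / t_0}$ combines multiplicatively.

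First I would apply Markov's inequality in the form
\begin{equation*}
 \bP\Big( \sum_{i=1}^n \xi_i < T \Big)
 = \bP\big( e^{-\sum_i \xi_i/t_0} > e^{-T/t_0} \big)
 \le e^{T/t_0}\, \bE\big[ e^{-\sum_{i=1}^n \xi_i/t_0} \big].
\end{equation*}
Next, I would bound the conditional expectation of one factor. Since $e^{-x/t_0} \le \ind_{\{x \le t_0\}} + e^{-1}\ind_{\{x > t_0\}}$ for $x \ge 0$, the hypothesis gives
\begin{equation*}
 \bE\big[ e^{-\xi_i/t_0} \,\big|\, \sF_{i-1} \big]
 \le \bP(\xi_i \le t_0 \mid \sF_{i-1}) + e^{-1}\, \bP(\xi_i > t_0 \mid \sF_{i-1})
 \le \tfrac{1}{2} + \tfrac{1}{2}e^{-1} =: \rho < 1.
\end{equation*}

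Then a straightforward induction on $n$, conditioning successively on $\sF_{n-1}, \sF_{n-2}, \ldots$ and pulling out the $\sF_{i-1}$-measurable factor $e^{-\sum_{j<i} \xi_j/t_0}$ at each step, yields $\bE[e^{-\sum_{i=1}^n \xi_i/t_0}] \le \rho^n$. Substituting back gives
\begin{equation*}
 \bP\Big( \sum_{i=1}^n \xi_i < T \Big) \le \rho^n\, e^{T/t_0} = \exp\big(-c_0 n + T/t_0\big),
\end{equation*}
where $c_0 = \log(1/\rho) = \log\!\big(2/(1+e^{-1})\big) > 0$.

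There is essentially no obstacle here: the argument is the usual exponential moment / Chernoff trick adapted to a martingale-style conditional hypothesis, and the only minor point is verifying that the naive bound $e^{-x/t_0} \le \ind_{\{x \le t_0\}} + e^{-1}\ind_{\{x > t_0\}}$ together with the conditional tail bound produces a constant strictly less than $1$, which it does. The value of $c_0$ given by this argument is not optimal, but the statement only asks for the existence of some positive $c_0$, so no sharpening is needed.
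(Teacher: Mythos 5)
Your proof is correct and is essentially identical to the paper's: both use the exponential moment bound with parameter $\theta = 1/t_0$, derive the conditional estimate $\bE[e^{-\xi_i/t_0}\mid\sF_{i-1}] \le \tfrac12(1+e^{-1})$ from the two-point bound $e^{-x/t_0}\le \ind_{\{x\le t_0\}}+e^{-1}\ind_{\{x>t_0\}}$, and then apply Markov's inequality. The constant $c_0=\log\big(2/(1+e^{-1})\big)$ matches the paper's as well.
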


\proof Write $\sF_i = \sigma(\xi_1, \dots \xi_i)$.
Let $\th = 1/t_0$, and let $e^{-c_0} = \fract12 (1 + e^{-1})$. Then
\begin{align*}
 \bE (e^{-\th \xi_i} | \sF_{i-1}) &\le \bP( \xi_i < t_0 | \sF_{i-1} ) + e^{-\th t_0} \bP( \xi_i > t_0  | \sF_{i-1}) \\
  &= \bP( \xi_i < t_0 | \sF_{i-1} ) ( 1- e^{-\th t_0} ) +  e^{-\th t_0} \\
  &\le \fract12 ( 1 + e^{-\th t_0} ) = e^{-c_0}.
\end{align*}
Then
\begin{align*}
 \bP(  \sum_{i=1}^n \xi_i < T ) =  \bP(  e^{-\th \sum_{i=1}^n \xi_i} > e^{-\th T} ) 
 \le e^{\th T} \bE(  e^{-\th \sum_{i=1}^n \xi_i} )
\le e^{\th T}  e^{-n c_0}. 
\end{align*}
\qed 

We also require the following lemma which is an immediate consequence
of the definitions of the functions $F$ and $G$.

\begin{lemma}\label{lem:mc}
Let $R\ge 1$, $T \ge 1$, and 
\begin{equation}\label{e:msat5}
  b_0= \frac{R}{ G((T/R)^{1/2})}.
\end{equation}
Then,
\begin{align} \label{e:m41}
 {R}/{b_0} &= G( (T/R)^{1/2}) = f(T/b_0), \\
 \label{e:m42}
b \le b_0 &\Leftrightarrow T/b \le F(R/b)  
\Leftrightarrow f(T/b) \le R/b. 
\end{align}
Also, if $\th<1$ and $\th R \ge 1$, then
\begin{align} \label{e:Fpower}
 c_7 \th^3 F(R) \le F(\th R) \le C_8 \th^2 F(R), \\
 \label{e:fpower}
  c_7 \th^{1/2} f(R) \le f(\th R) \le C_8 \th^{1/3} f(R).
 \end{align}
\end{lemma}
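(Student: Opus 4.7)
The plan is to verify each of the four lines in turn; none requires any serious new idea beyond the defining relations $F(u)=u\,g(u)^2$, $G\circ g = g\circ G = \mathrm{id}$, and the polynomial estimate Lemma \ref{l:vgrwth}.

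First I would handle \eqref{e:m41}. Set $s=(T/R)^{1/2}$. The definition of $b_0$ is exactly $R/b_0=G(s)$. Then $T/b_0 = T\,G(s)/R = s^2\,G(s)$, while
\begin{equation*}
F\bigl(G(s)\bigr) = G(s)\cdot g(G(s))^2 = G(s)\cdot s^2.
\end{equation*}
Applying $f = F^{-1}$ gives $f(T/b_0)=G(s)$, which matches $R/b_0$.

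Next, for \eqref{e:m42}, the second equivalence $T/b\le F(R/b)\Leftrightarrow f(T/b)\le R/b$ is immediate from the strict monotonicity of $F$ (and hence $f$). For the first, substitute $u=R/b$, so that $T/b = Tu/R$ and the inequality $T/b\le F(R/b)$ becomes $Tu/R\le u\,g(u)^2$, i.e.\ $(T/R)^{1/2}\le g(u)$. Since $g=G^{-1}$ is strictly increasing and positive, this is equivalent to $G\bigl((T/R)^{1/2}\bigr)\le u = R/b$, which, multiplying by $b/R$, is $b\le R/G((T/R)^{1/2})=b_0$.

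Third, the power bounds \eqref{e:Fpower}–\eqref{e:fpower} are just crude versions of Lemma \ref{l:vgrwth}. For $\theta\in(0,1)$ with $\theta R\ge 1$, apply Lemma \ref{l:vgrwth} with $\lambda=1/\theta>1$ and $r=\theta R$, obtaining, for any fixed small $\eps$ (take $\eps=1/10$),
\begin{equation*}
c\,\theta^{4/5+\eps}\,g(R)\le g(\theta R)\le C\,\theta^{4/5-\eps}\,g(R).
\end{equation*}
Multiplying by $\theta R$ and squaring the $g$-factor yields
\begin{equation*}
c\,\theta^{13/5+2\eps}\,F(R)\le F(\theta R)\le C\,\theta^{13/5-2\eps}\,F(R),
\end{equation*}
and the choice $\eps=1/10$ sandwiches both exponents inside $[3,2]$ (since $0<\theta<1$ reverses the order of exponents). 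For \eqref{e:fpower}, I would invert: given $\theta R\ge 1$, writing $y=f(R)$ and seeking $f(\theta R)$, the already proved $F$-bounds imply that $F(a\,y)$ lies between $c\,a^{13/5+2\eps}R$ and $C\,a^{13/5-2\eps}R$ for $a\in(0,1)$; solving these for $a$ so that the two sides equal $\theta R$ and applying monotonicity of $f$ gives $c\,\theta^{5/(13-10\eps)}f(R)\le f(\theta R)\le C\,\theta^{5/(13+10\eps)}f(R)$, and with $\eps=1/10$ the exponents satisfy $5/12\le 1/2$ and $5/14\ge 1/3$, yielding the claimed bounds.

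There is no real obstacle here; the only subtlety is keeping the direction of inequalities straight when inverting via $\lambda=1/\theta$ and when translating between $F$ and $f=F^{-1}$, and checking that the crude exponents $3,2,1/2,1/3$ are on the correct sides of the sharp exponents $13/5$ and $5/13$ when $\theta<1$.
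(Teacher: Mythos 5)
Your proof is correct, and since the paper itself simply asserts that the lemma is ``an immediate consequence of the definitions'' with no written proof, your argument fills in exactly the expected routine verification: the identity \eqref{e:m41} via $F(G(s)) = G(s) s^2$ with $s = (T/R)^{1/2}$, the equivalences \eqref{e:m42} by substitution and monotonicity of $G$, $g$, $F$, $f$, and the power bounds \eqref{e:Fpower}--\eqref{e:fpower} by specializing Lemma \ref{l:vgrwth} to a fixed $\eps$ (your $\eps = 1/10$ works: the resulting exponents $14/5$, $12/5$, $5/14$, $5/12$ all sit on the correct side of $3$, $2$, $1/3$, $1/2$ once the reversal for $\th < 1$ is accounted for). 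The one implicit assumption you should flag is $T \ge R$, which is needed for $(T/R)^{1/2} \ge 1$ so that $G((T/R)^{1/2})$ is defined; this is suppressed in the lemma's statement as well and holds in every application in the paper.
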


\sms

For $x \in \bZ^2$, let 
\begin{align*}
 A_x(\lam, n) =\{\om:  R' \in J(y, \lam) \, \hbox { for all } y \in B(x, n^2), 
1 \le R' \le n^2 \}.
\end{align*}
and let $A(\lam, n) = A_0(\lam,n)$.

\begin{proposition} \label{p:tautail}
Let $\lam \geq 1$ and suppose that $1 \leq R \le n$, 
\begin{equation} \label{e:Tcond}
T \ge  C_{9}(\lam) R, 
\end{equation} 
and $A(\lam, n)$ occurs.
Then,
\begin{equation}\label{e:tautail2}
 P^0_\om( \tau_R < T ) \le  C_{10}(\lam)  \exp\left(- c_{11}(\lam) \frac{R}{G( (T/R)^{1/2}) }  \right). 
\end{equation}
\end{proposition}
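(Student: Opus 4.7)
The exponent $b_0 := R/G((T/R)^{1/2})$ has a clean interpretation via Lemma \ref{lem:mc}: if $\rho_0 := R/b_0 = G((T/R)^{1/2})$, then $T/b_0 = F(\rho_0)$, so $\rho_0$ is the graph-distance scale on which the walk typically spends time $T/b_0$, and one needs $b_0$ such steps to cover the full distance $R$. Escaping $B_d(0,R)$ by time $T$ therefore demands $b_0$ successive anomalously fast excursions at scale $\rho_0$, each of controlled probability. The plan is to implement this via a chaining argument: break the path into ball-exit increments, apply the one-scale fast-exit bound of Proposition \ref{p:kmtau}(b) at each increment, and combine via the sub-exponential chaining Lemma \ref{lem:bbi}.

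\textbf{Setup.} Fix $\epsilon = \epsilon(\lambda)$ small enough that $C_6(\lambda)\epsilon \le 1/2$, and set $\rho = K(\lambda)\rho_0$ with $K(\lambda) \ge 1$ to be chosen. Define $\sigma_0 = 0$ and
\[
\sigma_i = \inf\{n \ge \sigma_{i-1} : d(X_n, X_{\sigma_{i-1}}) > \rho\}, \qquad \xi_i = \sigma_i - \sigma_{i-1},
\]
and let $k = \lfloor b_0/(3K(\lambda))\rfloor$. Since each $X_m$ with $\sigma_{i-1} \le m \le \sigma_i$ lies within graph-distance $\rho+1$ of $X_{\sigma_{i-1}}$, the triangle inequality gives $d(0, X_m) \le k(\rho+1) \le R$ for $m \le \sigma_k$, so $\{\tau_R < T\} \subseteq \{\sigma_k < T\}$. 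The hypothesis $T \ge C_9(\lambda) R$ combined with the monotonicity of $G$ ensures that $\rho, \epsilon\rho, c_4(\lambda)\epsilon\rho$ all lie in $[1, n^2]$; moreover $|X_{\sigma_{i-1}}| \le d(0, X_{\sigma_{i-1}}) \le R \le n$, so on the event $A(\lambda,n)$ the hypotheses of Proposition \ref{p:kmtau}(b) hold at the random centre $X_{\sigma_{i-1}}$.

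\textbf{Chaining and balancing.} By the strong Markov property at $\sigma_{i-1}$ and Proposition \ref{p:kmtau}(b), with $t_0 := c_5(\lambda) F(\epsilon\rho)$,
\[
P^0_\omega\bigl(\xi_i \le t_0 \,\bigm|\, X_0,\ldots,X_{\sigma_{i-1}}\bigr) \le C_6(\lambda)\epsilon \le \tfrac12,
\]
so Lemma \ref{lem:bbi} yields $P^0_\omega(\sigma_k < T) \le \exp(-c_0 k + T/t_0)$. Using $F(\rho_0) = T/b_0$ from \eqref{e:m41} together with the polynomial growth of $F$ from \eqref{e:Fpower} (applied with $\theta = 1/(\epsilon K(\lambda)) \le 1$) gives $F(\epsilon\rho) \ge c(\lambda) K(\lambda)^2 T/b_0$, so $T/t_0 \le b_0/(c(\lambda) K(\lambda)^2)$; meanwhile $c_0 k \ge c_0 b_0/(4K(\lambda))$ once $b_0$ is large enough, with small values of $b_0$ absorbed into $C_{10}(\lambda)$. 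Choosing $K(\lambda)$ large enough that $c_0/(4K(\lambda)) > 2/(c(\lambda) K(\lambda)^2)$ makes the first term dominate, and the exponent collapses to $-c_{11}(\lambda) b_0$, which is \eqref{e:tautail2}. The main obstacle is bookkeeping: the one-scale estimate of Proposition \ref{p:kmtau}(b) carries $\lambda$-dependence on both sides of the chaining, and the argument closes only because coarsening the working scale from $\rho_0$ to $K(\lambda)\rho_0$ buys a quadratic gain $K(\lambda)^2$ in $t_0$ against only a linear loss $1/K(\lambda)$ in $k$.
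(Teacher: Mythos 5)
Your proof is correct and follows essentially the same route as the paper's. The paper breaks the path into $m=\lfloor\theta b_0\rfloor$ ball-exit increments of radius $R'=R/m\approx\rho_0/\theta$, applies Proposition \ref{p:kmtau}(b) with the scales guaranteed by $A(\lam,n)$, and combines via Lemma \ref{lem:bbi}, balancing the exponent exactly as you do by exploiting the polynomial growth of $F$ to make $c_0 m$ dominate $T/t_0$; your parameterization via $\rho=K(\lambda)\rho_0$, $k=\lfloor b_0/(3K)\rfloor$ is a reindexing of the same construction, and your closing observation about the $K$ vs.\ $K^2$ tradeoff is precisely the mechanism behind the paper's choice of $\theta$.
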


\proof 
In this proof, the constants $c_i(\lam)$, $C_i(\lam)$ for $1\le i \le 8$ 
will be as in Proposition \ref{p:kmtau} and Lemma \ref{lem:mc}, 
and $c_0$ will be as in Lemma \ref{lem:bbi}.
We work with the probability $P^0_\om$, so that $X_0=0$.

Let $b_0= R/ G((T/R)^{1/2})$ be as in \eqref{e:msat5}, and define the quantities
\begin{eqnarray*}
\eps = (2C_6(\lam))^{-1}, \qquad & \qquad \th = \frac14 C_8^{-1} c_0 c_5(\lam) \eps^2, 
 \qquad & \qquad C^*(\lam) = 2\th^{-1},  \\
 m = \lfloor \th b_0 \rfloor,  \qquad & \qquad R' = R/m,  
\qquad & \qquad  t_0 = c_5(\lam) F(\eps R').
\end{eqnarray*}

We now establish the key facts that we will need about the quantities defined above. 
We can assume that $b_0 \ge C^*(\lam)$ for if $b_0 \le C^*(\lam)$, then by adjusting the 
constants $C_{10}(\lam)$ and $c_{11}(\lam)$ we will still obtain \eqref{e:tautail2}. Therefore, 
\begin{equation} \label{e:msat2}
1 \leq \fract12 \th b_0 \leq  m \le \th b_0.
\end{equation}
Furthermore, 
since $m / \th \leq b_0$, $\th R /m = G((T/R)^{1/2}) \geq 1$ and $\theta/\eps < 1$, 
we have by Lemma \ref{lem:mc} that
$$  T/m \le \th^{-1} F(\th R/m) \le C_8 \th \eps^{-2} F( \eps R/m)
\le  \fract14 c_0 c_5(\lam)  F( \eps R/m) = \frac{1}{4} c_0 t_0. $$
Therefore,
\begin{equation} \label{e:msat3}
T/t_0 < \half c_0 m.
\end{equation}
Finally, we choose 
$$ C_{9}(\lam) \geq g(c_4(\lam)^{-1} \eps^{-1} \th)^2,$$
so that if $T/R \geq C_{9}(\lam)$,
then
$$ G((T/R)^{1/2}) \geq c_4(\lam)^{-1} \eps^{-1} \th,$$
and therefore
\begin{equation} \label{e:msat1}
c_4(\lam) \eps R' \geq c_4(\lam) \eps R \theta^{-1} b_0^{-1} \geq 1.
\end{equation}

Having established \eqref{e:msat2}, \eqref{e:msat3} and \eqref{e:msat1}, the proof of the 
Proposition is straightforward. Let $\sF_n = \sigma(X_0, \dots, X_n)$.
Define stopping times for $X$ by 
\begin{align*}
 T_0 &= 0, \\
 T_k &= \min \{ j \ge T_{k-1}: X_j \not\in B_d( X_{T_{k-1}}, R'-1) \},
\end{align*}
and let $\xi_k = T_k - T_{k-1}$. Note that $T_m \leq \tau_R$, and that if $k \le m$, then 
$$ X_{T_k} \in B_d(0, kR' ) \subset B_d(0,n) \subset B(0,n).$$
Therefore, since \eqref{e:msat1} holds and $A(\lam, n)$ occurs, 
we can apply Proposition \ref{p:kmtau} to obtain that  
\begin{equation*}
 P^0_\om( \xi_k < c_5(\lam) F(\eps R' )| \sF_{k-1} ) \le C_6(\lam) \eps = \fract12.  
\end{equation*}
Hence by Lemma \ref{lem:bbi} and \eqref{e:msat3},
\begin{align*}
 P^0_\om( \tau_R < T ) &\le  P^0_\om( \sum_1^m \xi_i < T ) \\
 &\le \exp( -c_0 m + T/t_0) \\
&\le \exp(-c_0/2 m) \\
&\le \exp\left(- c_{11}(\lam) \frac{R}{G( (T/R)^{1/2}) }  \right).  
\end{align*}

\qed

\sm {\bf Proof of Theorem \ref{t:dist} } 
We will prove Theorem \ref{t:dist} with $T$ replacing $n$. Let $R = f(T)$; 
we can assume that $T$ is large enough so that $R \ge 2$.  
We also let $C_9(\lam)$, $C_{10}(\lam)$ and $c_{11}(\lam)$ be as in 
Proposition \ref{p:tautail}, and let $p > 0$ be such that 
$C_{i}(\lam) \leq C \lam^{p}$, $i=9,10$ and $c_{11}(\lam) \geq c \lam^{-p}$.

We have
\begin{align} \nn
 E^0_\om d(0, X_T)^q &\le R^q 
  + E^0_\om\Big(  \sum_{k=1}^\infty 1_{( e^{k-1} R \le d(0, X_T) < e^k R)} d(0, X_T)^q \Big)\\
  \label{e:d-est}
  &\le R^q + R^q  \sum_{k=1}^\infty e^{kq} P^0_\om(  e^{k-1} R \le  d(0, X_T) \le e^{k} R ) . 
\end{align}

By \eqref{e:kmest} we have
\begin{equation} \label{e:apbnd}
 \bP( A(\lam,n)^c ) \le 4 n^3 e^{-c\lam^{1/9} } \le \exp( -c \lam^{1/9} + C \log n ). 
\end{equation}
Let $\lam_k = k^{10}$. Then 
$\sum_k \bP( A(\lam_k, e^k)^c) < \infty$, and so by Borel-Cantelli there exists
$K_0(\om)$ such that $A(\lam_k, e^k)$ holds for all $k \ge K_0$.
Furthermore, we have 
\begin{equation} \label{e:Ktail}
 \bP( K_0 \ge n ) \le  C e^{-c n^{10/9}}.
\end{equation}

Suppose now that $k \geq K_0$. To bound the sum \eqref{e:d-est}, we consider two ranges of $k$. 
If $C_{9}(\lam_k)e^{k-1} R > T$, then we let $A_k = B_d(0, e^k R) - B_d(0, e^{k-1} R)$, 
and by the Carne-Varopoulos bound (see \cite{Ca}),
\begin{align} \nn
 e^{kq} P^0_\om(  e^{k-1} R \le  d(0, X_T) \le e^{k} R ) 
 &\le   e^{kq} \sum_{y \in A_k } P^0_\om(X_T =y) \\ \nn
 &\le   e^{kq} \sum_{y \in A_k } C \exp( - d(0,y)^2/2T )\\ \nn
 &\le C e^{kq} (e^k R)^2 \exp( - (e^{k-1} R)^2/2T ) \\ \nn
 &\le C \exp( - C_{9}(\lam_k)^{-1} e^kR + 2 \log(e^k R) + kq  )\\ \label{e:dsum1}
 &\le C \exp( - c k^{-10p} e^k  + C_q k  ).
 \end{align}

On the other hand, if $C_{9}(\lam_k)e^{k-1} R \leq T$, then we
let $m= \lceil k + \log R \rceil$, so that $e^k R \le e^m < e^{k+1} R$. 
Then by Proposition  \ref{p:tautail},
\begin{align} \nn
e^{kq}  P^0_\om(  e^{k-1} R \le  d(0, X_T) \le e^k R ) 
 &\le e^{kq} P^0_\om ( \tau_{e^{k-1} R} < T) \\ \nn
 &\le e^{kq}  C_{10}(\lam_m) \exp\left( - c_{11}(\lam_m) 
        \frac{e^{k-1}R}{G((e^{-k+1}T/R)^{1/2})} \right) \\ \nn
&\le e^{kq}  C m^{10p} \exp\left( - c m^{-10p} e^{k} \frac{R}{G((T/R)^{1/2})} \right) \\
 &\le C  (k + \log R)^{10p} \exp( - c (k + \log R)^{-10p} e^k + kq ). 
 \end{align}

 Let $k_1= 20p \log \log R$. Then if $k \ge k_1$,
$$ (k + \log R)^{10p} \le ( k +  e^{k/(20p)})^{10p} \le C e^{k/2}. $$
 Hence for    $k \ge k_1$,
\begin{align} \label{e:dsum2}
e^{kq}  P^0_\om(  e^{k-1} R \le  d(0, X_T) \le e^k R ) 
 &\le C \exp( - c e^{k/2} + C_q k ). 
 \end{align} 
Let $K' = K_0 \vee k_1$. Then
since the series given by \eqref{e:dsum1} and \eqref{e:dsum2} both converge, 
\begin{align}\nn 
 \sum_{k=1}^\infty e^{kq} P^0_\om(  e^{k-1} R \le  d(0, X_T) \le e^k R ) 
&\le  \sum_{k=1}^{K'-1} e^{kq}  +C_q  \\ \nn
&\le e^{K' q} + C_q \\ \nn
&\le e^{K_0 q } + (\log R)^{20 p q}  + C_q. 
\end{align}
Hence since $R \le T$, we have that for all $T \geq N_0 = e^{e^{K_0}}$ 
\begin{equation}\label{e:dub1}
  E^0_\om d(0, X_T)^q \le C_q R^q( (\log T)^q + (\log T )^{20 pq} ), 
\end{equation}
so that \eqref{e:qdub} holds. Taking expectations in \eqref{e:dub1}
and using \eqref{e:Ktail} gives \eqref{e:adub}.
\qed

\begin{remark}{\rm
It is natural to ask if \eqref{e:adub} holds without the term in $\log T$, as with the
averaged estimates in Theorem \ref{t:means}.
It seems likely that this is the case; such an averaged estimate was proved for
the incipient infinite cluster on regular trees in \cite[Theorem 1.4(a)]{BK06}. 
The key to obtaining such a bound is to control the exit times 
$\tau_{e^k R}$; this was done above using the events
$A(\lam, n)$, but this approach is far from optimal. 
The argument of Proposition \ref{p:tautail} goes through if 
only a positive proportion of the points $X_{T_k}$ are at places
where the estimate \eqref{e:epstau} can be applied.
This idea was used in \cite{BK06} -- see the definition of the event $G_2(N,R)$
on page 48.

Suppose we say that $B_d(x,R)$ is $\lam$-bad if $R \not\in J(x, \lam)$. Then
it is natural to conjecture that there exists $\lam_c$ such that for $\lam > \lam_c$ 
the bad balls fail to percolate on $\sU$.
Given such a result (and suitable control on the size of the
clusters of bad balls) it seems plausible that the methods of this paper and \cite{BK06} would then
lead to a bound of the form 
$\bE( E^0_\om d(0,X_T)^q) \le C_q f(T)^q$.
}
\end{remark}

\ms

We now use the arguments in \cite{BCK}
to obtain full heat kernel bounds for $p_T(x,y)$ and thereby prove Theorem \ref{t:hk}. Since the techniques 
are fairly standard, we only give full details for  the less familiar steps.

\begin{lemma} \label{lem:alam}
Suppose $A(\lam, n)$ holds.  Let
$x, y \in B(0,n)$. Then \\
(a)  
\begin{equation}\label{e:ptondub}
p_T(x,y) \le C_{12}(\lam) k(T)^{-1}, \q \hbox{ if } 1 \le T \le F(n).
\end{equation}
(b) 
\begin{equation}\label{e:ptndlb}
\wp_T(x,y) \ge c_{13}(\lam) k(T)^{-1}, \q \hbox{ if } 1 \le T \le F(n) \hbox{ and }  d(x,y) \le c_{14}(\lam) f(T).
\end{equation}
\end{lemma}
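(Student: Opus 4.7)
Both parts are standard on-/near-diagonal heat-kernel bounds derived from the conditions in $J(y,\lambda)$; the role of $A(\lambda,n)$ is solely to supply those conditions at every basepoint $y \in B(0,n)$ and every scale $R' \in [1,n^2]$ that will appear in the argument.

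For (a), it suffices to prove the on-diagonal bound $p_{2T}(y,y) \le C(\lambda)/k(T)$ at each $y \in B(0,n)$, since Cauchy--Schwarz applied to $p_{T/2}(x,\cdot)$ and $p_{T/2}(\cdot,y)$ yields
\[ p_T(x,y) \le \sqrt{p_T(x,x)\,p_T(y,y)} \]
(a trivial one-step shift handles odd $T$). I would choose $R = f(T)$, so $F(R) = T$ and $g(R)^2 = k(T)$; from $T \le F(n)$ we get $R \le n$, hence $R \in J(y,\lambda)$ supplies the upper volume bound $|B_d(y,R)| \le \lambda\, k(T)$ and the resistance lower bound $\Reff(y, B_d(y,R)^c) \ge R/\lambda$. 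These two ingredients give a Nash inequality on $B_d(y,R)$ in the standard way (the resistance bound is equivalent to a weighted Poincar\'e inequality of the right order), from which the Nash argument of \cite[Section~3]{BCK} produces the on-diagonal upper bound.

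For (b), I would first establish $\wp_{2T}(y,y) \ge c(\lambda)/k(T)$. Take $R = c_{14}(\lambda) f(T)$ with $c_{14}$ chosen so that all of $R$, $\eps R$ and $c_4(\lambda)\eps R$ lie in $[1,n^2]$ and $c_5(\lambda)F(\eps R) \ge T$; by Proposition \ref{p:kmtau}(b) this gives
\[ P^y_\omega\big(\tau(y,R) < T\big) \le P^y_\omega\big(\tau(y,R) < c_5(\lambda)F(\eps R)\big) \le C_6(\lambda)\eps \le \tfrac14 \]
for $\eps = \eps(\lambda)$ small enough. Then $P^y_\omega(X_T \in B_d(y,R)) \ge \tfrac34$, and Cauchy--Schwarz yields
\[ \tfrac{9}{16} \le \Big(\sum_{z \in B_d(y,R)} p_T(y,z)\mu_z\Big)^2 \le V(y,R)\cdot p_{2T}(y,y), \]
with $V(y,R) \le 4|B_d(y,R)| \le 4\lambda\, k(T)$ since the degrees in $\sU$ are at most $4$; the same reasoning at $T+1$ gives the bound for $\wp_{2T+2}$, so $\wp_{2T}(y,y) \ge c(\lambda)/k(T)$. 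The near-diagonal extension to $x$ with $d(x,y) \le c_{14}(\lambda)f(T)$ then follows by the standard parabolic-Harnack-type argument of \cite[Proposition~3.3 and Lemma~3.5]{BCK} (or \cite[Proposition~4.7]{KM}): combine the on-diagonal lower bound just proved, the volume estimates at both $y$ and $x$ from $A(\lambda,n)$, and the resistance-based Poincar\'e inequality on $B_d(y, 2R)$ to deduce that $\wp_T(x,y)$ is comparable to its spatial average over a smaller concentric ball. Parity issues are absorbed into $\wp_T = p_T + p_{T+1}$.

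The principal technical point is the bookkeeping of scales: every $R'$ that appears in an invocation of Proposition \ref{p:kmtau} must lie in $[1,n^2]$ at a basepoint in $B(0,n)$, so that $R' \in J(\cdot,\lambda)$ is guaranteed by $A(\lambda,n)$; I expect this to be the main place where care is needed, since the Harnack chaining in part (b) uses several auxiliary scales. It is nonetheless routine from $1 \le T \le F(n)$, $R \asymp f(T)$, and the monotonicity estimates \eqref{e:Fpower}--\eqref{e:fpower}, with the degenerate case of very small $T$ absorbed into the constants via trivial bounds such as $p_0(y,y) = 1/\mu_y \ge \tfrac14$.
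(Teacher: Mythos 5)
Your outline is essentially correct, but it re-derives material that the paper simply cites and, at the one non-trivial spot, takes a heavier and vaguer route than the paper's. For the on-diagonal bounds in (a) and (b) the paper does not run a Nash iteration or a Cauchy--Schwarz/exit-time argument from scratch: it cites \cite[Proposition~3.1]{KM} for the on-diagonal upper bound and \cite[Proposition~3.3(2)]{KM} for the on-diagonal lower bound, both of which are stated under precisely the hypotheses that $A(\lam,n)$ supplies; the only work done in the paper is the Cauchy--Schwarz step $p_T(x,y)^2\le \wp_T(x,x)\wp_T(y,y)$ for (a). Your re-derivations are correct (they are in fact the content of those KM propositions), just longer.

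The real difference is in the near-diagonal extension in (b). You invoke a ``parabolic-Harnack-type'' chaining and a resistance-based Poincar\'e comparison to a spatial average. The paper instead uses a far more elementary pointwise H\"older estimate for the heat kernel, again quoted from \cite[Proposition~3.1]{KM}:
\begin{equation*}
|\wp_T(x,y)-\wp_T(x,z)|^2 \le \frac{c}{T}\,d(y,z)\,p_{2\lfloor T/2\rfloor}(x,x),
\end{equation*}
which on a tree is just the resistance inequality $|f(y)-f(z)|^2\le \Reff(y,z)\,\sE(f,f)=d(y,z)\,\sE(f,f)$ combined with the standard spectral bound $\sE(p_T(x,\cdot),p_T(x,\cdot))\le cT^{-1}p_{2\lfloor T/2\rfloor}(x,x)$. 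Then $\wp_T(x,y)\ge \wp_T(x,x)-|\wp_T(x,x)-\wp_T(x,y)|$, and the constraint $d(x,y)\le c_{14}(\lam) f(T)$ is exactly what makes the correction term a fraction of the on-diagonal value, using $k(T)/T=f(T)^{-1}$. This avoids any Harnack/averaging machinery, which at this stage of the argument would be circular or at least need considerably more justification than you give; the tree structure makes the direct oscillation bound the natural and shortest route.
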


\proof 
If $x=y$ then (a) is immediate from \cite[Proposition 3.1]{KM}. Since
$p_T(x,y)^2 \le \wp_T(x,x) \wp_T(y,y)$, the general case then follows. \\
(b) The bound when $x=y$ is given by \cite[Proposition 3.3(2)]{KM}. We also
have, by \cite[Proposition 3.1]{KM},
\begin{equation*}
 |\wp_T(x,y) - \wp_T(x,z)|^2 \le \frac{c}{T} d(y,z) p_{2 \lfloor T/2 \rfloor}(x,x).
\end{equation*}
Therefore using (a),
\begin{align*}
 \wp_T(x,y) &\ge \wp_T(x,x) -  |\wp_T(x,x) - \wp_T(x,y)|  \\
 &\ge c(\lam) k(T)^{-1} - \Big( C(\lam) d(x,y) T^{-1} k(T)^{-1} \Big)^{1/2} \\
&= c(\lam) k(T)^{-1} \Big( 1-  \big( C(\lam) d(x,y) T^{-1} k(T) \big)^{1/2} \Big).
\end{align*}
Since $k(T)/T = f(T)^{-1}$, \eqref{e:ptndlb} follows. \qed

Recall that $\Phi(T,x,x) = 0$, and for $x \neq y$, 
$$ \Phi(T,x,y)= \frac{d(x,y)}{G((T/ d(x,y))^{1/2})}.$$

\begin{proposition} \label{p:ubfix}
Suppose that $A(\lam, n)$ holds. Let $x, y \in B(0,n)$. 
If $ d(x,y) \le T \le F(n)$, then
\begin{align}\label{e:hkb}
\frac{c(\lam)}{k(T)} \exp\Big( - C(\lam)  \Phi(T,x,y)   \Big) \le
 \wp_T(x,y) &\le \frac{C(\lam)}{k(T)} \exp\Big( - c(\lam)  \Phi(T,x,y)   \Big). 
\end{align}
\end{proposition}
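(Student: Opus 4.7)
The plan is to combine the on-diagonal bound of Lemma~\ref{lem:alam} with the exit-time tail of Proposition~\ref{p:tautail}, following the standard route for deriving sub-Gaussian heat-kernel bounds from volume and exit-time control.

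\emph{Upper bound.} If $d(x,y)\le c_{14}(\lam)\,f(T)$, a direct computation using Lemma~\ref{lem:mc} and the definition of $\Phi$ gives $\Phi(T,x,y)\le C(\lam)$, so the estimate follows from $\wp_T(x,y)\le (\wp_T(x,x)\,\wp_T(y,y))^{1/2}$ combined with Lemma~\ref{lem:alam}(a). Otherwise I would set $R=\lfloor d(x,y)/2\rfloor$ so that $y\notin B_d(x,R)$, and decompose $\{X_T=y\}$ according to whether $\tau^x_R\le T/2$ or $\tau^x_R>T/2$. On the first event, strong Markov at $\tau_R$ together with Lemma~\ref{lem:alam}(a) at the remaining time $T-\tau_R\ge T/2$ gives
\[
 P^x(X_T=y,\ \tau^x_R\le T/2) \le C(\lam)\,\frac{\mu_y}{k(T)}\,P^x(\tau^x_R\le T/2),
\]
and Proposition~\ref{p:tautail} bounds the final probability by $C(\lam)\exp(-c(\lam)\Phi(T,x,y))$. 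The complementary event $\{\tau^x_R>T/2,\ X_T=y\}$ is treated by reversibility: the identity $\mu_x\,P^x(X_T=y,A)=\mu_y\,P^y(X_T=x,\wt A)$ converts the late-exit condition into a late-entry condition for the reversed walk from $y$, which forces it to remain at distance $\ge d(x,y)-R\asymp d(x,y)$ from $x$ throughout $[0,T/2]$, so the same strong-Markov/exit-time argument, now centered at $y$, yields the same bound.

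\emph{Lower bound.} I would use a chaining argument. Choose $N=\lceil C_0(\lam)(\Phi(T,x,y)\vee 1)\rceil$, with $C_0(\lam)$ large enough that $d(x,y)/N\le c_{14}(\lam)\,f(T/N)/4$; via Lemma~\ref{lem:mc} this is equivalent to $F(d(x,y)/N)\le c\,T/N$, which is satisfied for $N\asymp \Phi(T,x,y)$. Pick interpolating points $z_0=x,z_1,\ldots,z_N=y$ along the tree path $\gam(x,y)$ with $d(z_{i-1},z_i)\le d(x,y)/N$, and apply Chapman--Kolmogorov $N$ times, restricting $w_i$ to the balls $B_d(z_i,c_{14}(\lam)\,f(T/N)/4)$:
\[
 p_T(x,y)\ge \sum_{w_1,\ldots,w_{N-1}} \mu_{w_1}\cdots\mu_{w_{N-1}}\,p_{T/N}(x,w_1)\cdots p_{T/N}(w_{N-1},y).
\]
Each transition density factor is $\ge c(\lam)/k(T/N)$ by Lemma~\ref{lem:alam}(b), while the $J$-set volume lower bound available on $A(\lam,n)$ gives $\sum_{w_i}\mu_{w_i}\ge c(\lam)\,k(T/N)$. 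Telescoping and using $k(T/N)\asymp N^{-8/13}\,k(T)$ to absorb the polynomial factor into the exponential yields $p_T(x,y)\ge (c(\lam)/k(T))\exp(-C(\lam)\,\Phi(T,x,y))$.

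\emph{Main obstacle.} The principal difficulty is the late-exit case $\tau^x_R>T/2$ in the upper bound: a naive application of strong Markov fails there because $T-\tau_R$ may be too small for Lemma~\ref{lem:alam}(a), and the fix via reversibility needs careful bookkeeping to confirm that the reversed event really does cost an exponential factor $\exp(-c(\lam)\Phi)$. A secondary issue is the parity of $p_t$, absorbed by running the argument at times $T$ and $T+1$ and summing to $\wp_T$.
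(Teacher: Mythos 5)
Your proposal follows essentially the same route as the paper: the upper bound comes from combining the on-diagonal bound of Lemma~\ref{lem:alam}(a) with the exit-time tail of Proposition~\ref{p:tautail} via the strong Markov/reversibility decomposition (the paper simply cites \cite[Proposition 3.8]{BCK} for this step, which you spell out), and the lower bound comes from a Chapman--Kolmogorov chain of $\asymp\Phi(T,x,y)$ near-diagonal steps using Lemma~\ref{lem:alam}(b), exactly as in the paper. One small omission: Proposition~\ref{p:tautail} requires $T\ge C_9(\lam)R$, so the regime $d(x,y)\le T\le C^*(\lam)d(x,y)$ must be handled separately --- the paper does this with the Carne--Varopoulos bound for the upper estimate and the trivial bound $p_T(x,y)\ge 4^{-T}$ (when $T+d(x,y)$ is even) for the lower; your case split ``$d(x,y)\le c_{14}(\lam)f(T)$ or not'' does not by itself dispose of this range, though the fix is routine.
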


\proof
Let $R = d(x,y)$. In this proof we take $c_{13}(\lam)$ and $c_{14}(\lam)$ to be as in \eqref{e:ptndlb}.

We will choose a constant $C^*(\lam)\ge 2$ later.
Suppose first that $R  \le T \le C^*(\lam)R$. Then the upper bound in \eqref{e:hkb}
is  immediate from the Carne-Varopoulos bound. If $R + T$ is even and
then we have $p_T(x,y) \ge 4^{-T}$, and this gives the lower bound.

We can therefore assume that $T \ge C^*(\lam)R$.
The upper bound follows from the bounds \eqref{e:ptondub} and
\eqref{e:tautail2} by the same argument as in \cite[Proposition 3.8]{BCK}. 

It remains to prove the lower bound in the case when $T \ge C^*(\lam)R$,
and for this we use a standard chaining technique which
derives \eqref{e:hkb} from the `near diagonal lower bound' \eqref{e:ptndlb}. 
For its use in a discrete setting see for example \cite[Section 3.3]{BCK}.
As in Lemma \ref{lem:mc}, we set
\begin{equation}\label{e:mchoose}
b_0 =   \frac{R}{G((T/R)^{1/2})}.
\end{equation}
If $b_0< 1$ then we have from Lemma \ref{lem:mc}  that
$R \le C_8 b_0^{2/3} f(T)$. 
If $C_8 b_0^{2/3} \le  c_{14}(\lam)$ then  
$R \le c_{14}(\lam) f(T)$ and the lower bound in \eqref{e:hkb} follows
from \eqref{e:ptndlb}. We can therefore assume that  
$C_8 b_0^{2/3} >   c_{14}(\lam)$. 
We will choose $\th>  2 (c_{14}/C_8)^{-3/2}$ later; this then implies that $\th b_0 \ge 2$.
Let $m = \lfloor \th b_0 \rfloor$;  we have $\half \th b_0 \le m \le \th b_0$. 
Let $r = R/m$, $t=T/m$; we will require that both $r$ and $t$ are greater than $4$. 
Choose integers $t_1, \dots, t_m$ so that
$|t_i - t| \le 2$ and $\sum t_i = T$. 
Choose a chain $x=z_0, z_1, \dots, z_m = y$ of points so that
$d(z_{i-1}, z_i) \le 2r$, and let $B_i = B(z_i, r)$. If $x_i \in B_i$ for $1\le i \le m$ then
$d(x_{i-1}, x_i) \le 4r$. We choose $\th$ so that we have
\begin{equation} \label{e:usend}
  \wp_{t_i}(x_{i-1},x_i) \ge c_{13}(\lam) k(t)^{-1} 
\hbox { whenever } x_{i-1} \in B_{i-1}, \, x_i \in B_i. 
\end{equation}
By \eqref{e:ptndlb} it is sufficient for this that 
\begin{equation} \label{e:rtcond}
 4R/m=  4r \le c_{14}(\lam) f(t/2) = c_{14}(\lam) f(T/2m).
\end{equation}
Since $2m/\th \ge b_0$, Lemma \ref{lem:mc} implies that
$f( \th T/(2m)) \ge \th R/(2m)$, and therefore
\begin{equation} \label{e:et4}
 4R/m \le 8 \th^{-1} f(\th T/(2m)) \le C \th^{-1/3} f(T/2m),
\end{equation}
and so taking $\th =\max(2 (c_{14}/C_8)^{-3/2}, (C/c_3(\lam))^3)$ gives \eqref{e:rtcond}.
The condition $T \ge C^*(\lam)R$ implies that 
$f(T/b_0) =  R/b_0 \ge G(C^*(\lam))$, so taking $C^*$ large enough ensures
that both $r$ and $t$ are greater than 4.

The Chapman-Kolmogorov equations give
\begin{align}  \nn 
 \wp_T(x,y) &\ge  \sum_{x_1 \in B_1} \dots  \sum_{x_{m-1} \in B_{m-1}}
  p_{t_1}(x_0,x_1)\mu_{x_1} p_{t_2}(x_1,x_2)\mu_{x_2} \dots \\ \label{e:cklb} 
   & \qq \qq p_{t_{m-1}}(x_{m-2}, x_{m-1})\mu_{x_{m-1}} \wp_{t_{m}}(x_{m-1},y).
 \end{align}
Since $x_{m-1} \in B_{m-1} $ we have
$\wp_{t_{m}}(x_{m-1},y) \ge c_{13}(\lam) k(t)^{-1} \ge c_{13}(\lam) k(T)^{-1}$.
Note that exactly one of $p_t(x,y)$ and $p_{t+1}(x,y)$ can be non-zero.
Using this, and \eqref{e:usend} we deduce that for $1\le i \le m-1$,
\begin{equation}
   \sum_{x_{i} \in B_{i}} p_{t_i}(x_{i-1}, x_i) \mu_{x_i} 
    \ge c(\lam) k(t)^{-1} g(r)^2. 
\end{equation}
The choice of $m$ implies that $c'(\lam) f(t) \le r \le c(\lam) f(t) $, and therefore
$$ k(t)^{-1} g(r)^2 = g(r)^2/g(f(t))^2 \ge c(\lam). $$
So we obtain
\begin{equation}
 \wp_T(x,y) \ge k(T) c(\lam)^m \ge k(T) \exp( - c(\lam) R / G((T/R)^{1/2} )).
\end{equation}
\qed

\sm {\bf Proof of Theorem \ref{t:hk} } 
As in the proof of Theorem \ref{t:dist}, we have that by
by \eqref{e:kmest}
\begin{equation*} 
 \bP( A(\lam,n)^c ) \le 4 n^3 e^{-c\lam^{1/9} } \le \exp( -c \lam^{1/9} + C \log n ). 
\end{equation*}
Therefore if we let $\lam_n = (\log n)^{18}$, then by Borel
Cantelli, for each $x \in \bZ^2$ there exists $N_x$ such that
$A_x(\lam_n,n)$ holds for all $n \ge N_x$. Further we have
that 
$$  \bP( N_x \ge n ) \le C e^{ -c (\log n)^2 }.$$ 

Let $x, y \in \bZ^2$ and $T \ge 1$. To apply the bound in Proposition \ref{p:ubfix}
we need to find $n$ such that 
$T \le F(n)$, $y \in B(x,n)$ and $n \ge N_x$. 
Hence if $F(T) \vee |x-y| \ge N_x$ we can take $n = F(T) \vee |x-y|$, to obtain
\eqref{e:hkb} with constants $c(\lam_n) = c (\log n)^p$. 
Choosing $\al$ suitably then gives \eqref{e:hkb2}. \qed

\ms
\begin{remark} {\rm
If both $d(x,y)=R$ and $T$ are large then since $d_w = 13/5$
$$ \Phi(x,y) \simeq   R ((T/R)^{1/2})^{-5/4} = \frac{R^{13/8}}{T^{5/8}} 
= (R^{d_w}/T)^{1/(d_w-1)}.$$
Thus the term in the exponent
takes the usual form one expects for heat kernel bounds on a regular graph with fractal growth
-- see the conditions UHK$(\beta)$ and  LHK$(\beta)$ on page 1644 of \cite{BCK}.
}
\end{remark}

\sm {\bf Acknowledgment} The first author would like to thank Adam
Timar for some valuable discussions on stationary trees in
$\bZ^d$. The second author would like to thank Greg Lawler for help in
proving Lemma \ref{condesc}.

\end{document}